\documentclass[a4paper, 11pt, twoside, leqno]{article}

\usepackage[T1]{fontenc}
\usepackage[utf8x]{inputenc}
\usepackage[english]{babel}
\usepackage{lmodern}               
\usepackage{amsmath,amsthm,amssymb}
\usepackage{mathrsfs,esint,bbm,stmaryrd}
\usepackage[shortlabels]{enumitem}
\usepackage{graphicx,xcolor,subcaption}
\usepackage[normalem]{ulem}
\usepackage[textwidth=16.1cm,centering,headheight=24pt]{geometry}
\usepackage{authblk}
\usepackage{tikz}
\usetikzlibrary{calc,intersections,through}
\usetikzlibrary{backgrounds}

\definecolor{lightblue}{RGB}{150,180,255}
\colorlet{textblue}{lightblue!35!blue!60!black}

\newtheorem{theorem}{Theorem}           

\newtheorem{lemma}[theorem]{Lemma}
\newtheorem{prop}[theorem]{Proposition}

\theoremstyle{definition}              

\theoremstyle{remark}                  
\newtheorem{step}{Step}

\newtheorem{remark}{Remark}

\DeclareMathOperator{\tr}{tr} 
\DeclareMathOperator{\dist}{dist} 
\DeclareMathOperator{\sign}{sign}

\let\div\relax
\DeclareMathOperator{\div}{div} 
\DeclareMathOperator{\BV}{BV}
\DeclareMathOperator{\SBV}{SBV}

\DeclareMathOperator{\loc}{loc}

\newcommand{\abs}[1]{\left| #1 \right|}
\newcommand{\norm}[1]{\left\| #1 \right\|}

\newcommand{\csubset}{\subset\!\subset}

\DeclareMathAlphabet{\mathpzc}{OT1}{pzc}{m}{it}

\newcommand{\J}{\mathrm{J}}
\renewcommand{\d}{\mathrm{d}}
\renewcommand{\o}{\mathrm{o}}
\newcommand{\R}{\mathbb{R}}
\newcommand{\Z}{\mathbb{Z}}
\newcommand{\C}{\mathbb{C}}
\renewcommand{\S}{\mathbb{S}}

\newcommand{\Q}{\mathbf{Q}}
\newcommand{\M}{\mathbf{M}}
\newcommand{\I}{\mathbf{I}}
\newcommand{\n}{\mathbf{n}}
\newcommand{\m}{\mathbf{m}}
\newcommand{\q}{\mathbf{q}}
\renewcommand{\u}{\mathbf{u}}
\newcommand{\NN}{\mathscr{N}}

\newcommand{\EE}{\mathscr{E}}
\renewcommand{\H}{\mathscr{H}}
\renewcommand{\L}{\mathscr{L}}

\newcommand{\F}{\mathscr{F}}

\newcommand{\eps}{\varepsilon}
\newcommand{\ess}{{\mathrm{ess}}}

\SetSymbolFont{stmry}{bold}{U}{stmry}{m}{n}  
\newcommand{\nnu}{{\boldsymbol{\nu}}}

\newcommand{\Sz}{\mathscr{S}_0^{2\times 2}}
\newcommand{\bd}{{\mathrm{bd}}}
\newcommand{\Qb}{\Q_{\bd}}

\definecolor{lightblue}{rgb}{0.22,0.45,0.70}   
\definecolor{darkgray}{gray}{0.4}    
\definecolor{lightgray}{gray}{0.8}

\title{Liftings of Sobolev maps into closed Riemannian manifolds via double coverings and minimal connections relative to planar sets, with an application to ferronematics}

\date{}
\author{Giacomo~Canevari, Federico~Luigi~Dipasquale, Bianca~Stroffolini}

\newcommand{\Addresses}{{
  \bigskip
  \footnotesize

  Giacomo~Canevari \\
  \textsc{Universit\`{a} di Verona} \\
  Strada Le Grazie 15, 37134 Verona, Italy \\
  \textit{E-mail address}: \texttt{giacomo.canevari@univr.it}

  \medskip

  Federico~Luigi~Dipasquale \\
  \textsc{Scuola Superiore Meridionale}\\
  Via Mezzocannone 4, 80138 Napoli, Italy\\
  \textit{E-mail address}: \texttt{f.dipasquale@ssmeridionale.it}

  \medskip

  Bianca~Stroffolini \\
  \textsc{Dipartimento di Matematica e Applicazioni ``Renato Caccioppoli'',}\\
  \textsc{Universit\`{a} degli studi di Napoli ``Federico II''}\\
  Via Cintia, Monte S. Angelo, 80126 Napoli, Italy\\ 
  \textit{E-mail address}: \texttt{bstroffo@unina.it}
}}

\begin{document}

\maketitle

\begin{abstract}
We consider Sobolev maps from a planar domain into a closed Riemannian manifold and their BV liftings via a double covering of the target. We establish a sharp lower bound on the jump length of the lifting, expressed in terms of a geometric quantity: the minimal connection, relative to the domain, of the non-orientable singularities. As an application, we analyse minimisers of a two-dimensional model of ferronematics under ``mixed'' boundary conditions --- that is, Dirichlet conditions for the liquid crystal order parameter and Neumann conditions for the magnetisation vector.

 \medskip
 \noindent
 \textbf{Keywords:}
 Lifting problem, minimal connections,
 topological singularities, Ginzburg-Lan\-dau functional, Allen-Cahn equation.

 \smallskip
 \noindent
 \textbf{2020 Mathematics Subject Classification:}
         35Q56 
 $\cdot$ 76A15 
 $\cdot$ 49Q15 
 $\cdot$ 26B30 
\end{abstract}

Let~$\NN$, $\EE$ be closed Riemannian manifolds and~$\Pi\colon\EE\to\NN$ a covering map. The lifting problem asks whether a given map $u\colon\Omega \to\NN$, defined in a domain~$\Omega\subseteq\R^n$, can be factorised as~$u = \Pi\circ v$ for some lifting~$v\colon\Omega\to\EE$ with comparable regularity. While the problem is classical for continuous maps on simply connected domains, its analysis becomes substantially more delicate in weaker regularity settings, such as Sobolev or BV spaces.
The most studied case is the covering map $\Pi\colon\R\to\S^1$ given by~$\Pi(\theta) := e^{i\theta}$, which arises naturally, e.g., in the Ginzburg--Landau theory of superconductors. The analysis of this case was initiated in \cite{BethuelZheng, BethuelDemengel} and later completed by Bourgain, Brezis, and Mironescu~\cite{BourgainBrezisMironescu2005}, who completely characterised the fractional Sobolev spaces~$W^{s,p}(\Omega, \, \S^1)$ in which the lifting problem  always has a positive answer (see also~\cite{BrezisMironescu} for a comprehensive discussion).

Subsequent contributions extended these results to Besov spaces~\cite{MironescuRussSire} and to more general target manifolds~\cite{BethuelChiron, BallZarnescu, Mucci-DCDS, MironescuVanSchaftingen}. Even in the familiar setting of $W^{1,p}$ spaces, not all $W^{1,p}$-maps admit a $W^{1,p}$-lifting — at least when~$1 < p < 2$; however, all maps of bounded variation admit a lifting of bounded variation.
For liftings through the exponential map~$\Pi\colon\R\to\S^1$, this fact was first proved by Giaquinta, Modica, and Sou\v{c}ek~\cite{GiaquintaModicaSoucek} and later revisited by Davila and Ignat~\cite{DavilaIgnat} and Merlet~\cite{Merlet}, using different proofs. For liftings through the quotient map~$\Pi\colon\S^d\to\R\mathrm{P}^d$, which defines real projective spaces, this was proved by Ignat and Lamy~\cite{IgnatLamy}, with earlier (partial) results by Bedford~\cite{Bedford}.
Generalisations to a wider class of covering maps~$\Pi\colon\EE\to\NN$ were given in~\cite{CO-lifting} and~\cite{ContiCrismaleGarroniMalusa}; see also~\cite{BellettiniMarzianiScala} for the existence of liftings in GSBV
of circle-valued maps.

In this paper, we restrict our attention to maps defined on planar domains, $\Omega\subseteq\R^2$, and focus on double coverings~$\Pi\colon\EE\to\NN$ (see Section~\ref{sect:lifting-prop} below for a definition). The prototypical example we have in mind is the double covering~$\Pi\colon\S^1\to\S^1$, given in complex notation by~$\Pi(z) := z^2$. This map arises naturally in the modelling of two-dimensional (nematic) liquid crystals, which are anisotropic fluids whose constituent molecules exhibit long-range orientational order while remaining able to flow.
Under suitable conditions ensuring in-plane molecular alignment, the state of a thin nematic layer can be represented by a map from a physical domain~$\Omega\subseteq\R^2$ to the set~$\NN$ of $2\times 2$ real symmetric matrices~$\Q$ satisfying~$\tr\Q := Q_{ii} = 0$ and~$\abs{\Q}^2 := Q_{ij} Q_{ij} = 1$. Physically, the eigenspace of~$\Q$ associated with its unique positive eigenvalue encodes the preferred (non-oriented) molecular direction at each point.
In this example, the set~$\NN$ is a smooth, compact manifold, diffeomorphic to the unit circle~$\S^1\subseteq\C$. A lifting, or an orientation, for~$\Q\colon\Omega\to\NN$
is a vector field~$\n\colon\Omega\to\S^1$ that satisfies
\begin{equation} \label{lifting}
 \Q(x) =\sqrt{2}\left(\n(x)\otimes\n(x) - \frac{\I}{2}\right)
\end{equation}
for a.e.~$x\in\Omega$. Such a vector field generates the eigenspace of~$\Q$ corresponding to the positive eigenvalue at almost every point.
From a mathematical viewpoint, and up to composition with a diffeomorphism~$\S^1\to\NN$, a lifting~$\M$ of~$\Q$ is a selection of a square root of~$\Q$.

More generally, given a double covering~$\Pi\colon\EE\to\NN$ between closed Riemannian manifolds and maps~$u\colon\Omega\to\NN$, $v\colon \Omega\to\EE$, we will say that~$v$
is a lifting of~$u$ via~$\Pi$ if and only if~$u(x) = \Pi(v(x))$ for a.e.~$x\in\Omega$.
The results in~\cite{GiaquintaModicaSoucek, DavilaIgnat, IgnatLamy, CO-lifting, ContiCrismaleGarroniMalusa} imply that each map~$u\in W^{1,1}(\Omega, \, \NN)$ admits a lifting~$v\in\SBV(\Omega, \, \EE)$, although not necessarily in~$W^{1,1}(\Omega, \, \EE)$. Our first result provides a lower bound for the length of the jump set of~$v$ in terms of the non-orientable singularities of~$u$ and the minimal connection between them.
Let~$a_1$, \ldots $a_p$ be distinct points in~$\Omega$.
We define a \emph{connection for
$\{a_1, \, \ldots, \, a_{p}\}$ relative to~$\Omega$}
as a finite collection of closed, non-degenerate
straight line segments $\{L_1, \, \ldots, \, L_q\}$
with the following properties:
\begin{enumerate}[label=(\roman*)]
 \item each~$L_j$ is contained in~$\overline{\Omega}$;
 \item for each~$j$, either~$L_j$ connects two of the
 points~$a_1, \, \ldots, \, a_p$ or~$L_j$ connects
 one of the points~$a_i$ with a point of~$\partial\Omega$;
 \item for each~$i$, there is an
 \emph{odd} number of indices~$j$ such
 that~$a_i$ is an endpoint of~$L_j$.
\end{enumerate}
An example is given in Figure~\ref{fig:connection}.
We define
\begin{equation} \label{minconnrel}
 \begin{split}
  \mathbb{L}^\Omega(a_1, \, \ldots, \, a_p)
  := \min\bigg\{
   \sum_{j = 1}^q \H^1(L_j)\colon
   &\{L_1, \, \ldots, \, L_q\} \textrm{ is a connection}\\[-3mm]
   &\textrm{for }
   \{a_1, \, \ldots, \, a_p\}
   \textit{ relative to } \Omega \bigg\} .
 \end{split}
\end{equation}
If~$\{L_1, \, \ldots, \, L_q\}$ is a minimiser for
the right-hand side of~\eqref{minconnrel},
we will say that~$\{L_1, \, \ldots, \, L_q\}$
is a minimal connection for~$\{a_1, \, \ldots, \, a_p\}$
relative to~$\Omega$.
This notion of minimal connection is reminiscent of the one introduced by Brezis, Coron, and Lieb~\cite{BrezisCoronLieb}, with the key difference that the minimal connection in~\cite{BrezisCoronLieb} is \emph{oriented}, whereas ours is not. In that setting, the points~$a_i$ are assigned the labels~$1$ or~$-1$, and line segments must connect a positive point with a negative one; in contrast, in our framework, all line segments joining points in~$a_i$ are admissible. A similar concept of non-oriented minimal connection (or minimal connection modulo two) was considered in~\cite{CanevariMajumdarStroffoliniWang}, although there, line segments joining a point in~$a_i$ with~$\partial\Omega$ were not allowed, a fact that in \cite{CanevariMajumdarStroffoliniWang} came as a consequence of the specific Dirichlet boundary conditions considered there, ensuring in particular that $p$ was even, coupled with a convexity assumption on $\Omega$. 

\begin{figure}[t]
 \centering
 \resizebox{.45\textheight}{!}{%
 \begin{tikzpicture}

\tikzset{
    point/.style={circle, fill=textblue, inner sep=0pt, minimum size=3pt},
    connection/.style={textblue, thick},
    potato/.style={fill=lightblue, thick}
}

\begin{scope}[on background layer]
    \draw[potato] 
        (0,0) node[above] {$\Omega$}
          .. controls (1,1) and (2,0.5) .. 
        (3,0.8) .. controls (4,1) and (4.5,0) .. 
        (4,-1) .. controls (3.5,-2) and (2.5,-1.8) .. 
        (1.5,-1.5) .. controls (0.5,-1.2) and (-0.5,-0.8) .. 
        cycle;
\end{scope}

\node[point, label=left: $a_1$] (p1) at (1,0) {};
\node[point, label=right:$a_5$] (p2) at (3,0) {};
\node[point, label=right:$a_3$] (p3) at (2,-1) {};
\node[point, label=left: $a_2$] (p4) at (0.8,-0.7) {};
\node[point, label=right:$a_6$] (p5) at (2,0.5) {};
\node[point, label=right:$a_4$] (p6) at (3.5,-0.5) {};

\draw[connection] (p1) -- (p5);
\draw[connection] (p1) -- (p4);
\draw[connection] (p1) -- (p6);
\draw[connection] (p2) -- (3, 0.8);
\draw[connection] (p3) -- (1.5,-1.5); 
\end{tikzpicture}
}
 \caption{Example of a connection for~$\{a_1 \, \ldots, a_6\}$
 relative to~$\Omega$.}
 \label{fig:connection}
\end{figure}
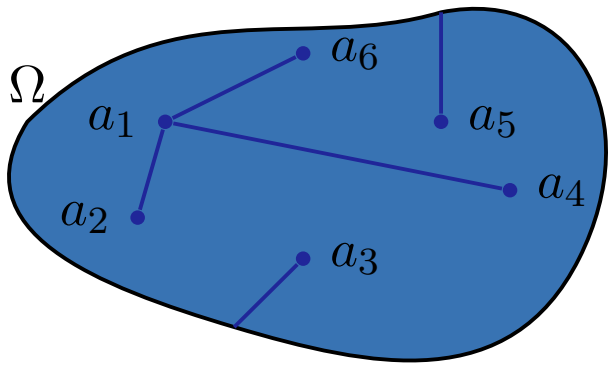

In the next result, we denote by~$\J_{v}$ the jump set of a map~$v\in\SBV(\Omega, \, \R^2)$, and we write $\H^1$ for the $1$-dimensional Hausdorff measure (i.e., length). Given two sets~$A, B \subseteq \R^2$, we write $A = B \mod \H^1$ if and only if $\H^1(A\setminus B) + \H^1(B\setminus A) = 0$. Our statement applies to maps $u\colon\Omega\to\NN$ which are locally~$W^{1,2}$ away from a finite set of points singularities, which may be orientable --- meaning, roughly speaking, that $u$ admits a Sobolev lifting in a neighbourhood of that singularity --- or not. For a more precise definition of orientable and non-orientable singularities, see Section~\ref{sect:lifting-prop}. However, only the non-orientable singularities play a r\^ole in the main estimate~\eqref{lowerboundSM}.

\begin{theorem} \label{th:minconnrel}
 Let~$\NN$, $\EE$ be closed, smooth Riemannian manifolds and~$\Pi\colon\EE\to\NN$
 a smooth double covering map.
 Let~$\Omega\subseteq\R^2$ be a bounded, simply connected domain
 of class~$C^2$ and let~$a_1$, \ldots, $a_p$,
 $b_1$, \ldots, $b_r$ be distinct points in~$\Omega$. Let
 \[
  u\in W^{1,1}(\Omega, \, \NN)\cap
  W^{1,2}_{\loc}(\Omega\setminus
  \{a_1, \, \ldots, a_p, \, b_1, \, \ldots, \, b_r\}, \, \NN)
 \]
 be a map with a non-orientable singularity at each~$a_j$
 and an orientable singularity at each~$b_h$.
 If~$v\in\SBV(\Omega, \, \EE)$ is a lifting
 for~$\EE$ via~$\Pi$, then
 \begin{equation} \label{lowerboundSM}
  \H^1(\J_{v}) \geq \mathbb{L}^\Omega(a_1, \, \ldots, \, a_p).
 \end{equation}
 The equality holds if and only if there exists a minimal
 connection~$\{L_1, \, \ldots, \, L_q\}$
 for~$\{a_1, \, \ldots, a_p\}$ relative to~$\Omega$ such that
 $\J_v = \bigcup_{j=1}^q L_j \mod\H^1$.
\end{theorem}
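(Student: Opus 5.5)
Our approach is the standard calibration/duality argument for minimal connections, adapted to the non-oriented, relative setting. We first reduce to a topological statement about the jump set. Since $v$ is a lifting of $u$ and $u$ is $W^{1,2}$ away from the singular points, $v$ is locally Sobolev away from its jump set. Across $\J_v$, the two traces $v^\pm$ are the two distinct preimages of the same point, that is, $v^+ = \sigma(v^-)$, where $\sigma$ is the nontrivial deck transformation. Consequently, for $\H^1$-a.e.\ small circle $\partial B_\rho(a_i)$ (for instance, by a slicing/coarea argument in the $\SBV$ setting), the restriction of $u$ is $W^{1,2}$, hence continuous, and does not lift continuously, because $a_i$ is non-orientable. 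The restriction of $v$ to the circle is an $\SBV$ lifting of a continuous loop. Between jumps it must coincide with one of the two continuous local lifts, and each jump switches the sheet. A non-liftable loop therefore forces an \emph{odd} number of intersections of the circle with $\J_v$. The same holds for a.e.\ simple closed curve in $\Omega$ avoiding the singularities that encloses an odd number of the $a_i$. Around each $b_h$, and around curves enclosing an even number of the $a_i$, the number of intersections is even. In other words, $\J_v$ is, modulo two, a chain whose boundary in $\Omega$ is $\sum_i a_i$, and $\partial\Omega$ is allowed to absorb boundary.

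Next we prove the lower bound by calibration. We use the dual formulation of the relative non-oriented minimal connection:
\[
\mathbb{L}^\Omega(a_1,\ldots,a_p) = \max\Big\{\sum_i \zeta(a_i)\colon \zeta\colon\overline\Omega\to\R \text{ 1-Lipschitz w.r.t. the geodesic distance},\ \zeta=0 \text{ on } \partial\Omega\Big\},
\]
with the mod-two twist handled by taking $\zeta\ge 0$, or by partitioning the points. We would prove this by a finite-dimensional linear-programming/matching argument, in the spirit of Brezis--Coron--Lieb: a minimal connection is a perfect matching among the $a_i$ together with the boundary. A pair is joined by a segment when the segment lies in $\overline\Omega$; in general geodesics are polygonal, and convexity is not assumed, so it must be checked that the polygonal lines with vertices on $\partial\Omega$ decompose into admissible segments. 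Given $\zeta$, define $\phi$ by $\nabla\phi \approx$ the rotated $\nabla\zeta$ and test against $\J_v$. More concretely, by the parity property, for a.e.\ level $t$ the set $\{\zeta>t\}$ contains the points $a_i$ with $\zeta(a_i)>t$. A curve argument applied to its boundary, combined with the coarea formula, gives
\[
\H^1(\J_v)\ \ge\ \int \#(\J_v\cap\ldots)\,dt\ \ge\ \sum_i\zeta(a_i).
\]
Alternatively, one can argue directly: the parity property implies that $\J_v\cup\partial\Omega$ contains, for each $a_i$, a path-connected set linking $a_i$ to another $a_j$ or to $\partial\Omega$ in a mod-two pairing, and a Steiner-type comparison then gives the bound. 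We expect to use the coarea/calibration version, since it is more robust for rectifiable sets.

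For the equality case, "if" is easy: a segment union realising the minimum produces a lifting whose jump set is exactly that union, so the bound is attained. For "only if", equality forces every inequality in the chain to be an equality for an optimal calibration $\zeta$. This forces $\J_v$ to be covered, up to $\H^1$-null sets, by level-set-transversal lines along which $|\nabla\zeta|=1$, i.e.\ by geodesics realising the optimal pairing. These are then shown to be straight segments of a minimal connection.

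We expect the main obstacle to be the first step, namely making the parity property rigorous for a mere $\SBV$ jump set on generic curves, including curves touching $\partial\Omega$. The second delicate point is the duality formula in the non-convex relative setting. For the parity step we would first try slicing by concentric circles and the lines $\{\zeta=t\}$, using the one-dimensional structure of $\SBV$ functions on a.e.\ slice.
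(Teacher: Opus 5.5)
\textbf{There is a genuine gap: the calibration/duality step fails.} No argument based on a single Lipschitz potential gives the sharp bound.

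Since $\zeta$ is $1$-Lipschitz and vanishes on $\partial\Omega$, you have $\zeta\le\dist(\cdot,\partial\Omega)$. So the maximum in your formula equals $\sum_i\dist(a_i,\partial\Omega)$, the cost of joining \emph{every} point to the boundary. This is in general much larger than $\mathbb{L}^\Omega$. Accordingly, your inequality $\int\#(\J_v\cap\{\zeta=t\})\,\d t\ge\sum_i\zeta(a_i)$ is false. Take two nearby points far from $\partial\Omega$ and the lifting that jumps only on the segment between them.

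What your parity property actually gives is weaker: $\#(\J_v\cap\{\zeta=t\})\ge1$ only for those $t$ such that an odd number of the $a_i$ lie in $\{\zeta>t\}$ (for $t>0$) or in $\{\zeta<t\}$ (for $t<0$). The coarea formula therefore yields only the measure of this set of levels, i.e.\ an alternating sum of the ordered values $\zeta(a_i)$. This bound is not sharp. Let $a_1,\dots,a_4$ be the vertices of a unit square and $\Omega$ a large disc around it. Then $\mathbb{L}^\Omega=2$. But the set of ``odd'' levels is contained in $[\min_i\zeta(a_i),\max_i\zeta(a_i)]$, an interval of length at most $\sqrt2$, whatever sign convention or constant boundary value you impose on $\zeta$.

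The reason is that the non-oriented problem is a non-bipartite matching problem. In the oriented Brezis--Coron--Lieb setting (a bipartite assignment problem), Kantorovich duality gives an exact calibration by one potential; the mod-two problem has no such exact dual. Your ``only if'' argument presupposes an optimal calibration, so it fails along with the inequality.

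\textbf{What is needed} is a structural argument. It must extract from $\J_v$ finitely many curves with pairwise $\H^1$-negligible overlaps, each joining two of the $a_i$ or some $a_i$ to $\partial\Omega$, with each $a_i$ an endpoint an odd number of times. Straightening these curves (and clipping them to $\overline\Omega$) then gives a connection of length at most $\H^1(\J_v)$. Your ``alternative'' sketch points this way, but it omits exactly the disjointness, which is the heart of the matter. The paper obtains it as follows.
\begin{enumerate}
 \item It fixes a reference lifting $v^\star$ whose jump set is a minimal connection $\bigcup_jL_j$.
 \item Using $w=\Xi(v,v^\star)$, it shows $\J_v=(\Omega\cap\partial^*A)\triangle\bigcup_jL_j$ mod $\H^1$ for a finite-perimeter set $A$. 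This also replaces your slicing-based parity step.
 \item It decomposes $\partial^*A$ into rectifiable Jordan curves via the Ambrosio--Caselles--Masnou--Morel theorem, and cuts these into arcs at $\partial\Omega\cup\bigcup_jL_j$.
 \item It removes the non-essential arcs by symmetric differences, strictly decreasing $\H^1(\L_A)$. Minimality of $\{L_j\}$ is used for arcs ending on a segment that touches $\partial\Omega$.
 \item It applies an Euler-trail decomposition to the finite multigraph formed by the essential arcs and the pieces of the $L_j$. The odd-degree vertices of this graph lie in $\{a_1,\dots,a_p\}\cup\partial\Omega$.
\end{enumerate}
The equality case then follows by tracking when each inequality in this chain is an equality.
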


Theorem~\ref{th:minconnrel} generalises Proposition~A.1 in~\cite{CanevariMajumdarStroffoliniWang}, which holds under the additional assumptions that~$\Omega$ is convex and~$v$ is continuous in a neighbourhood of~$\partial\Omega$. Heuristically, Theorem~\ref{th:minconnrel} follows from the topological observation that boundary points of the jump set~$\J_{v}$ of a lifting can only be non-orientable singularities of~$u$ or points in~$\partial\Omega$. However, defining precisely the ``boundary of the jump set~$\J_{v}$'' requires care, as the standard notion of topological boundary is inadequate and rectifiable sets, without additional structure, do not come with a natural boundary operator. A more suitable notion in this context is the boundary modulo two in the sense of currents, as used in~\cite{CanevariMajumdarStroffoliniWang}. Here, we avoid mentioning explicitly the theory of currents and instead rely on a result by Ambrosio, Caselles, Masnou, and Morel~\cite{ACMM} on the structure of planar sets of finite perimeter.
However, the arguments still rely implicitly on the homological structure of currents modulo two, as becomes most apparent in Lemma~\ref{lemma:boundary}.

In case~$\Pi$ is the double covering~$\S^1\to\S^1$, Theorem~\ref{th:minconnrel} may be of potential interest in some contexts related to two-dimensional modelling of  liquid crystals. Among the possible applications, we mention the discrete model for string defects in nematics studied in~\cite{Badal_et_al} and the Ginzburg--Landau model with topological free discontinuities in~\cite{GoldmanMerletMillot, BadalCicalese} (although the latter also applies to covering maps of arbitrary finite order).
Here, however, we focus on an application to \emph{ferronematics}.
Ferronematics are composite materials obtained by
suspending magnetic nanoparticles in a nematic liquid crystal host~\cite{Brochard,MLDC}.
We adopt the modelling approach of~\cite{bisht2019},
based on two order parameters.
The orientation of the liquid crystal molecules
is described by the Landau-de Gennes $\Q$-tensor,
which is a map from the physical domain~$\Omega\subseteq\R^2$
to the space~$\Sz$ of~$2\times 2$, symmetric, real matrices
with trace equal to zero.
Nonzero values of~$\Q$ correspond to configurations
with a well-defined molecular alignment,
while~$\Q = 0$ indicates an isotropic state,
where all the directions of molecular alignment are equally likely.
The distribution of magnetic nanoparticles is
described by the average magnetisation vector,
$\M\colon\Omega\to\R^2$.
The system is described by the free-energy functional
\begin{equation}\label{eq:Feps}
 \F_\eps(\Q, \, \M) := \int_G \left(\frac{1}{2} \abs{\nabla \Q}^2 + \frac{\eps}{2} \abs{\nabla \M}^2 + \frac{1}{\eps^2} f_\eps(\Q,\M)\right) \d x,
\end{equation}
where~$\eps$ is a non-dimensional parameter.
The interaction between the liquid crystal
host and the magnetic inclusions is encoded in the potential~$f_\eps$,
which takes the form
\begin{equation}\label{eq:potential}
 f_\eps(\Q, \, \M) := \frac{1}{4}\left(1-\abs{\Q}^2\right)^2
+ \frac{\eps}{4}\left(1-\abs{\M}^2\right)^2 - \eps \beta \, \Q \M \cdot \M + \kappa_\eps.
\end{equation}
Here~$\beta> 0$ is given and~$\kappa_\eps$ is an additive
constant, depending only on~$\eps$ and~$\beta$,
chosen so that~$\inf f_\eps = 0$.
Potential energies similar to~\eqref{eq:potential}
arise from suitable homogenisation limits~\cite{Caldereretal}.
For positive values of~$\beta$, the potential~$f_\eps$
favours alignment between the liquid crystal
molecules and the magnetisation vector.
Indeed, the potential~$f_\eps(\Q, , \M)$ is minimised
by pairs~$(\Q_{\eps}^{\mathrm{pot}}, \, \M_{\eps}^{\mathrm{pot}})$
that satisfy the conditions
\begin{equation} \label{minpot}
	|\M_{\eps}^{\mathrm{pot}}|
	= \lambda_{\eps,  \beta}, \qquad
	\Q_{\eps}^{\mathrm{pot}}
	= \sqrt{2} s_{\eps, \beta}\left( \frac{\M_{\eps}^{\mathrm{pot}} \otimes \M_{\eps}^{\mathrm{pot}}}{\lambda_{\eps,\beta}^2} - \frac{\I}{2}  \right)
\end{equation}
where~$\mathbf{I}$ is the~$2\times 2$ identity matrix
and~$\lambda_{\eps, \beta}$, $s_{\eps,\beta}$ are
positive constants, uniquely determined by~$\eps$ and~$\beta$,
such that
\[
 \lambda_{\eps,\beta} \to\left(\sqrt{2}\beta + 1\right)^{1/2},
 \qquad s_{\eps,\beta} \to 1
\]
as~$\eps\to 0$ (see~\cite[Lemma~B.2]{CanevariMajumdarStroffoliniWang}).
We study the limit as~$\eps\to 0$, which is physically
relevant in the ``large domain'' regime
(i.e., the size of the domain is much larger than
the typical correlation length for the liquid crystal molecules).
The asymptotic analysis of Ginzburg-Landau-type functionals shows that the energy of the~$\Q$-component concentrates at finitely many points,
which correspond to \emph{non-orientable} singularities
of the limiting liquid crystal configuration.
In particular, the $\Q$-component of minimisers
converges to a matrix-valued map that does not
admit a continuous orthonormal eigenframe.
However, as the coupling promotes alignment between~$\M$
and the eigenvectors of~$\Q$, the energy for the~$\M$-component
is expected to concentrate along singular lines, corresponding to jumps in the eigenvector frame.


The asymptotic analysis of minimisers for~$\F_\eps$,
subject to Dirichlet boundary conditions for
both~$\Q_\eps$ and~$\M_\eps$,
has been carried out in the paper~\cite{CanevariMajumdarStroffoliniWang}.
We will consider here mixed boundary conditions,
i.e. Dirichlet boundary conditions for~$\Q_\eps$
and homogeneous Neumann boundary
conditions for~$\M_\eps$:
\begin{equation} \label{bcbis}
 \Q_\eps = \Qb, \quad \partial_\nnu\M_\eps = 0
 \qquad \textrm{on } \partial\Omega,
\end{equation}
where~$\nnu$ is the exterior unit normal to~$\partial\Omega$.
We then assume the boundary datum~$\Qb$
(does not depend on~$\eps$ and) takes the form
\begin{equation} \label{hp:bcbis}
 \Qb = \sqrt{2}\left(\n_{\bd}\otimes\n_{\bd}
  - \frac{\mathbf{I}}{2}\right)
 \qquad \textrm{on } \partial\Omega,
\end{equation}
for some map~$\n_{\bd}\in C^2(\partial\Omega, \, \R^2)$
which, a priori, is completely independent of the
values of~$\M_\eps$ on the boundary.
We assume that the domain~$\Omega\subseteq\R^2$ is bounded,
simply connected and of class~$C^2$, so that
$\n_{\bd}\colon\partial\Omega\to\S^1$
has a well-defined topological degree, which we call~$d\in\mathbb{Z}$.

In the following, we denote by $(\Q^\star_\eps, \, \M^\star_\eps)$ a minimiser of the functional~\eqref{eq:Feps} subject to the boundary conditions~\eqref{bcbis}. Given distinct points~$a_1, \ldots, a_{2\abs{d}}$ in~$\Omega$, we write~$\mathbb{W}(a_1, \ldots, a_{2\abs{d}})$ for their Ginzburg--Landau renormalised energy, as defined by Bethuel, Brezis, and H\'elein~\cite{BBH}
(see Equation~\eqref{W} in Section~\ref{sect:minimisers} for the definition).

\begin{theorem} \label{th:ferronematics}
 Let~$\Omega\subseteq\R^2$ be a bounded, simply connected
 domain of class~$C^2$. Then, there exists a (non-relabelled) subsequence,
 maps~$\Q^\star\colon\Omega\to\NN$, $\M^\star\colon\Omega\to\R^2$
 and distinct points~$a^\star_1, \, \ldots, \, a^\star_{2\abs{d}}$
 in~$\Omega$ such that the following holds:
 \begin{enumerate}[label=(\roman*)]
  \item $\Q^\star_\eps\to\Q^\star$ strongly in~$W^{1,p}(\Omega)$
  for any~$p < 2$;
  \item $\M^\star_\eps\to\M^\star$ strongly in~$L^p(\Omega)$
  for any~$p<+\infty$;
  \item $\Q^\star$ has a non orientable singularity
  at each point~$a^\star_j$ and satisfies
  \[
   \partial_j \left(Q^\star_{11} \, \partial_j Q^\star_{12}
    - Q^\star_{12} \, \partial_j Q^\star_{11}\right) = 0
  \]
  in the sense of distributions in~$\Omega$;
  \item $\M^\star\in\SBV(\Omega, \, \R^2)$, it satisfies
  $\abs{\M^\star} = (\sqrt{2}\beta + 1)^{1/2}$ a.e.~in~$\Omega$,
  and~$(\sqrt{2}\beta + 1)^{-1/2} \M^\star$ is a lifting of~$\Q^\star$;
  \item there exists a minimal connection~$(L_1, \, \ldots, \, L_{\abs{d}})$
  for~$(a^\star_1, \, \ldots, \, a^\star_{2\abs{d}})$
  relative to~$\Omega$ such that
  $\J_{\M^\star} = \bigcup_{j=1}^q L_j \mod\H^1$;
  \item $(a^\star_1, \, \ldots, \, a^\star_{2\abs{d}})$
  minimises the function
  \[
   \mathbb{W}_\beta^{\Omega}(a_1, \, \ldots, \, a_{2\abs{d}}) :=
    \mathbb{W}(a_1, \, \ldots, \, a_{2\abs{d}})
    + \frac{2\sqrt{2}}{3} \left(\sqrt{2}\beta + 1\right)^{3/2}
     \mathbb{L}^{\Omega}(a_1, \, \ldots, \, a_{2\abs{d}})
  \]
  among all the $(2\abs{d})$-uples~$(a_1, \, \ldots, \, a_{2\abs{d}})$
  of distinct points in~$\Omega$.
 \end{enumerate}
\end{theorem}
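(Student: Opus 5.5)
The plan is a $\Gamma$-convergence-type argument: an upper bound by an explicit construction, and a matching lower bound that splits the energy into a Ginzburg--Landau part for~$\Q_\eps$ and a jump part for~$\M_\eps$. Theorem~\ref{th:minconnrel} (with~$\Pi\colon\S^1\to\S^1$, $z\mapsto z^2$, composed with the identification~$\NN\simeq\S^1$) is used to identify the jump part.

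\emph{Upper bound.} Fix distinct points~$a_1,\dots,a_{2\abs{d}}$ and a minimal connection $\{L_1,\dots,L_q\}$ relative to~$\Omega$.
\begin{itemize}
 \item For~$\Q_\eps$, take the Bethuel--Brezis--H\'elein optimal profile with degree-$\tfrac12$ vortices at the~$a_j$, matching~$\Qb$.
 \item For~$\M_\eps$, take $\lambda_{\eps,\beta}$ times a lifting of that profile. This lifting is smooth off~$\bigcup L_j$, since crossing each segment an odd number of times flips orientation, and its boundary values are free because of the Neumann condition. Across each segment, insert the one-dimensional heteroclinic transition of width~$\eps$.
\end{itemize}
This should give
\[
\F_\eps \le 2\abs{d}\,\tfrac{\pi}{2}\abs{\log\eps} + \mathbb{W}(a) + \tfrac{2\sqrt2}{3}\bigl(\sqrt2\beta+1\bigr)^{3/2}\,\mathbb{L}^\Omega(a) + C + \o(1),
\]
where~$C$ is a universal core constant. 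The constant~$\tfrac{2\sqrt2}{3}(\sqrt2\beta+1)^{3/2}$ comes from the one-dimensional Modica--Mortola cost $\int\sqrt{2W}$ with $W(m)=\tfrac14(\lambda^2-m^2)^2$ approximately. Here one must check that the coupling term restricted to $\M=m\,\n$ reduces to this double well with $\lambda^2=\sqrt2\beta+1$.

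\emph{Compactness and lower bound.} From the upper bound and the fact that $f_\eps\ge0$:
\begin{itemize}
 \item The~$\Q_\eps$ part satisfies the standard Ginzburg--Landau bound. Jerrard/Sandier ball construction, together with the degree~$d$ of~$\n_\bd$ (which is $2d$ half-vortices for~$\Q$), yields exactly $2\abs{d}$ degree-$\pm\tfrac12$ limit singularities, strong $W^{1,p}$ convergence for~$p<2$, and $W^{1,2}_{\loc}$ away from them. This gives~(i) and the degree part of~(iii).
 \item For~$\M_\eps$, the term $\eps\abs{\nabla\M}^2/2+\eps^{-1}(\ldots)$ is of Modica--Mortola type. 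I would set $\phi_\eps:=\Phi(\M_\eps\cdot \n_\eps)$, with~$\n_\eps$ a local lifting of~$\Q_\eps$ away from the cores, or use the projection~$\Q_\eps\M_\eps\cdot\M_\eps$ suitably. BV compactness then gives~(ii) and $\M^\star\in\SBV$ with constant modulus, aligned with~$\Q^\star$, which is~(iv).
 \item The $\liminf$ of the~$\M$-energy is at least $\tfrac{2\sqrt2}{3}(\sqrt2\beta+1)^{3/2}\H^1(\J_{\M^\star})$. Applying Theorem~\ref{th:minconnrel} to the normalised lifting $\M^\star/\abs{\M^\star}$ bounds this below by the same constant times~$\mathbb{L}^\Omega(a^\star)$. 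The~$\Q$ part gives $\mathbb{W}(a^\star)+C$ via the BBH/Lefter--R\u{a}dulescu lower bound.
\end{itemize}

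\emph{Conclusion.} Comparing the lower bound with the upper bound for arbitrary~$a$ shows that~$a^\star$ minimises~$\mathbb{W}^\Omega_\beta$, which is~(vi). Equality must then hold in~\eqref{lowerboundSM}, and the equality case of Theorem~\ref{th:minconnrel} yields~(v). The harmonic-map equation in~(iii) follows from minimality: $\Q^\star$ minimises the Dirichlet energy among locally orientable competitors off~$a^\star$, since the jump cost depends only on~$a^\star$. Equivalently, take the variational equation for~$\Q_\eps$, wedge it with~$\Q_\eps$ so that the coupling term becomes $\o(1)$ in~$L^1$, and pass to the limit.

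\emph{Main obstacle.} The hardest step is decoupling the two lower bounds. Near the vortex cores,~$\Q_\eps$ has no lifting and the~$\M$-energy could in principle interact with the logarithmic energy. Also, the coupling term $-\eps\beta\Q\M\cdot\M$ is of the same order~$\eps^{-1}$ as the~$\M$ double well, so one must show that the~$\M$-energy localises on the jump set with the sharp constant.

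I would first excise balls of radius~$\eps^{\gamma}$ around the cores, whose contribution to~$\H^1(\J)$ vanishes. Outside them,~$\abs{\Q_\eps}$ is close to~$1$ and $f_\eps$ is bounded below by a double well in $\M\cdot\n_\eps$ up to $\o(1)$ errors. I would then apply the coarea/Modica--Mortola inequality along the transverse direction.
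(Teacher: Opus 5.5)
Your architecture matches the paper's: a recovery sequence from Lemma~\ref{lemma:goodlifting} plus a transversal profile, Ginzburg--Landau compactness for $\Q_\eps$, Modica--Mortola compactness for the magnetisation, and then Theorem~\ref{th:minconnrel} and its equality case after comparing the two bounds. What is missing are the estimates that make this comparison sharp.

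\emph{A priori bounds.} ``The upper bound and $f_\eps\geq 0$'' is not enough to get started. Since $f_\eps$ contains the coupling $-\eps\beta\,\Q\M\cdot\M$, its nonnegativity alone does not control $\eps^{-2}\int_\Omega(\abs{\Q_\eps}^2-1)^2$. Moreover, the $\M$-part of the energy is a priori only $O(\abs{\log\eps})$, whereas Modica--Mortola compactness needs an $\eps$-independent bound. The paper supplies two steps you skip.
\begin{itemize}
 \item Lemma~\ref{lemma:max} bounds $\norm{\M_\eps}_{L^\infty(\Omega)}$ uniformly by the maximum principle, using Hopf's lemma at boundary maxima of $\abs{\M_\eps}^2$; this is where $\partial_\nnu\M_\eps=0$ is really used. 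Combined with \eqref{potential_comparison}, it yields the Ginzburg--Landau bound \eqref{mainmin2}.
 \item Lemma~\ref{lemma:potboundmin} obtains \eqref{mainmin3} from \cite{DelPinoFelmer}. It then subtracts the lower bound $\frac12\int_\Omega\abs{\nabla\Q_\eps}^2\geq 2\pi\abs{d}\abs{\log\eps}-C$ of \cite{Sandier, Jerrard} from the upper bound, which gives \eqref{potboundmin}.
\end{itemize}
Without the $L^\infty$ bound, the energy controls only low $L^p$ norms of $\M_\eps$, so $L^1$ compactness does not upgrade to (ii) for every $p<+\infty$. Your minimality argument for the equation in (iii) is fine. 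Your ``wedge'' alternative is not right as stated: the coupling term is not known to be $o(1)$ in $L^1$. What works is the conservation law $\div(\q_\eps\wedge\nabla\q_\eps+\frac{\eps}{2}\M_\eps\wedge\nabla\M_\eps)=0$, obtained by combining \eqref{EL-Q} and \eqref{EL-M}, with $\q_\eps$ as in \eqref{eq:small-q}. Its second term is $O(\eps^{1/2})$ in $L^1$ by the two estimates above. Finally, the leading term of the upper bound is $2\pi\abs{d}\abs{\log\eps}$, not $2\abs{d}\cdot\frac{\pi}{2}\abs{\log\eps}$: each half-vortex of $\Q$ is a degree-one vortex of $\q=\n^2$.

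\emph{Decoupling.} Your remedy does not give $o(1)$ errors. Take $\u$ as in \eqref{eq:def-u} and $\lambda:=(\sqrt2\beta+1)^{1/2}$. Then
\[
 \frac{f_\eps(\Q,\M)}{\eps^2} = \frac{h(\u)}{\eps} + \frac{(\abs{\Q}^2-1)^2}{4\eps^2} - \frac{\beta}{\sqrt2\,\eps}(\abs{\Q}-1)(u_1^2-u_2^2) + \frac{\kappa_\eps}{\eps^2} - \frac{\beta^2+\sqrt2\beta}{2\eps}.
\]
Here $\abs{\Q}-1$ is naturally of order $\eps$; already $s_{\eps,\beta}-1$ is. So the cross term has density of order one near the wells. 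Knowing only $\abs{\Q_\eps}\approx 1$ therefore leaves an $O(1)$ error, the same order as $\mathbb{W}+c_\beta\mathbb{L}^\Omega$, which is what you must compare. In \eqref{eq:Feps-decoupling}, the value of the cross term at the wells, $-\frac{2\kappa_*}{\eps}(\abs{\Q}-1)$ with $2\kappa_*=\beta\lambda^2/\sqrt2$, is absorbed together with the $\Q$-potential and the constant into $g_\eps\geq 0$. The leftover $\frac{\beta}{\sqrt2\,\eps}(\abs{\Q}-1)(u_1^2-u_2^2-\lambda^2)$ is $O(\eps^{1/2})$ in $L^1$ by Cauchy--Schwarz, using \eqref{mainmin3}, \eqref{potboundmin} and the $L^\infty$ bound.

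\emph{The scalar reduction and the constant.} Projecting onto $u_1=\M\cdot\n_\eps$ needs $h(\u)\geq\frac14(u_1^2-\lambda^2)^2$. Since $h(\u)=\frac14(\abs{\u}^2-\lambda^2)^2+\sqrt2\beta u_2^2$, this holds only for $\beta\geq 1/\sqrt2$. Checking the restriction to $\M=m\n$ only bounds the transition cost from above. For instance, the half-turn $\u=\lambda(\cos\theta,\sin\theta)$, $0\leq\theta\leq\pi$, costs $2^{7/4}\lambda^2\sqrt\beta$, which is below $\frac{2\sqrt2}{3}\lambda^3$ for $\beta<\sqrt2/16$. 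The paper instead applies vectorial Modica--Mortola compactness to $\u$ \cite{Baldo, FonsecaTartar}, whose sharp constant is the infimum in \eqref{c_beta}. The same half-turn computation shows that the closed form asserted in \eqref{c_beta} cannot hold for $\beta<\sqrt2/16$.
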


Theorem~\ref{th:ferronematics} is a variant of Theorem~1 in \cite{CanevariMajumdarStroffoliniWang}.  
More generally, items~{(i)}, {(ii)}, {(iv)}, and a slightly weaker variant of item~{(iii)} 
of Theorem~\ref{th:ferronematics} are true for any sequence of critical points with potential energy 
equibounded with respect to $\eps$, under both pure Dirichlet boundary conditions as in \cite{CanevariMajumdarStroffoliniWang} and `mixed' boundary conditions as in~\eqref{bcbis}, \eqref{hp:bcbis}. 
This comes as an outcome of the PDE analysis carried out in the companion paper~\cite{CDS}, 
which leads also to improved convergence results for both $\Q$ and $\M$ and to the characterisation 
of the singular set of the $\M$-part in terms of $\H^1$-rectifiable varifolds with generalised curvature 
supported on the singular set of the $\Q$-part, the latter being a finite set of points even 
in this more general context.
We further note that point defects and the line defects connecting them also appear in other variational models, such as~\cite{Badal_et_al, GoldmanMerletMillot, BadalCicalese}. However, the mathematical nature of these problems differs: in our case, point and line defects emerge from a nontrivial coupling of two order parameters, whereas in~\cite{GoldmanMerletMillot, BadalCicalese} they arise from a free discontinuity problem with a topological constraint on the jumps, and in~\cite{Badal_et_al} from a discrete variational problem with a specific structure.

The rest of the paper is organised as follows. Section~\ref{sect:lifting} is devoted to the proof of Theorem~\ref{th:minconnrel}.
More precisely, after recalling some preliminary material on covering maps and Sobolev liftings (Subsection~\ref{sect:lifting-prop}) and proving a few key properties of minimal connections (Subsection~\ref{sect:minrelconn}), we reformulate Theorem~\ref{th:minconnrel} as a statement on sets of finite perimeter, namely Proposition~\ref{prop:minconn-finiteper} in Subsection~\ref{sect:lifting-reformulate}. Along the way, we recall definitions and properties of sets of finite perimeter, as needed. In Subsection~\ref{sect:minconn-finiteper}, we prove Proposition~\ref{prop:minconn-finiteper} using the results of~\cite{ACMM}. Section~\ref{sect:minimisers} is devoted to the proof of Theorem~\ref{th:ferronematics}.

\setcounter{equation}{0}
\numberwithin{equation}{section}
\numberwithin{definition}{section}
\numberwithin{theorem}{section}
\numberwithin{remark}{section}
\numberwithin{example}{section}

\section{Liftings and relative minimal connections}
\label{sect:lifting}


\subsection{Double coverings of Riemannian manifolds}
\label{sect:lifting-prop}

{First, let us recall the definition and basic properties
of (double) covering maps. We refer the reader to,
e.g., \cite{Hatcher} for more details.
Let~$\NN$, $\EE$ be two closed (i.e., compact and without boundary)
smooth Riemannian manifolds. A smooth map~$\Pi\colon\EE\to\NN$
is called a double covering of~$\NN$
if and only if for any~$w\in\NN$ there exists
an open neighbourhood~$V$ of~$w$ and two disjoint
open sets~$U_1$, $U_2\subseteq\EE$
such that~$\Pi^{-1}(V) = U_1 \cup U_2$ and the restrictions
$\Pi_{|U_1}\colon U_1\to V$, $\Pi_{|U_2}\colon U_2\to V$ are
isometric diffeomorphisms. It follows from this definition
that the inverse image~$\Pi^{-1}(w)$ of any point~$w\in\NN$
contains exactly two point of~$\EE$.

\begin{remark} \label{rk:uniform}
 Since~$\NN$ is compact, it is possible
 to choose~$V$, $U_1$, $U_2$ in the definition of covering
 to be geodesic balls with uniform radius. More precisely,
 let~$B_0 := B_{\delta_0}^{\NN}(w)$ be the geodesic open ball in~$\NN$
 of centre~$w$ and radius~$\delta_0$.
 There exists~$\delta_0 > 0$ small enough that,
 for any~$w\in\NN$, the inverse image~$\Pi^{-1}(B_0)$
 consists exactly of two disjoint geodesic balls in~$\EE$,
 say~$B_1 = B_{\delta_0}^{\EE}(z_1)$ and~$B_2 := B_{\delta_0}^{\EE}(z_2)$,
 and the restrictions~$\Pi_{|B_1}\colon B_1\to B_0$,
 $\Pi_{|B_2}\colon B_2\to B_0$ are isometric diffeomorphisms.
 This follows from the definition of double covering,
 reasoning e.g.~by contradiction and using the compactness of~$\NN$.
\end{remark}

The prototypical example of a double covering is the
natural quotient map $\Pi\colon\S^{n}\to\R\mathrm{P}^n$,
whose restriction to any open set~$U\subseteq\S^n$ strictly
contained in a half-sphere is a one-to-one isometry.
When~$n = 1$, the real projective line~$\R\mathrm{P}^1$
is (isometrically) diffeomorphic to the unit circle and,
up to composition with this diffeomorphism,
$\Pi$ can be identified with the map
defined by~$\Pi(z) := z^2$ for~$z\in\S^1\subseteq\C$.

For any double covering~$\Pi\colon\EE\to\NN$, there exists a
unique isometric diffeomorphism~$\Phi\colon\EE\to\EE$ such that
\begin{equation} \label{Phi}
 \Pi(\Phi(z)) = z, \qquad \Phi(z)\neq z \qquad \textrm{for all } z\in\EE.
\end{equation}
(For instance, in case~$\Pi\colon\S^n\to\R\mathrm{P}^n$,
$\Phi\colon\S^n\to\S^n$ is the antipodal map, $\Phi(z) := -z$.)
For each~$z\in\EE$, $\Phi(z)$ is defined as the unique
point~$y\in\EE$ such that~$y\neq z$ and~$\Pi(y) = \Pi(z)$.
The definition of~$\Pi$ implies that~$\Phi$ is a local isometry
and that~$\Phi(\Phi(z)) = z$ for each~$z\in\EE$, so 
$\Phi$ is indeed a (isometric, involutive) diffeomorphism.
For technical reasons, at some points of the proof
we will also need an auxiliary function, which we introduce 
in the following lemma.

\begin{lemma} \label{lemma:Xi}
 There exists a Lipschitz function
 $\Xi\colon\EE\times\EE\to\R$ that satisfies
 the following properties for any~$z_1\in\EE$, $z_2\in\EE$:
 \begin{align}
  \Xi(z_1, \, z_2) = 1 \qquad &\textrm{if and only if }
    z_1 = z_2 \label{Xi} \\
  \Xi(z_1, \, z_2) = -1 \qquad &\textrm{if and only if }
    z_1 = \Phi(z_2) \label{Xibis} \\
  \Xi(z_1, \, z_2) &= \Xi(\Phi(z_1), \, \Phi(z_2)). \label{Xitris}
 \end{align}
\end{lemma}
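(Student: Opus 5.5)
We build $\Xi$ from a distance-type quantity that tells apart the two preimages of a point. Write $d_\EE$ for the geodesic distance on $\EE$, which is $1$-Lipschitz in each variable. Since $\Phi$ is a fixed-point-free isometry of the compact manifold $\EE$, the quantity
\[
 \eta := \min_{z\in\EE} d_\EE(z, \, \Phi(z))
\]
is strictly positive. Indeed, the map $z\mapsto d_\EE(z,\Phi(z))$ is continuous and never zero, so by compactness it attains a positive minimum.

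We then set
\[
 \Xi(z_1, \, z_2) := \frac{d_\EE(z_1, \, \Phi(z_2)) - d_\EE(z_1, \, z_2)}{d_\EE(z_1, \, \Phi(z_2)) + d_\EE(z_1, \, z_2)} .
\]
The denominator never vanishes. If it did, we would have $z_1 = z_2$ and $z_1 = \Phi(z_2)$ at the same time, which is impossible. By the triangle inequality the denominator is in fact at least $d_\EE(z_2,\Phi(z_2))\geq\eta$. So $\Xi$ is a quotient of Lipschitz functions whose denominator is bounded below by $\eta>0$, and both functions are bounded because $\EE$ is compact. Hence $\Xi$ is Lipschitz on $\EE\times\EE$, with the product metric.

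Next we check \eqref{Xi} and \eqref{Xibis}. Put $A := d_\EE(z_1,\Phi(z_2))$ and $B := d_\EE(z_1,z_2)$, so $\Xi = (A-B)/(A+B)$ with $A+B>0$. Then $\Xi = 1$ exactly when $B = 0$, that is, when $z_1 = z_2$. Likewise $\Xi = -1$ exactly when $A = 0$, that is, when $z_1 = \Phi(z_2)$.

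For \eqref{Xitris} we use that $\Phi$ is an isometric involution. This gives $d_\EE(\Phi(z_1),\Phi(\Phi(z_2))) = d_\EE(\Phi(z_1),z_2) = d_\EE(z_1,\Phi(z_2)) = A$. It also gives $d_\EE(\Phi(z_1),\Phi(z_2)) = d_\EE(z_1,z_2) = B$. Therefore $\Xi(\Phi(z_1),\Phi(z_2))$ has the same numerator $A-B$ and the same denominator $A+B$ as $\Xi(z_1,z_2)$, so the two values coincide.

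The only point needing care is the positive lower bound on the denominator, which is where compactness and the absence of fixed points of $\Phi$ enter. Everything else is a direct check from the isometry and involution properties of $\Phi$ established above.
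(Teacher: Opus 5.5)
Your proof is correct, and it takes a genuinely different route from the paper's. The paper builds $\Xi$ locally from the covering structure. By Remark~\ref{rk:uniform}, for small $\delta_0$ the compact set $K=\{(z_1,z_2)\colon \dist_{\NN}(\Pi(z_1),\Pi(z_2))\le\delta_0\}$ splits into the disjoint pieces $K_+=\{\dist_{\EE}(z_1,z_2)\le\delta_0\}$ and $K_-=\{\dist_{\EE}(\Phi(z_1),z_2)\le\delta_0\}$. The paper then sets $\Xi=\sigma\,\bigl(1-\dist_{\NN}(\Pi(z_1),\Pi(z_2))/\delta_0\bigr)_+$ with $\sigma=\pm1$ on $K_\pm$, and leaves both the decomposition of $K$ and the check of the properties to the reader.

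Your global formula $(A-B)/(A+B)$ avoids the local triviality of $\Pi$ entirely. It only uses that $\Phi$ is a fixed-point-free isometric involution of a compact metric space. The triangle inequality gives $A+B\ge\eta$ in one line, \eqref{Xi}--\eqref{Xitris} are immediate, and you get an explicit Lipschitz constant in terms of $\eta$ and the diameter of $\EE$.

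The paper's $\Xi$ has two extra features: $\abs{\Xi}$ depends only on $\dist_{\NN}(\Pi(z_1),\Pi(z_2))$, and $\Xi$ vanishes away from $\{\Pi(z_1)=\Pi(z_2)\}$. The later uses (Lemmas~\ref{lemma:uniqueness} and~\ref{lemma:liftbd}) need only \eqref{Xi}--\eqref{Xitris} and Lipschitz continuity, so nothing is lost with your construction.

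One minor caveat: your formula evaluates $d_{\EE}$ at arbitrary pairs, so it tacitly needs $d_{\EE}$ to be finite, i.e.\ $\EE$ connected. That assumption is already implicit in the paper's \eqref{geodistE}, whereas the paper's cutoff only looks at short distances. To drop it, replace $d_{\EE}$ by the $\Phi$-invariant metric $\min\{d_{\EE},1\}$; the argument then goes through verbatim with $\eta$ replaced by $\min\{\eta,1\}$.
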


For instance, when~$\Pi$ is the quotient map~$\S^n\to\R\mathrm{P}^n$,
one such function~$\Xi\colon\S^n\to\S^n$ is simply given
by~$\Xi(z_1, \, z_2) := z_1\cdot z_2$.

\begin{proof}[Proof of Lemma~\ref{lemma:Xi}]
 For a sufficiently small~$\delta_0 > 0$,
 the compact set~$K := \{(z_1, \, z_2)\in\EE\times\EE\colon
 \dist_{\NN}(\Pi(z_1), \, \Pi(z_2))\leq \delta_0\}$
 has two connected components:
 one is~$K_+ := \{(z_1, \, z_2)\in\EE\times\EE
 \colon \dist_{\EE}(z_1, \, z_2) \leq \delta_0\}$,
 the other one is
 $K_- := \{(z_1, \, z_2)\in\EE\times\EE
 \colon \dist_{\EE}(\Phi(z_1), \, z_2) \leq \delta_0\}$.
 (This is a consequence of Remark~\ref{rk:uniform}.)
 Let~$\sigma(z_1, \, z_2) := 1$ for~$(z_1, \, z_2)\in K_+$
 and $\sigma(z_1, \, z_2) := -1$ for~$(z_1, \, z_2)\in K_-$.
 We define
 \[
  \Xi(z_1, \, z_2) := \sigma(z_1, \, z_2) \,
  \left(1 - \frac{1}{\delta_0}\dist_{\EE}(\Pi(z_1), \, \Pi(z_2))\right)_+
 \]
 for all~$z = (z_1, \, z_2)\in\EE\times\EE$,
 where~$t_+ := \max(t, \, 0)$ is the positive part.
 It is not difficult to check that this
 function has all the desired properties.
\end{proof}

\paragraph{Euclidean embeddings and function spaces.}
By Nash isometric embedding, we can identify~$\EE$,
$\NN$ as closed submanifolds of some Euclidean spaces,
say~$\EE\subseteq\R^{\ell}$ and~$\NN\subseteq\R^m$.
Since~$\NN$, $\EE$ are both compact,
the geodesic distance functions~$\dist_{\NN}$, $\dist_{\EE}$
induced by the Riemannian metrics
are equivalent to the Euclidean one.
In other words, there exists a constant~$C$ such that
\begin{gather}
 \abs{w_1 - w_2} \leq \dist_{\NN}(w_1, \, w_2)
  \leq C \abs{w_1 - w_2}, \label{geodistN} \\
 \abs{z_1 - w_2} \leq \dist_{\EE}(z_1, \, z_2)
  \leq C \abs{z_1 - z_2}, \label{geodistE}
\end{gather}
for any~$w_1\in\NN$, $w_2\in\NN$, $z_1\in\EE$, $z_2\in\EE$.
These inequalities can be proved, e.g., reasoning by contradiction,
using the smoothness and compactness of~$\NN$, $\EE$.
Thanks to~\eqref{geodistN}, \eqref{geodistE}, 
we can extend~$\Pi$, $\Phi$, $\Xi$ to Lipschitz
maps~$\Pi\colon\R^{\ell}\to\R^{m}$, $\Phi\colon\R^{\ell}\to\R^{\ell}$,
$\Xi\colon\R^\ell\times\R^\ell\to\R$
(denoted with the same symbols, by abuse of notation).
We define function spaces such as
\[
 \BV(\Omega, \, \NN) := \left\{u\in\BV(\Omega, \, \R^m)\colon u(x)\in\NN
 \textrm{ for a.e. } x\in\Omega\right\} \! ,
\]
which is a metric space with the distance induced by
the norm in~$\BV(\Omega, \, \NN)$.
We define other function spaces, such as 
$\BV(\Omega, \, \EE)$ or~$W^{1,p}(\Omega, \, \NN)$
for instance, in a completely analogous way.
Choosing a different embedding, say, 
$\NN^\prime\subseteq\R^{m^\prime}$
results in a metric space~$\BV(\Omega, \, \NN^\prime)$
that is equivalent to~$\BV(\Omega, \, \NN)$,
in the sense that there exists a Lipschitz map 
$\BV(\Omega, \, \NN)\to\BV(\Omega, \, \NN^\prime)$
with Lipschitz inverse. Indeed, an isometric
diffeomorphism~$\psi\colon \NN\to\NN^\prime$
is also a Lipschitz map with Lipschitz inverse
with respect to the Euclidean distance in
the ambient spaces~$\R^m$, $\R^{m^\prime}$, 
because of~\eqref{geodistN}.
Therefore, composition with~$\psi$ induces 
a Lipschitz map $\BV(\Omega, \, \NN)\to\BV(\Omega, \, \NN^\prime)$
with Lipschitz inverse, by the chain rule 
(see e.g.~\cite[Theorem~3.96]{AmbrosioFuscoPallara}).
It would also be possible to 
define~$\BV(\Omega \, \NN)$, $\BV(\Omega, \, \EE)$ in
a completely intrinsic way, without any reference to a
Euclidean embedding (see for instance the metric space
approach in~\cite{Ambrosio-metricBV}),
but we will not need such generality for our purposes.

\paragraph{Orientable and non-orientable singularities.}
We recall the notion of orientable
and non-orientable singularities of a map~$u\colon\Omega\to\NN$.
Suppose, first, that~$u$ is continuous except for a finite number
of discontinuity points~$p_1, \, \ldots, \, p_n$.
Given a continuous map
$\gamma\colon\S^1\to\Omega\setminus\{p_1, \, \ldots, \, p_n\}$,
we will say that~$u$ is orientable on~$\gamma(\S^1)$
if there exists a continuous map~$v\colon\S^1\to\EE$
such that~$u(\gamma(\omega)) = \Pi(v(\omega))$ for all~$\omega\in\S^1$.
The homotopy lifting property (see~\cite[Proposition~1.30]{Hatcher})
guarantees that, for any closed annulus
$\overline{B}_R(a)\setminus B_r(a)\subseteq
\Omega\setminus\{p_1, \, \ldots, \, p_n\}$,
$u$ is either orientable on both~$\partial B_R(a)$
and~$\partial B_r(a)$, or it is non-orientable on both.
As a consequence, for any singular point~$p_i$,
$u$ is either orientable on~$\partial B_\rho(p_i)$ 
for all~$\rho > 0$ small enough or it is non-orientable 
on~$\partial B_\rho(p_i)$ for all~$\rho > 0$ small enough.
Accordingly, we will say that~$p_i$ is an orientable 
singularity of~$u$ or a non-orientable one.
The homotopy lifting property also implies that orientability
is preserved by uniform convergence: if~$\{u^k\}_{k\in\mathbb{N}}$
is a sequence of smooth maps that converges uniformly on the image
of~$\gamma\colon\S^1\to\Omega\setminus\{p_1, \, \ldots, \, p_n\}$,
then for sufficiently large values of~$k$,
$u^k$ is orientable on~$\gamma(\S^1)$ if and only if
the limit map~$u$ is.

The case
$u\in W^{1,2}_{\mathrm{loc}}(\Omega\setminus\{p_1, \, \ldots, \, p_n\}, \, \NN)$
follows from the previous discussion, 
thanks to the following density result
(see~\cite[Proposition p.~267]{SchoenUhlenbeck2}).
Given a closed set~$K\subseteq\R^2$, we write~$C^\infty(K, \, \NN)$
for the set of functions~$K\to\NN$ that admit a smooth
extension~$U\to\NN$ on some open neighbourhood~$U$ of~$K$.

\begin{prop}[\cite{SchoenUhlenbeck2}] \label{prop:density}
 For any bounded, Lipschitz domain~$D\subseteq\R^2$,
 the set~$C^\infty(\overline{D}, \, \NN)$
 is dense in~$W^{1,2}(D, \, \NN)$.
\end{prop}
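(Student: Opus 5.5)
The plan is the classical Schoen--Uhlenbeck argument: mollify, use the two-dimensional critical scaling to show that the mollified map stays uniformly close to~$\NN$, and then project back onto~$\NN$. Since~$\NN\subseteq\R^m$ is a smooth compact submanifold, there is~$\delta>0$ such that the nearest-point projection~$P$ is smooth on the tubular neighbourhood~$\{y\in\R^m\colon \dist(y,\NN)<\delta\}$. Given~$u\in W^{1,2}(D,\NN)$, the candidate approximants are~$u_\eps := P\circ(\tilde u*\rho_\eps)$. Here~$\rho_\eps$ is a standard mollifier and~$\tilde u$ is an $\NN$-valued $W^{1,2}$-extension of~$u$ to an open neighbourhood~$U\supseteq\overline D$. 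Such a~$u_\eps$ is smooth on a neighbourhood of~$\overline D$, so it lies in~$C^\infty(\overline D,\NN)$, provided~$\tilde u*\rho_\eps$ takes values in the $\delta$-neighbourhood of~$\NN$.

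\emph{Step 1 (extension).} We construct~$\tilde u$. Since~$D$ is Lipschitz, each boundary point has a neighbourhood in which~$D$ is the epigraph~$\{x_2>g(x_1)\}$ of a Lipschitz function~$g$ (after a rotation). The reflection~$(x_1,x_2)\mapsto(x_1,2g(x_1)-x_2)$ is bi-Lipschitz, so composing~$u$ with it gives an $\NN$-valued $W^{1,2}$ extension across that piece of~$\partial D$. We cannot glue local extensions of $\NN$-valued maps by a partition of unity, so instead we use the reflections only to enlarge the domain to a slightly larger set on which every ball~$B_\eps(x)$, $x\in\overline D$, is covered by finitely many bi-Lipschitz images of subsets of~$D$. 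All that Step~2 needs is the local energy estimate, not a globally defined extension. If gluing becomes awkward, the fallback is to avoid extension altogether: near~$\partial D$, mollify along inward-translated kernels, $x\mapsto\int\rho_\eps(y)\,u(x+C\eps e+y)\,\d y$, where~$e$ is a local cone direction, and combine these by a smooth partition of unity. Every point used in these averages lies in~$D\cap B_{C'\eps}(x)$, so a convex combination of them is still close to~$\NN$ by the estimate of Step~2.

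\emph{Step 2 (uniform closeness to~$\NN$; key step).} Let~$w_\eps(x)$ denote the average at scale~$\eps$ (mollified or translated-mollified). Since~$u(z)\in\NN$ for a.e.~$z$,
\[
 \dist(w_\eps(x),\NN)\leq \fint_{B_{C\eps}(x)\cap D}\abs{u(z)-w_\eps(x)}\,\d z
 \leq C\Big(\int_{B_{C\eps}(x)\cap D}\abs{\nabla u}^2\Big)^{1/2}.
\]
The second inequality is Poincaré's inequality on a ball, or on its bi-Lipschitz image, together with the observation that in dimension two the scale factors cancel: $\eps^{2-n}=1$. By absolute continuity of the integral, $\sup_{x\in\overline D}\int_{B_{C\eps}(x)\cap D}\abs{\nabla u}^2\to0$ as~$\eps\to0$. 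Hence~$\dist(w_\eps,\NN)<\delta$ on a neighbourhood of~$\overline D$ for all small~$\eps$.

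\emph{Step 3 (convergence).} First, $w_\eps\to u$ in~$W^{1,2}(D)$ by standard properties of mollification; this includes the translated version, since translations are continuous in~$L^2$. Next, $P$ is smooth with bounded derivatives on the closed $\delta/2$-neighbourhood of~$\NN$, and~$P\circ u=u$. The chain rule then gives $\nabla(P\circ w_\eps)=DP(w_\eps)\nabla w_\eps$. This converges to~$DP(u)\nabla u=\nabla u$ in~$L^2$: along subsequences, $DP(w_\eps)\to DP(u)$ a.e. and is bounded, so dominated convergence applies, and subsequence-independence of the limit yields convergence of the whole family. I expect the main obstacle to be the boundary treatment in Step~1, that is, producing averages near~$\partial D$ that use only values of~$u$ in~$D$ yet are smooth up to~$\overline D$. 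The interior estimate of Step~2 is the conceptual heart of the argument but is routine.
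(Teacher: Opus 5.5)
Your proposal is the Schoen--Uhlenbeck argument that the paper invokes by citation without reproducing it: mollify, use the two-dimensional scale invariance of the Dirichlet energy with Poincaré's inequality to get $\dist(w_\eps,\NN)\to0$ uniformly, then compose with the nearest-point projection. It is correct and follows the same route.

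The boundary step you flag is easier than you think, because the extension $\tilde u$ need not be $\NN$-valued: your distance bound only tests $u$ on $D$. So any extension $\tilde u\in W^{1,2}(U,\R^m)$ (for instance local reflections glued by a partition of unity), Poincaré on full balls for $\tilde u$, and the density bound $\abs{B_r(x)\cap D}\geq c\,r^2$ for Lipschitz $D$ already give the uniform estimate on a neighbourhood of $\overline D$. This removes the need for the convex-combination step in your fallback, which as written would also require the local averages to be close to one another, not merely each close to $\NN$. It also removes the need for Poincaré on the sets $B_r(x)\cap D$, which can be disconnected near reentrant corners.
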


Now, if
$u \in W^{1,2}_{\mathrm{loc}}(\Omega\setminus\{p_1, \, \ldots, \, p_n\}, \, \NN)$
then~$u$ is continuous on the circle~$\partial B_\rho(p_i)$
for every~$i$ and almost every value of the radius~$\rho>0$,
by Fubini theorem and Sobolev embedding
$W^{1,2}(\partial B_\rho(p_i))\hookrightarrow C^0(\partial B_\rho(p_i))$.
By Proposition~\ref{prop:density} and a diagonal argument,
there exists a sequence of smooth
functions~$u^k\colon\Omega\setminus\{p_1, \, \ldots, \, p_n\}\to\NN$
that converges to~$u$ in
$W^{1,2}_{\mathrm{loc}}(\Omega\setminus\{p_1, \, \ldots, \, p_n\})$.
Fubini theorem and Sobolev embeddings again give
uniform convergence~$u^k\to u$ on~$\partial B_\rho(p_i)$
for all index~$i$ and a.e.~radius~$\rho$.
As a consequence, $u$ is orientable on~$\partial B_\rho(p_i)$
if and only if~$u^k$, for sufficiently large~$k$, is.
The homotopy lifting property shows that,
for each index~$i$ and almost every~$\rho_1$, $\rho_2$ small enough,
$u$ is orientable on~$\partial B_{\rho_1}(p_i)$
if and only if it is on~$\partial B_{\rho_2}(p_i)$.
Therefore, it still makes sense to say whether~$p_i$ is an orientable
singularity of~$u$ or not.

\paragraph{Sobolev liftings.}

As explained in the introduction, there are several results
available on existence (or non-existence) of liftings 
for Sobolev maps. Nevertheless, we recall here some
of the existing arguments, because they will be useful to us later on.

\begin{lemma} \label{lemma:limit}
 Let~$D\subseteq\R^2$ be a bounded, Lipschitz domain,
 $u\in W^{1,2}(D, \, \NN)$, and~$u^k\in C^\infty(\overline{D},\, \NN)$
 be such that~$u^k\rightharpoonup u$ weakly in~$W^{1,2}(D, \, \NN)$ and pointwise a.e.
 For each~$k\in\mathbb{N}$, let~$v^k\in C^\infty(\overline{D},\, \EE)$
 be a lifting of~$u^k$. Then, we can extract
 a (non-relabelled) subsequence so that
 $v^k\rightharpoonup v$ weakly in~$W^{1,2}(D)$,
 where~$v$ is a lifting of~$u$.
\end{lemma}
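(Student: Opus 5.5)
The plan is to use the fact that the liftings are pointwise isometric on each fibre, so that $\abs{\nabla v^k} = \abs{\nabla u^k}$. From there, weak compactness gives a limit, and pointwise convergence identifies it as a lifting.

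Since $\Pi$ is a local isometry, for each $k$ and each $x\in D$ the differential $d\Pi(v^k(x))$ maps $T_{v^k(x)}\EE$ isometrically onto $T_{u^k(x)}\NN$. Differentiating $u^k = \Pi\circ v^k$ therefore gives $\abs{\nabla v^k} = \abs{\nabla u^k}$ pointwise, where the norms are the Euclidean ones of the embeddings; the tangent vectors $\partial_j v^k$ have the same length as their images because the embeddings are isometric. Weak convergence of $u^k$ makes $\norm{\nabla u^k}_{L^2(D)}$ bounded, so $\norm{\nabla v^k}_{L^2(D)}$ is bounded too. Moreover $v^k$ takes values in the compact set $\EE\subseteq\R^\ell$, so $\norm{v^k}_{L^\infty}$ is bounded, and hence $\{v^k\}$ is bounded in $W^{1,2}(D,\R^\ell)$.

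Next we extract a subsequence with $v^k\rightharpoonup v$ weakly in $W^{1,2}(D)$. Rellich's theorem, which applies because $D$ is bounded and Lipschitz, lets us also assume $v^k\to v$ strongly in $L^2(D)$, and after a further subsequence, pointwise a.e. Since $\EE$ is closed, $v(x)\in\EE$ for a.e. $x$, so $v\in W^{1,2}(D,\EE)$.

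Finally, the extended $\Pi\colon\R^\ell\to\R^m$ is continuous (indeed Lipschitz), so pointwise a.e. convergence gives $\Pi(v^k(x))\to\Pi(v(x))$ for a.e. $x$. But $\Pi(v^k(x)) = u^k(x)\to u(x)$ a.e. by hypothesis, hence $u = \Pi\circ v$ a.e. and $v$ is a lifting of $u$.

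The only step needing care is the gradient identity $\abs{\nabla v^k} = \abs{\nabla u^k}$. If one prefers not to invoke isometry of the differential directly, it can be replaced by a local Lipschitz bound. By Remark~\ref{rk:uniform}, locally $v^k = (\Pi_{|B_i})^{-1}\circ u^k$ with $(\Pi_{|B_i})^{-1}$ an isometry between geodesic balls. Combined with \eqref{geodistN}--\eqref{geodistE}, this yields $\abs{\nabla v^k}\le C\abs{\nabla u^k}$, which is enough for the bound.
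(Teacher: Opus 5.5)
Your proposal is correct and follows essentially the same route as the paper: the identity $\abs{\nabla v^k} = \abs{\nabla u^k}$ coming from $\Pi$ being a local isometry, then weak compactness with Rellich to get a subsequence converging a.e., and finally a pointwise passage to the limit giving $v\in\EE$ and $u = \Pi\circ v$ a.e. Your explicit $L^\infty$ bound on $v^k$ (from compactness of $\EE$) supplies a detail the paper leaves implicit when it asserts boundedness in $W^{1,2}$.
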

\begin{proof}
 By definition, the covering map~$\Pi$
 is a local isometry, i.e.~its differential at any point~$x\in\EE$
 is an isometry between the tangent spaces 
 of~$\EE$ and~$\NN$ at~$x$ and~$\Pi(x)$.
 Since directional derivatives of a map~$w\in W^{1,2}(D, \, \EE)$
 are tangent to~$\EE$ at almost every point, 
 the chain rule implies
 \begin{equation} \label{Lifting-nabla}
  \abs{\nabla w} = \abs{\nabla(\Pi\circ w)} 
  \qquad \textrm{a.e.~in } \Omega \
  \textrm{ for any } w\in W^{1,2}(D, \, \EE).
 \end{equation}
 In particular, $|\nabla u^k| = |\nabla v^k|$
 for all~$k$. Therefore, the sequence~$(v^k)_{k\in\mathbb{N}}$
 is bounded in~$W^{1,2}(D, \, \EE)$ and we can extract a subsequence
 that converges weakly in~$W^{1,2}(D)$,
 strongly in~$L^2(D)$ and almost everywhere to some
 limit~$v\in W^{1,2}(D, \, \R^\ell)$. By taking the limit
 pointwise a.e., we deduce that $v(x)\in\EE$ and~$u(x) = \Pi(v(x))$
 for a.e.~$x\in\Omega$.
\end{proof}

Combining Lemma~\ref{lemma:limit} with Proposition~\ref{prop:density}
and classical existence results for smooth liftings
(see e.g.~\cite[Proposition~1.33]{Hatcher}), 
we obtain existence results for $W^{1,2}$-liftings. 
For example, any map~$u\in W^{1,2}(D, \, \NN)$ in a bounded,
Lipschitz, simply connected domain~$D\subseteq\R^2$ admits a
lifting~$v\in W^{1,2}(D, \, \EE)$. This fact remains true
in higher-dimensional simply connected domains~$D\subseteq\R^n$,
but the proof is more delicate; see e.g.~\cite{BethuelChiron}.
We also have uniqueness of the lifting in~$W^{1,2}$,
up to composition with~$\Phi$.

\begin{lemma} \label{lemma:uniqueness}
 Let~$D\subseteq\R^2$ be a bounded, Lipschitz domain.
 Let~$v_1\in W^{1,2}(D, \, \EE)$, $v_2\in W^{1,2}(D, \, \EE)$
 be liftings of the same map~$u\in W^{1,2}(D, \, \EE)$.
 Then, either~$v_1 = v_2$ a.e.~in~$\Omega$ or~$v_1 = \Phi(v_2)$ a.e.~in~$\Omega$.
\end{lemma}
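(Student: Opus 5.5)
The plan is to compare the two liftings through the auxiliary function of Lemma~\ref{lemma:Xi}. Set $f(x) := \Xi(v_1(x), \, v_2(x))$ for a.e.~$x\in D$. Because $\Pi(v_1(x)) = \Pi(v_2(x)) = u(x)$ a.e., the fibre $\Pi^{-1}(u(x))$ is exactly $\{v_2(x), \, \Phi(v_2(x))\}$, so at a.e.~point either $v_1(x) = v_2(x)$ or $v_1(x) = \Phi(v_2(x))$. By~\eqref{Xi} and~\eqref{Xibis}, this gives $f(x)\in\{1, \, -1\}$ for a.e.~$x\in D$. It therefore suffices to show that $f$ is a.e.~constant.

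For this, I use that $\Xi$ has been extended to a Lipschitz map $\R^\ell\times\R^\ell\to\R$ and that $(v_1, \, v_2)\in W^{1,2}(D, \, \R^{2\ell})$. The chain rule for Lipschitz functions composed with Sobolev maps then gives $f\in W^{1,2}(D)$; the bounded-variation chain rule \cite[Theorem~3.96]{AmbrosioFuscoPallara} already ensures $f\in W^{1,1}_{\loc}$, which is all that is needed. A Sobolev function taking values in $\{1, \, -1\}$ a.e.~has zero gradient a.e. One way to see this is that $\nabla f = 0$ a.e.~on each level set of $f$ (Stampacchia). Another is to write $f = 2\chi_E - 1$ with $E := \{f = 1\}$: then $\chi_E\in W^{1,1}$, so $D\chi_E$ is absolutely continuous, while the derivative of a characteristic function is concentrated on $\partial^* E$, which is $\L^2$-null, so $D\chi_E = 0$. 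Since $D$ is connected, a function with vanishing distributional gradient is a.e.~constant. Hence either $f\equiv 1$, giving $v_1 = v_2$ a.e., or $f\equiv -1$, giving $v_1 = \Phi(v_2)$ a.e.

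The only delicate step is justifying that $f$ is Sobolev, i.e.~that the chain rule applies to the Lipschitz extension of $\Xi$ rather than to a $C^1$ function. On a bounded domain, $W^{1,2}\subseteq W^{1,1}$, so the BV chain rule for Lipschitz maps applies directly and gives $f\in\BV(D)$ with $\abs{Df}\ll\L^2$, that is, $f\in W^{1,1}(D)$. This is enough for the argument above.

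If one prefers to avoid $\Xi$, an alternative is the following. The set $E = \{v_1 = v_2\}$ satisfies $\chi_E = \tfrac12(1 + f)$, and one can instead use the Lipschitz function $\abs{v_1 - v_2}$, which takes only the values $0$ and $\abs{z - \Phi(z)}$. The latter are bounded below by a positive constant by compactness, but they are not constant, so $\Xi$ is the cleaner choice, and I would use it.
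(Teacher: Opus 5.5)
Your proof is correct and takes the same route as the paper. You compose the two liftings with the Lipschitz function $\Xi$ of Lemma~\ref{lemma:Xi}, use the chain rule to obtain a Sobolev function with values in $\{1,-1\}$, deduce that it is constant, and read off the alternative from \eqref{Xi}--\eqref{Xibis}; your justification of the constancy step (via $D\chi_E = 0$ and the connectedness of $D$) spells out what the paper leaves implicit.
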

\begin{proof}
 Let~$w := \Xi(v_1, \, v_2)$, where~$\Xi$ is the function
 given by Lemma~\ref{lemma:Xi}.
 Since~$\Xi$ is Lipschitz-continuous, the chain rule implies
 that~$w\in W^{1,2}(\Omega, \, \R)$. Moreover,
 for almost every~$x\in\Omega$ we have $\Pi(v_1(x)) = \Pi(v_2(x))$,
 and hence $w(x)\in \{1, \, -1\}$, by~\eqref{Xi}, \eqref{Xibis}.
 Since~$\{1, \, -1\}$ is a discrete set,
 it follows that~$w$ must be constant.
 The lemma follows, again by~\eqref{Xi}, \eqref{Xibis}.
\end{proof}
}

\subsection{Properties of minimal relative connections}
\label{sect:minrelconn}

{We collect here a couple of observations
on minimal connections which will be useful later on.}

\begin{lemma} \label{lemma:minbd}
 Let~$\Omega\subseteq\R^2$ be a bounded, simply connected domain
 of class~$C^1$. Let $a_1$, \ldots $a_p$ be distinct points in~$\Omega$, and
 let~$\{L_1, \, \ldots, \, L_q\}$ be a minimal connection
 for~$\{a_1, \, \ldots, \, a_p\}$ relative to~$\Omega$.
 If~$L_j$ has an endpoint~$b_j\in\partial\Omega$,
 then~$L_j$ is orthogonal to~$\partial\Omega$ at~$b_j$.
\end{lemma}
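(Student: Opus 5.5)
We argue by a first-variation (perturbation) argument on the boundary endpoint. Suppose~$L_j$ joins~$a_i$ to~$b_j\in\partial\Omega$ and is a segment of a minimal connection. Parametrise~$\partial\Omega$ near~$b_j$ by a $C^1$ curve~$\gamma\colon(-\eta,\eta)\to\partial\Omega$ with~$\gamma(0)=b_j$ and~$\gamma'(0)=\tau\neq 0$ tangent to~$\partial\Omega$. For small~$t$, replace~$L_j$ by the segment~$L_j^t:=[a_i,\gamma(t)]$, leaving all other segments unchanged. Properties (ii) and (iii) in the definition of a connection are clearly preserved: the endpoint~$a_i$ keeps its multiplicity, and~$\gamma(t)\in\partial\Omega$. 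The length~$f(t):=\abs{\gamma(t)-a_i}$ is differentiable at~$0$ (since~$a_i\neq b_j$) with
\[
 f'(0) = \frac{(b_j-a_i)\cdot\tau}{\abs{b_j-a_i}}.
\]
If every~$L_j^t$ is admissible, minimality forces~$f$ to have a local minimum at~$t=0$. Hence~$f'(0)=0$, i.e.\ $b_j-a_i\perp\tau$, which is exactly the orthogonality of~$L_j$ to~$\partial\Omega$ at~$b_j$.

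The main obstacle is admissibility condition (i): the perturbed segment~$L_j^t$ must still lie in~$\overline\Omega$. The original~$L_j$ may touch~$\partial\Omega$ at interior points, or run tangentially along the boundary near~$b_j$. In that case~$[a_i,\gamma(t)]$ might exit~$\overline\Omega$ for~$t$ of one sign, and we only get a one-sided inequality. To handle this, we first reduce to the case where~$L_j$ meets~$\partial\Omega$ only at~$b_j$. If~$L_j$ touches~$\partial\Omega$ at some point~$c$ strictly between~$a_i$ and~$b_j$, then~$[a_i,c]$ is a strictly shorter admissible replacement, contradicting minimality. So~$L_j\setminus\{b_j\}\subseteq\Omega$, and the half-open segment~$[a_i,b_j)$ lies in~$\Omega$.

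Next we need the perturbation to work for~$t$ of both signs, which requires~$L_j$ to hit~$\partial\Omega$ transversally. If~$L_j$ were tangent to~$\partial\Omega$ at~$b_j$, then~$(b_j-a_i)$ would be parallel to~$\tau$ and~$f'(0)\neq0$. Moving~$\gamma(t)$ in the direction that decreases length, the points of~$\partial\Omega$ near~$b_j$ on that side would lie close to the segment. Using the inward normal, the segment from~$a_i$ to a suitable nearby boundary point stays in~$\overline\Omega$. We will make this precise with a $C^1$ local graph representation of~$\partial\Omega$ near~$b_j$.

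Once~$L_j$ is transversal at~$b_j$ and avoids~$\partial\Omega$ elsewhere, compactness of~$[a_i,b_j-\rho\,e]$ gives a positive distance to~$\partial\Omega$. Here~$e$ is the unit direction of~$b_j-a_i$ and~$\rho>0$ is small. Near~$b_j$, the local graph description of~$\partial\Omega$ shows that segments from~$a_i$ to~$\gamma(t)$ enter the boundary cone transversally. Together these imply~$L_j^t\subseteq\overline\Omega$ for all small~$\abs t$, and the two-sided variation gives~$f'(0)=0$.

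An alternative simpler route to the tangent case: pick~$t$ with~$f'(0)t<0$ and shorten slightly by instead taking~$[a_i,c]$, where~$c$ is the first point where the segment~$[a_i,\gamma(t)]$ meets~$\partial\Omega$. This segment is admissible and no longer than~$f(t)<f(0)$, which directly contradicts minimality. This cut-at-first-hit trick in fact removes the need for the detailed transversality analysis. Given any~$t$ with~$f(t)<f(0)$, truncating~$[a_i,\gamma(t)]$ at its first boundary hit produces an admissible connection of smaller length. Hence~$f(t)\ge f(0)$ for all small~$t$, so~$f'(0)=0$. I would write the proof in this form.
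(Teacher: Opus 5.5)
Your final argument is correct and is essentially the paper's proof. The paper asserts that, by minimality, $b_j$ minimises $\abs{x-a_i}^2$ over $x\in\partial\Omega$ and then applies Lagrange multipliers. Your truncation of $[a_i,\gamma(t)]$ at its first hit with $\partial\Omega$ is exactly the justification of that minimising property, and differentiating $f(t)=\abs{\gamma(t)-a_i}$ along a $C^1$ parametrisation replaces the Lagrange-multiplier step. With the truncation trick, the preliminary transversality and tangency discussion is indeed unnecessary and can be dropped.
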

\begin{proof}
 Let~$a_j$ be the endpoint of~$L_j$ that is not
 on~$\partial\Omega$. By minimality of~$\{L_1, \, \ldots, \, L_q\}$,
 $b_j$ must be a minimiser of the
 function~$f(x) := \abs{x - a_j}^2$
 subject to the constraint~$x\in\partial\Omega$.
 Then, the lemma follows easily
 from Lagrange multipliers theorem.
\end{proof}

\begin{figure}[t]
 \centering
 \begin{subfigure}{.4\textwidth}
\begin{tikzpicture}[scale=1.1]
\definecolor{lightblue}{RGB}{150,180,255}

\def\DomainPath{
  plot[domain=-2:2,samples=400,variable=\y]
  ({-1 + 2^(-3.7*\y*\y + 1.2)}, {\y})
}

\draw[fill=lightblue] \DomainPath -- (4.5, 2) -- (4.5, -2) -- cycle;

\coordinate (b1) at (0.7,1);
\coordinate (c2) at (3,1.3);
\coordinate (b2) at (0.7,-1);
\coordinate (c1) at (3.5,-0.7);
\coordinate (d1) at (0.7, 0.342);
\coordinate (d2) at (0.7, -0.342);

\fill (b1) circle (1.2pt);
\fill (c1) circle (1.2pt);
\fill (b2) circle (1.2pt);
\fill (c2) circle (1.2pt);
\fill (d1) circle (1.2pt);
\fill (d2) circle (1.2pt);

\node[left]  at ($(b1) + (0.1, 0.1)$) {$b_1$};
\node[right] at (c1) {$c_1$};
\node[left]  at ($(b2) + (0.15, -0.15)$) {$b_2$};
\node[right] at (c2) {$c_2$};
\node[left]  at ($(d1) + (0.1, 0.1)$) {{\color{red} $d_1$}};
\node[left]  at ($(d2) + (0.1, 0.1)$) {{\color{red} $d_2$}};

\draw[thick, name path=1] (b1) -- (c1);
\draw[thick, name path=2] (b2) -- (c2);
\path [name intersections={of=1 and 2,by=o}];
\node[above] at (o) {$x_0$};
\fill (o) circle (1.2pt);

\draw[red, thick, dashed] (b2) -- (d2);
\draw[red, thick, dashed] (b1) -- (d1);
\draw[red, thick, dashed] (c1) -- (c2);

\end{tikzpicture}

  \caption{}
  \label{fig:disjoint1}
 \end{subfigure} \ 
 \begin{subfigure}{.33\textwidth}
\begin{tikzpicture}[scale=1.1]
\definecolor{lightblue}{RGB}{150,180,255}

\def\DomainPath{
  plot[domain=-1:3.5, samples=400, variable=\t]
  ({\t}, {-2 + 2^(-3.7*(\t - 1.5)*(\t  - 1.5) + 0.7)})
}

\draw[fill=lightblue] \DomainPath -- (3.5, 2) -- (-1, 2) --  cycle;

\coordinate (b1) at (0.5,1);
\coordinate (b2) at (0,-1);
\coordinate (b3) at (2.7,-1);
\coordinate (c1) at (1.7,0.4);
\coordinate (d2) at (1.065,-1);
\coordinate (d3) at (1.935,-1);

\fill (b1) circle (1.2pt);
\fill (b2) circle (1.2pt);
\fill (b3) circle (1.2pt);
\fill (c1) circle (1.2pt);
\fill (d2) circle (1.2pt);
\fill (d3) circle (1.2pt);

\node[left]  at ($(b1) + (0.1, 0.1)$) {$b_1$};
\node[left]  at ($(b2) + (0.15, -0.15)$) {$b_2$};
\node[right]  at ($(b3) + (0, -0.15)$) {$b_3$};
\node[right] at (c1) {$c_1 = c_2$};
\node[right]  at ($(d2) + (-0.25, -0.3)$) {{\color{red} $d_2$}};
\node[left]  at ($(d3) + (0.3, -0.3)$) {{\color{red} $d_3$}};

\draw[red, thick] (b1) -- (c1);
\draw[thick] (b2) -- (c1);
\draw[thick] (b3) -- (c1);

\draw[red, thick, dashed] (b2) -- (d2);
\draw[red, thick, dashed] (b3) -- (d3);

\end{tikzpicture}

  \caption{}
  \label{fig:disjoint2}
 \end{subfigure} \ 
 \begin{subfigure}{.24\textwidth}
  \begin{tikzpicture}[scale=1.1]
\definecolor{lightblue}{RGB}{150,180,255}

\def\DomainPath{
  plot[domain=-1.3:1.3, samples=400, variable=\t]
  ({\t}, {-2 + 2^(-3*\t*\t + 0.3)})
}

\draw[fill=lightblue] \DomainPath -- (1.3, 2) -- (-1.3, 2) --  cycle;

\coordinate (b1) at (0,1);
\coordinate (b2) at (0,0);
\coordinate (c1) at (0,-0.768856);

\fill (b1) circle (1.2pt);
\fill (b2) circle (1.2pt);
\fill (c1) circle (1.2pt);

\node[right] at (b1) {$b_1$};
\node[right] at (b2) {$b_2$};
\node[right] at (c1) {$c_1 = c_2$};

\draw[thick] (b2) -- (c1);
\draw[red, thick] (b1) -- (b2);

\end{tikzpicture}

  \caption{}
  \label{fig:disjoint3}
 \end{subfigure}
  \caption{Illustration of the proof of Lemma~\ref{lemma:mindisj}.}
  \label{fig:disjoint}
 \end{figure}
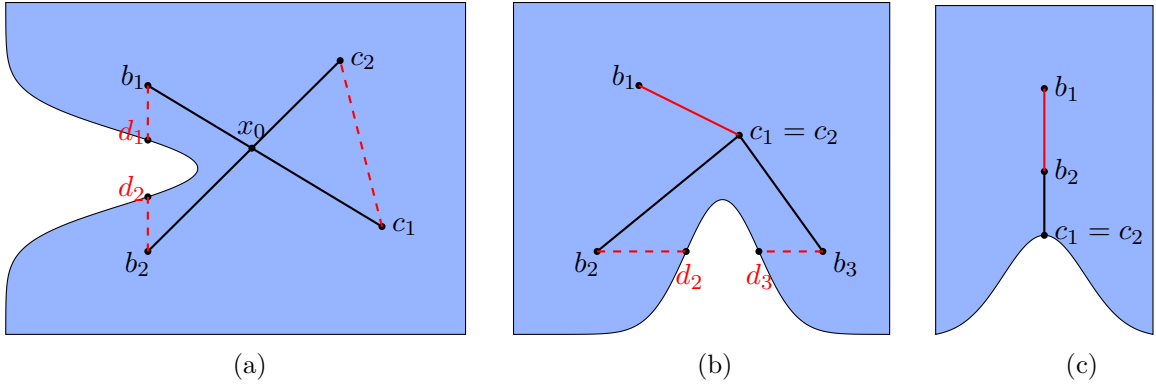

\begin{lemma} \label{lemma:mindisj}
 Let~$\Omega\subseteq\R^2$ be a bounded, simply connected domain
 of class~$C^1$. Let $a_1$, \ldots $a_p$ be distinct points in~$\Omega$, and
 let~$\{L_1, \, \ldots, \, L_q\}$ be a minimal connection
 for~$\{a_1, \, \ldots, \, a_p\}$ relative to~$\Omega$.
 Then, the line segments~$L_j$ are pairwise disjoint.
 Moreover, for each index~$i$ such that~$a_i\in\Omega$,
 there is exactly one index~$j$ such that~$a_i$
 is an endpoint of~$L_j$. Finally, for each~$j$,
 the intersection~$L_j\cap\partial\Omega$
 is either empty or an endpoint of~$L_j$.
\end{lemma}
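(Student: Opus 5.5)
The plan is to argue throughout by contradiction: starting from a minimal connection that violates one of the three properties, I will build an admissible connection of strictly smaller total length. Two facts are used repeatedly. First, the parity condition~(iii) is unchanged under the operation ``add a segment~$[x,y]$ and also add or remove segments so that every endpoint degree changes by an even number''. More generally, the parity condition only concerns the degree of each~$a_i$ modulo two, counted over segments whose endpoints are among the~$a_i$ or on~$\partial\Omega$. Second, a minimal connection exists, since there are finitely many combinatorial types and the boundary endpoints range over the compact set~$\partial\Omega$. Note also that every~$a_i$ lies in~$\Omega$, so the ``moreover'' statement concerns every point.

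I will prove the three claims in the following order.

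\emph{Exactly one segment at each~$a_i$.} Suppose~$a_i$ is an endpoint of three or more segments, say~$L_1=[a_i,x]$ and~$L_2=[a_i,y]$ among them. I replace~$L_1,L_2$ by a segment~$[x,y]$ if that segment lies in~$\overline\Omega$; this keeps the parities at~$x$ and~$y$ and lowers the degree of~$a_i$ by two. The triangle inequality gives a strict decrease unless~$a_i\in[x,y]$. The difficulty is that~$[x,y]$ may leave~$\overline\Omega$, since~$\Omega$ is not convex. Instead, I will delete both segments and, if~$x$ (respectively~$y$) is some~$a_k$, attach~$a_k$ to~$\partial\Omega$ or to~$a_i$ differently. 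The cleaner route is to use only the points~$a_k$ and boundary points: if~$x\in\partial\Omega$, deleting~$L_1$ changes only the parity of~$a_i$. So at most one of the segments at~$a_i$ goes to the boundary. Whenever~$a_i$ has at least three segments, I remove two of them. If both end at other points~$a_k,a_l$, I replace them by the straight path~$a_k a_i a_l$ rerouted as the single segment~$[a_k,a_l]$ when admissible. Otherwise I use~$[a_k,a_i]\cup[a_i,a_l]$ viewed as a polygonal path inside~$\overline\Omega$. I expect this to need a variant: I will show that in a minimal connection the degree is one by also removing one of the degree-three edges and its partner. Honestly, I expect the key reduction here to be to remove two segments and observe that the parities at all points change evenly, except possibly at~$a_k$ and~$a_l$, which each lose one. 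Those can be restored by a single segment from each of them to the boundary, or directly from one to the other, whichever stays in~$\overline\Omega$. Comparing lengths through the triangle inequality along paths inside~$\overline\Omega$ is where care is needed.

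\emph{Boundary intersections.} If~$L_j$ meets~$\partial\Omega$ at an interior point~$z$ of~$L_j$, I split~$L_j$ at the first boundary point~$z$ reached from an endpoint~$a_i$. I then replace~$L_j$ by~$[a_i,z]$, together with~$[z,a_k]$ if~$L_j=[a_i,a_k]$, or simply truncate if the other endpoint is on~$\partial\Omega$. These pieces are admissible connections relative to~$\Omega$ with length at most that of~$L_j$. Making the inequality strict requires one more step: by Lemma~\ref{lemma:minbd}, the truncated segment must be orthogonal to~$\partial\Omega$ at~$z$. But a segment crossing or touching~$\partial\Omega$ at an interior point while staying in~$\overline\Omega$ is tangent to~$\partial\Omega$ there, because~$\Omega$ is~$C^1$. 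This gives a contradiction.

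\emph{Disjointness.} If two segments cross at a point~$x_0$ that is not a common endpoint, I swap the four half-segments as in Figure~\ref{fig:disjoint1}. The endpoints are paired differently, using straight segments or boundary connections chosen so as to remain in~$\overline\Omega$, and the triangle inequality gives a strict decrease. If two segments share an endpoint, that endpoint cannot be an~$a_i$, by the degree-one claim. So it is a boundary point~$c$, and both segments are orthogonal to~$\partial\Omega$ at~$c$ by Lemma~\ref{lemma:minbd}. Hence they are collinear and overlap, as in Figure~\ref{fig:disjoint3}, and the nearer~$a_i$ can be reconnected more cheaply.

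The main obstacle throughout is keeping the replacement segments inside~$\overline\Omega$ for nonconvex~$\Omega$. I would handle it by always truncating at the first point where a candidate segment hits~$\partial\Omega$, which is admissible and no longer.
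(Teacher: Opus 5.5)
Your strategy is the paper's: build strictly shorter competitors via the triangle inequality, truncate at the first boundary hit to cope with non-convexity, and use Lemma~\ref{lemma:minbd} when two segments share a boundary endpoint. Only the order differs: you prove degree one first and disjointness last. Your boundary step is actually more careful than the paper's. Suppose $L_j=[a_i,a_k]$ touches $\partial\Omega$ tangentially at a single interior point~$z$. Then splitting at~$z$ gives only \emph{equal} length. Your argument closes exactly that case: the new connection is still minimal, so Lemma~\ref{lemma:minbd} makes $[a_i,z]$ orthogonal to $\partial\Omega$ at $z$, while $C^1$ regularity forces it to be tangent.

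The gap is the degree-one step, which is never actually carried out. You list several alternatives without committing to one. The one concrete replacement, $[x,y]$ in place of $[a_i,x]\cup[a_i,y]$, gives no strict decrease when $a_i\in[x,y]$, i.e.\ when the two segments leave $a_i$ in opposite directions. Then $[x,y]=[a_i,x]\cup[a_i,y]\subseteq\overline{\Omega}$, so truncation does not help. The fallback ``use $[a_k,a_i]\cup[a_i,a_l]$ as a polygonal path'' shortens nothing. Your disjointness argument relies on the degree-one claim to exclude a common endpoint at some $a_i$, so the gap propagates there.

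The missing observation is that you are free to choose the pair. If $a_i$ is an endpoint of at least three segments, there are two cases:
\begin{itemize}
\item Two of them leave $a_i$ in the same direction. Then they are nested, $[a_i,x]\subseteq[a_i,y]$, and $[x,y]\subseteq\overline{\Omega}$ is strictly shorter than both together.
\item The three directions are distinct. Among three distinct directions at most one pair is opposite, so take a non-opposite pair. Then $|x-y|<|x-a_i|+|a_i-y|$.
\end{itemize}
If $[x,y]\not\subseteq\overline{\Omega}$, replace it by $[x,d_x]$ and $[y,d_y]$, where $d_x$, $d_y$ are the first points of $\partial\Omega$ met along $[x,y]$ from $x$ and from $y$. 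Discard degenerate segments and segments with both endpoints on $\partial\Omega$. This also covers $x\in\partial\Omega$ or $y\in\partial\Omega$, so your separate ``at most one boundary segment'' step becomes unnecessary. Parities at $x$ and $y$ are unchanged and the degree of $a_i$ drops by two, contradicting minimality.

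The paper proves disjointness first and deduces degree one from it. However, its treatment of a common endpoint $x_0=a_i$ uses the same replacement with a third segment, and tacitly needs this same choice of pair.

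Finally, your remark on existence of minimal connections is not needed, since the lemma assumes one is given. As written it is also unjustified, because the number of segments is not bounded a priori.
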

\begin{proof}
 Throughout this proof, we will denote by~$[a, \, b]$
 the straight line segment of endpoints~$a\in\R^2$, $b\in\R^2$.
 Suppose, towards a contradiction, that
 $L_1$, \ldots, $L_q$ are not pairwise disjoint.
 Say, for instance, that there exists~$x_0\in L_1\cap L_2$,
 where~$L_1 = [b_1, \, c_1]$, $L_2 = [b_2, \, c_2]$ are distinct.
 Suppose first that~$b_1$, $c_1$, $b_2$, $c_2$
 are distinct (this is the case illustrated 
 in Figure~\ref{fig:disjoint1}). 
 Then, the triangle inequality implies
 \[
  \abs{b_1 - b_2} + \abs{c_1 - c_2}
  < \abs{b_1 - x_0} + \abs{b_2 - x_0}
   + \abs{c_1 - x_0} + \abs{c_2 + x_0}
  = \abs{b_1 - c_1} + \abs{b_2 - c_2}
 \]
 If~$[b_1, \, b_2]$, $[c_1, \, c_2]$
 are contained in~$\overline{\Omega}$,
 then the family $\{L_3, \, L_4, \, \ldots, L_q, \,
 \, [b_1, \, b_2], \, [c_1, \, c_2]\}$
 is a connection for~$\{a_1, \, \ldots, \, a_p\}$
 relative to~$\Omega$ with smaller total length
 than~$\{L_1, \, \ldots, \, L_p\}$, which is impossible
 because~$\{L_1, \, \ldots, \, L_p\}$ is minimal.
 Therefore, at least one between~$[b_1, \, b_2]$ and~$[c_1, \, c_2]$
 must contain a point of~$\R^2\setminus\overline{\Omega}$.
 Say, for instance, $[b_1, \, b_2]\not\subseteq\overline{\Omega}$
 but~$[c_1, \, c_2]\subseteq\overline{\Omega}$.
 Let~$d_1$, $d_2$ be the points of~$[b_1, \, b_2]\cap\partial\Omega$
 closest to~$b_1$, $b_2$ respectively.
 These points exists, because~$[b_1, \, b_2]\cap\partial\Omega$
 is compact, and are unique. By definition, $[b_1, \, d_1]$
 and~$[b_2, \, d_2]$ are contained in~$\overline{\Omega}$,
 so the family $\{L_3, \, L_4, \, \ldots, \, L_q, \,
 \, [b_1, \, d_1], \, [b_2, \, d_2], \, [c_1, \, c_2]\}$
 is a connection for~$\{a_1, \, \ldots, \, a_p\}$
 relative to~$\Omega$ and its total length
 is smaller than that of~$\{L_1, \, \ldots, \, L_p\}$.
 This is a contradiction.
 In case neither~$[b_1, \, b_2]$
 nor~$[c_1, \, c_2]$ is contained in~$\overline{\Omega}$,
 we obtain a contradiction in a similar way.

 It remains to consider the case the points~$b_1$, $c_1$, $b_2$, $c_2$
 are \emph{not} distinct, which means, $L_1$ and~$L_2$
 meet at a common endpoint. Say, for instance, $c_1 = c_2 = x_0$
 but~$b_1\neq b_2$ (otherwise, $L_1$ and~$L_2$ would coincide).
 Suppose~$x_0$ is one of the points~$a_1$, \ldots, $a_p$
 {(see Figure~\ref{fig:disjoint2})}.
 Since the number of line segments in the connection
 that have an endpoint at~$x_0$ is odd, by definition,
 there is at least another segment~$L_j$
 that has an endpoint at~$x_0$, say~$L_3 = [b_3, \, x_0]$.
 If~$[b_2, \, b_3]\subseteq\overline{\Omega}$,
 then 
 $\{L_1, \, L_4, \, L_5, \, \ldots, \, L_q,
 \, [b_1, \, c_1], \, [b_2, \, b_3] \}$
 is a connection for~$a_1, \, \ldots, \, a_p$ relative to~$\Omega$
 with smaller total length then~$\{L_1, \, \ldots, \, L_q\}$,
 which is a contradiction.
 Otherwise, we obtain a contradiction by replacing~$[b_2, \, b_3]$
 with smaller line segments $[b_2,\,d_2]$, $[b_3,\,d_3]$ 
 contained in~$\overline{\Omega}$, as above.
 If~$x_0$ is a point in~$\partial\Omega$ 
 {(as in Figure~\ref{fig:disjoint3})}, then both~$L_1$ and~$L_2$
 must be orthogonal to~$\partial\Omega$ at~$x_0$, by Lemma~\ref{lemma:minbd}.
 Since both~$L_1$ and~$L_2$ must be contained in~$\overline{\Omega}$,
 which is locally the subgraph of a regular function,
 one between~$L_1$ and~$L_2$ must
 be contained in the other one. Then, we obtain
 a contradiction by considering 
 $\{[b_1, \, c_1], \, L_3, \, L_4, \, \ldots, \, L_q\}$,
 which is a connection for~$\{a_1, \, \ldots, \, a_p\}$
 relative to~$\Omega$ and violates the minimality
 of $\{L_1, \, L_2, \, \ldots, \, L_q\}$.
 This proves that the line segments~$L_1, \, \ldots, \, L_p$
 are pairwise disjoint.

 By definition of connection relative to~$\Omega$,
 for each~$i$ with~$a_i\in\Omega$ there must be
 an odd number of indices~$j$
 --- hence, at least one --- such that~$a_i$ belongs to~$L_j$.
 Since the~$L_j$ are pairwise disjoint, such~$j$ is unique.
 Finally, if~$L_j$ is a line segment that intersects~$\partial\Omega$
 at more than one point, we construct
 a connection for~$\{a_1, \, \ldots, \, a_p\}$ relative to~$\Omega$
 with smaller total length than~$\{L_1, \, \ldots, \, L_q\}$
 by replacing~$L_j$ with smaller line segments, as above.
\end{proof}

\subsection{Liftings and sets of finite perimeter}
\label{sect:lifting-reformulate}

Given a set~$E\subseteq\R^2$, we will denote by~$\partial^* E$
the essential boundary of~$E$, which is defined as
\[
 \partial^* E:=\left\{x\in\R^2\colon
 \limsup_{\rho\to 0}\frac{\abs{B_\rho(x)\cap E}}{\rho^2} > 0, \
 \limsup_{\rho\to 0}\frac{\abs{B_\rho(x)\setminus E}}{\rho^2} > 0
 \right\} \! .
\]
Given a measurable subset~$E\subseteq\Omega$,
we define its perimeter relative to~$\Omega$ as
\[
 P(E, \, \Omega)
 := \sup\left\{\int_\Omega \div\mathbf{V}(x) \, \d x \colon
 \mathbf{V}\in C^\infty_{\mathrm{c}}(\Omega, \, \R^2),
 \ \norm{\mathbf{V}}_{L^\infty(\Omega)} \leq 1 \right\}
 < +\infty.
\]
For smooth sets, $\partial^* E = \partial E$, and the perimeter~$P(E, \, \Omega)$
coincides with the surface area of~$\Omega\cap\partial E$,
by the Gauss-Green theorem.
More generally, a result of Federer~\cite[4.5.11]{Federer}
implies that~$P(E, \, \Omega) = \H^1(\Omega\cap \partial^* E)$.
If the set~$\Omega$ has smooth, or even just Lipschitz,
boundary and~$E$ has finite perimeter relative to~$\Omega$,
then~$E$ has finite perimeter in~$\R^2$.
(This follows from the definition of perimeter,
via a localisation argument based on cut-off functions
that approximate the indicator function of~$\Omega$.)
Since in the sequel we will be only be interested
in subsets of a regular domain~$\Omega$,
we will simply write that a set `has finite perimeter'
and will not specify whether the perimeter
is taken relative to~$\Omega$ or~$\R^2$.

Given two sets~$A$, $B$, we will denote by~$A\triangle B
:= (A\setminus B)\cup (B\setminus A)$ their symmetric difference.
{The symmetric difference is compatible with
the essential boundary, as made precise
by the following lemma.

\begin{lemma} \label{lemma:boundary}
 If~$A$, $B$ are sets of finite perimeter,
 then~$A\triangle B$ has finite perimeter and
 \begin{equation} \label{boundary}
  \partial^*(A\triangle B)
  = \partial^* A \triangle \partial^*B \mod\H^1.
 \end{equation}
\end{lemma}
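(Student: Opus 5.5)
We plan to work with the indicator functions and De Giorgi's structure theorem. The indicator of $A\triangle B$ is $\chi_{A\triangle B} = \abs{\chi_A - \chi_B} = \chi_A + \chi_B - 2\chi_A\chi_B$. Both $\chi_A$ and $\chi_B$ lie in $\BV\cap L^\infty$. The map $(s,t)\mapsto\abs{s-t}$ is Lipschitz, so by the chain rule in $\BV$ (or simply because $\abs{D\abs{f}}\le\abs{Df}$ together with the triangle inequality) we get
\[
 \abs{D\chi_{A\triangle B}} \le \abs{D\chi_A} + \abs{D\chi_B}.
\]
Hence $P(A\triangle B)\le P(A)+P(B)<+\infty$, which settles the first claim.

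For \eqref{boundary} we use Federer's theorem and De Giorgi's theorem. For a set $E$ of finite perimeter, $\H^1$-almost every point of $\R^2$ is in exactly one of three classes: a point of density $0$, a point of density $1$, or a point of $\partial^* E$. In the last case we may take it to be a point of the reduced boundary, where $E$ blows up in $L^1_{\loc}$ to a half-plane $H_E(x) := \{y\colon y\cdot\nu_E(x) < 0\}$. The plan is to fix an $\H^1$-generic point $x$ that has such a classification for $A$, for $B$, and for $A\triangle B$ simultaneously. This is possible since the union of the three exceptional sets is $\H^1$-null. We then compare the blow-ups. The blow-up of $\chi_{A\triangle B}$ at $x$ equals $\abs{\text{blow-up of }\chi_A - \text{blow-up of }\chi_B}$, because the map is $1$-Lipschitz in $L^1_{\loc}$.

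The case analysis goes as follows.
\begin{enumerate}[(a)]
 \item If $x$ has density $0$ or $1$ for both $A$ and $B$, then $\abs{\chi_A-\chi_B}$ blows up to the constant $0$ or $1$. So $x\notin\partial^*(A\triangle B)$, and also $x\notin \partial^*A\triangle\partial^*B$.
 \item If $x\in\partial^*A$ and $x$ has density $0$ or $1$ for $B$, the blow-up is $\chi_H$ or $1-\chi_H$ for a half-plane $H$. Then $x\in\partial^*(A\triangle B)$, consistently with $x\in\partial^*A\setminus\partial^*B$. The case with $A$ and $B$ exchanged is symmetric.
 \item If $x\in\partial^*A\cap\partial^*B$, the blow-up is $\abs{\chi_{H_A}-\chi_{H_B}}=\chi_{H_A\triangle H_B}$, a union of two opposite sectors with a common vertex at the origin. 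We must show that $x\notin\partial^*(A\triangle B)$ for $\H^1$-a.e. such $x$.
\end{enumerate}

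Case (c) is the main obstacle, because the blow-up is a half-plane only when $\nu_A(x)=\pm\nu_B(x)$. To handle it, we use the fact that $\partial^*A$ and $\partial^*B$ are countably $\H^1$-rectifiable. At $\H^1$-a.e. point of $\partial^*A\cap\partial^*B$, the approximate tangent lines of the two rectifiable sets coincide (the locality of approximate tangents on intersections), so $\nu_A(x)=\pm\nu_B(x)$. If $\nu_A=\nu_B$, then $H_A=H_B$ and the blow-up is $0$. If $\nu_A=-\nu_B$, then $H_A\triangle H_B=\R^2$ up to a null set and the blow-up is $1$. In both sub-cases $x$ is a point of density $0$ or $1$ for $A\triangle B$, and so $x\notin\partial^*(A\triangle B)$.

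Putting the cases together gives equality of the two sets up to an $\H^1$-null set. One small subtlety needs checking: the blow-up only gives $L^1_{\loc}$ convergence, so being "not in the essential boundary" requires the density statement, which follows directly from $L^1(B_1)$ convergence of the rescaled indicators. The converse direction, in case (b), follows from the same convergence.
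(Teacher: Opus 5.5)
Your proof is correct, but it takes a different route from the paper's. The paper argues in a few lines via the BV chain rule. It writes $\chi_{A\triangle B}=\max(\chi_A,\chi_B)-\min(\chi_A,\chi_B)$ and identifies the jump set of $\chi_{A\triangle B}$ with the points of $\partial^*A\cup\partial^*B$ where the one-sided traces of this combination differ. It then inspects the possible trace values: if exactly one of $\chi_A$, $\chi_B$ jumps, then $|\chi_A-\chi_B|$ jumps; if both jump, both bits flip and $|\chi_A-\chi_B|$ does not.

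You instead work pointwise, using Federer's theorem and De~Giorgi blow-ups at reduced-boundary points. You make explicit the one genuinely geometric fact: $\nu_A=\pm\nu_B$ at $\H^1$-a.e. point of $\partial^*A\cap\partial^*B$, which you obtain from the locality of approximate tangent lines of rectifiable sets. In the paper this fact is hidden inside the fine properties of the vector-valued map $(\chi_A,\chi_B)$: by Federer--Vol'pert, $\H^1$-a.e. approximate discontinuity point is a jump point with a single normal, so the componentwise traces are taken with respect to a common orientation.

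Your version is longer but more self-contained. It also shows clearly why case~(c) is the only delicate one: without alignment of the normals, the blow-up would be two opposite sectors, whose density lies strictly between $0$ and $1$, so $x$ would lie in $\partial^*(A\triangle B)$. The paper's version is shorter, given the chain-rule machinery.

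A minor point: you do not need to classify $x$ in advance with respect to $A\triangle B$. You compute its density at $x$ directly from the blow-ups of $A$ and $B$.
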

In this proof and later on, we will denote
by~$\J_{w}$ the jump set of a function~$w\in\BV(\Omega)$
and by~$w^+$, $w^-$ the traces of~$w$ on either side of the jump set.
(See e.g.~\cite[Chapter~3]{AmbrosioFuscoPallara} for more details.)
We extend~$w^+$, $w^-$ to functions~$\widetilde{w}^+$, $\widetilde{w}^-$
defined~$\H^1$-almost everywhere by setting~$\widetilde{w}^+(x) := w^+(x)$,
$\widetilde{w}^-(x) := w^-(x)$ for~$x\in\J_u$ and
\[
 \widetilde{w}^+(x) = \widetilde{w}^-(x) :=
 \lim_{\rho\to 0} \frac{1}{\abs{B_\rho(x)}}\int_{B_\rho(x)} w(y) \, \d y
\]
otherwise. By Federer-Vol'pert theorem
(see e.g.~\cite[Theorem~3.78]{AmbrosioFuscoPallara}),
this limit exists in~$\R$ for~$\H^1$-a.e.~$x\in\Omega\setminus\J_w$.

\begin{proof}[Proof of Lemma~\ref{lemma:boundary}]
 This lemma is essentially a consequence of
 the chain rule for BV-functions
 (see e.g.~\cite[Theorem~3.96]{AmbrosioFuscoPallara}).
 Indeed, let~$\chi_A\in\BV(\R^2)$, $\chi_B\in\BV(\R^2)$
 be the indicator functions of~$A$, $B$
 respectively (i.e., $\chi_A := 1$ in~$A$,
 $\chi_A := 0$ in~$\R^2\setminus A$, and similarly for~$\chi_B$).
 The jump sets~$\J_{\chi_A}$, $\J_{\chi_B}$ are equal to~$\partial^*A$,
 $\partial^* B$ respectively, modulo~$\H^1$-null sets
 (see e.g.~\cite[Example~3.68]{AmbrosioFuscoPallara}).
 By the chain rule, the function
 \[
  \chi_{A\triangle B} = \max(\chi_A, \, \chi_B) - \min(\chi_A, \, \chi_B)
 \]
 belongs to~$\BV(\R^2)$ and its jump set
 coincides, modulo~$\H^1$-null sets,
 with the set of points~$x\in\partial^*A\cup\partial^*B$
 such that
 \begin{equation*}
  \begin{split}
   &\max(\widetilde{\chi_A}^+(x), \, \widetilde{\chi_B}^+(x))
   - \min(\widetilde{\chi_A}^+(x), \, \widetilde{\chi_B}^+(x)) \\
   &\hspace{2cm} \neq
   \max(\widetilde{\chi_A}^-(x), \, \widetilde{\chi_B}^-(x))
   - \min(\widetilde{\chi_A}^-(x), \, \widetilde{\chi_B}^-(x)).
  \end{split}
 \end{equation*}
 By direct inspection, we see this condition
 is satisfied if and only if~$x\in \partial^*A \triangle \partial^*B$,
 except possibly for an~$\H^1$-negligible set of~$x$'s.
\end{proof}

Lemma~\ref{lemma:boundary} suggests the presence of
an underlying homological structure. Indeed, the symmetric
difference~$\triangle$ induces Abelian group structures
on the classes of finite perimeter sets in~$\R^2$ and
$1$-rectifiable subsets of~$\R^2$, modulo $\H^1$-null sets.
The equality~\eqref{boundary} shows that the essential
boundary~$\partial^*$ is a group homomorphism.
All that is missing from a completely formed homology theory
is a suitable notion of boundary for~$1$-rectifiable sets in~$\R^2$.
As it turns out, this is precisely the notion
of boundary in the sense of currents modulo two
(or equivalently, flat chains with coefficients in~$\Z/2\Z$).}

Throughout the rest of Section~\ref{sect:lifting}, we fix
distinct points~$a_1$, \ldots, $a_p$, $b_1$, \ldots, $b_r$
in~$\Omega$ and a map
\[
 u\in W^{1,1}(\Omega, \, \NN)\cap
  W^{1,2}_{\loc}(\Omega\setminus
  \{a_1, \, \ldots, a_p, \, b_1, \, \ldots, \, b_r\}, \, \NN)
\]
with non-orientable singularities at the points~$a_j$
and orientable ones at the points~$b_h$.
We also fix a minimal connection~$\{L_1, \, \ldots, \, L_q\}$
for~$\{a_1, \, \ldots, \, a_p\}$ relative to~$\Omega$.
We start by constructing a lifting~$v^\star$ of~$u$
whose jump set is the union of all line segments~$L_j$.

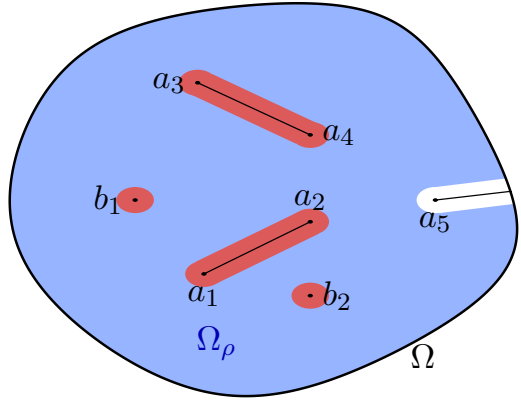
\begin{figure}[t]
  \centering
  \resizebox{0.35\textheight}{!}{%
  \begin{tikzpicture}[scale=0.7, yscale=0.7]
\definecolor{lightblue}{RGB}{150,180,255}
\colorlet{textblue}{blue!70!black}
\colorlet{lightred}{red!40}
\definecolor{myred}{RGB}{220,90,90}



\def\DomainPath{
  plot[domain=0:360,samples=400,smooth,variable=\t]
  ({4.2 + (4 - 0.2*sin(2*\t) - 0.2*sin(3*\t))*cos(\t)},
   {1.2 - (4.5 - 0.2*sin(-2*\t) - 0.2*sin(-3*\t))*sin(-\t)})
}

\fill[lightblue] \DomainPath -- cycle;

\node at (3.5,-2.3) {{\color{textblue}$\Omega_\rho$}};
\node at (6.8,-2.6) {$\Omega$};


\coordinate (b1) at (2.2,1);
\coordinate (b2) at (5,-1.2);

\fill[myred] (b1) circle (0.3);
\fill[myred] (b2) circle (0.3);

\fill (b1) circle (1.2pt);
\fill (b2) circle (1.2pt);

\node[left]  at (b1) {$b_1$};
\node[right] at (b2) {$b_2$};


\coordinate (a1) at (5,0.5);
\coordinate (a2) at (3.3,-0.7);

\begin{scope}[rotate around={35:(4.15,-0.1)}]
 \fill[myred]
  ($(4.15,-0.1)+(-1.0,-0.3)$)
  rectangle
  ($(4.15,-0.1)+(1.0,0.3)$);
\end{scope}

\fill[myred] (a1) circle (0.3);
\fill[myred] (a2) circle (0.3);
\draw (a1)--(a2);
\fill (a1) circle (1.2pt);
\fill (a2) circle (1.2pt);
\node[above] at (a1) {$a_2$};
\node[below] at (a2) {$a_1$};


\coordinate (a3) at (3.2, 3.7);
\coordinate (a4) at (5, 2.5);

\begin{scope}[rotate around={-34:(4.1,3.1)}]
 \fill[myred]
  ($(4.1,3.1)+(-1.0,-0.29)$)
  rectangle
  ($(4.1,3.1)+(1.0,0.29)$);
\end{scope}

\fill[myred] (a3) circle (0.3);
\fill[myred] (a4) circle (0.3);
\draw (a3)--(a4);
\fill (a3) circle (1.2pt);
\fill (a4) circle (1.2pt);
\node[left]  at (a3) {$a_3$};
\node[right] at (a4) {$a_4$};


\coordinate (a5) at (7,1);
\coordinate (c)  at (8.2,1.2);

\begin{scope}[rotate around={10:(7.6,1.1)}]

\fill[white]
  ($(7.6,1.1)+(-0.6,-0.3)$)
  rectangle
  ($(7.6,1.1)+(1.2,0.3)$);
\end{scope}

\fill[white] (a5) circle (0.3);
\draw (a5)--(c);
\fill (a5) circle (1.2pt);

\node[below] at (a5) {$a_5$};

\draw[thick] \DomainPath -- cycle;

\end{tikzpicture}

  }
  \caption{The set~$\Omega_\rho$ introduced in the proof
  of Lemma~\ref{lemma:goodlifting} (in blue). The set~$G_\rho$
  is the union of~$\Omega_\rho$ and the red regions.}
  \label{fig:Grho}
 \end{figure}

\begin{lemma} \label{lemma:goodlifting}
 Given~$u$ and~$L_1$, \ldots $L_q$ as above,
 there exists a lifting~$v^\star\in\SBV(\Omega, \, \EE)$
 of~$u$ such that
 \begin{equation} \label{goodlifting}
  \J_{v^\star} = \bigcup_{j=1}^q L_j \qquad \mod\H^1.
 \end{equation}
\end{lemma}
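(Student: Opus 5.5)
The plan is to build $v^\star$ as a $W^{1,2}_{\loc}$ lifting on the cut domain $\Omega\setminus\bigcup_j L_j$ (minus the points $b_h$), then check that it is SBV and that it really jumps across each $L_j$.

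\emph{Step 1: the domain $\Omega_\rho$.} By Lemma~\ref{lemma:mindisj}, the segments $L_j$ are pairwise disjoint, each $a_i$ is the endpoint of exactly one $L_j$, and each $L_j$ meets $\partial\Omega$ at most at one endpoint. For small $\rho>0$, let $\Omega_\rho$ be $\Omega$ minus the closed $\rho$-neighbourhoods of the $a_i$ and of the $b_h$, and minus thin closed rectangles around each $L_j$, as in Figure~\ref{fig:Grho}.

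\emph{Step 2: simple connectivity of the cut domain.} I claim $\Omega\setminus\bigcup_j L_j$, with the points $b_h$ removed, has the following property. Every closed curve in it around which $u$ is non-orientable encloses an odd number of the $a_i$, and hence crosses the connection an odd number of times. I make this precise with $\Z/2$ intersection numbers.

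Each component of $\Omega\setminus\bigcup_j L_j$ is a planar domain whose holes are the trees of the connection not touching $\partial\Omega$. Since $\Omega$ is simply connected and each $L_j$ is a segment, each such tree is a single segment $[a_i,a_k]$ joining two non-orientable points. Segments reaching $\partial\Omega$ create no holes: they cut $\Omega$ rather than puncture it. So every loop in $\Omega_\rho$ is homotopic to a product of small loops around the $b_h$ and loops around the segments $[a_i,a_k]$.

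Monodromy of the $\Z/2$ covering is a homomorphism from $\pi_1$ to $\Z/2$. It is trivial around each $b_h$, by orientability. Around $[a_i,a_k]$ it is the product of two non-orientable monodromies, hence also trivial. The monodromy is computed via smooth approximations (Proposition~\ref{prop:density}) on circles of a.e.\ radius. Therefore $u|_{\Omega_\rho}$ has trivial monodromy, and the classical lifting criterion together with Lemma~\ref{lemma:limit} gives a lifting $v_\rho\in W^{1,2}(\Omega_\rho,\EE)$.

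\emph{Step 3: a global lifting.} Letting $\rho\to0$ along a sequence, Lemma~\ref{lemma:uniqueness} lets me choose $v_\rho$ consistently: on each component, $v_{\rho'}$ restricted to $\Omega_\rho$ equals $v_\rho$ or $\Phi(v_\rho)$, and I compose with $\Phi$ if needed. This yields $v^\star$ on $\Omega\setminus\bigcup_j L_j$, of class $W^{1,2}_{\loc}$ away from the points, with $|\nabla v^\star|=|\nabla u|$ a.e.\ by \eqref{Lifting-nabla}. Hence $\nabla v^\star\in L^1$ because $u\in W^{1,1}$.

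Since $\EE$ is bounded and the $L_j$ have finite length, $v^\star\in\SBV(\Omega,\EE)$ with $\J_{v^\star}\subseteq\bigcup_j L_j$. To see this, note that the distributional derivative has absolutely continuous part $\nabla v^\star$ and a jump part supported on $\bigcup L_j$, with density at most $\operatorname{diam}\EE$ times $\H^1$. This can be verified by the Gauss--Green formula on $\Omega_\rho$ and letting $\rho\to0$.

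\emph{Step 4: the reverse inclusion, which I expect to be the main obstacle.} I must show that $v^\star$ jumps $\H^1$-a.e.\ on each $L_j$. Suppose instead that on a positive-measure subset of the relative interior of some $L_j$ the traces agree. The traces on either side are both liftings of the trace of $u$, which is $W^{1,2}$ on a.e.\ transversal position. By the argument of Lemma~\ref{lemma:uniqueness} applied with $\Xi(v^+,v^-)\in\{\pm1\}$, this function is constant along each connected piece of $L_j$ minus endpoints, since $u$ is $W^{1,2}$ across it. So if the traces agree at one point, they agree on all of $L_j$ and $v^\star$ is $W^{1,2}$ across $L_j$.

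Then $\{L_i\}_{i\neq j}$ would be a connection for which the same monodromy reasoning applies. This forces the endpoints of $L_j$ to be orientable, or, if an endpoint is on $\partial\Omega$, forces the one interior endpoint $a_i$ to be orientable. Either way this is a contradiction: a small circle around $a_i$ then crosses the jump set an even number of times (zero), so $u$ would be orientable there. Making the trace argument rigorous needs a local flattening near $L_j$ and Fubini on a.e.\ parallel segment, which I would carry out on a tubular neighbourhood of $L_j$ minus its endpoints.
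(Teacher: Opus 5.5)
Your Steps 1--3 follow the paper's proof closely. The paper also removes $\rho$-neighbourhoods of the segments and of the $b_h$, and observes that $u$ is orientable on every boundary component of what remains. It lifts via Proposition~\ref{prop:density}, the smooth lifting criterion and Lemma~\ref{lemma:limit}, then glues along $\rho_k\searrow0$ using Lemma~\ref{lemma:uniqueness}. It gets $v^\star\in\SBV$ from boundedness and $W^{1,1}$-regularity off a closed set of finite length, citing \cite[Proposition~4.4]{AmbrosioFuscoPallara} instead of your Gauss--Green computation.

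One correction in Step 1: the paper also discards the layer $\{\dist(\cdot,\partial\Omega)\le\rho\}$, and you must do the same. The reason is that $u$ is only $W^{1,2}_{\loc}$ away from the singular points. On your $\Omega_\rho$, which reaches $\partial\Omega$, $u$ need not be in $W^{1,2}$. So neither Lemma~\ref{lemma:limit} nor the conclusion $v_\rho\in W^{1,2}(\Omega_\rho,\EE)$ is available as stated.

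Step 4 is where you take a different route. The paper argues pointwise. For a.e.\ $x\in L_j$ it takes a rectangle $K_x\csubset\Omega$ containing exactly one non-orientable point (an endpoint of $L_j$), with $\partial K_x\cap\bigcup_k L_k=\{x\}$. Since $u$ is non-orientable on $\partial K_x$ while $v^\star$ is continuous on $\partial K_x\setminus\{x\}$, $v^\star$ must jump at $x$. You instead prove that, along $L_j$, $v^\star$ either jumps a.e.\ or not at all, and then get a contradiction on small circles around the non-orientable endpoint. Your route avoids the BV slicing step implicit in the paper's passage from ``discontinuous along $\partial K_x$ at $x$'' to ``$x\in\J_{v^\star}$''. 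The paper's route needs no analysis along $L_j$, and it is insensitive to orientable points $b_h$ lying on $L_j$, which simply sit inside $K_x$.

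As written, however, your jump-or-no-jump claim is not justified. The trace of $u$ on the fixed line $L_j$ is only $H^{1/2}_{\loc}$, not $W^{1,2}$: Fubini says nothing about one fixed line. Restricting to parallel segments does not help either, since it does not produce a $\{\pm1\}$-valued function. So the argument of Lemma~\ref{lemma:uniqueness} does not apply verbatim to $\Xi(v^+,v^-)$; you would need the nontrivial fact that $\{\pm1\}$-valued $H^{1/2}$ functions are constant.

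A clean fix uses only the paper's tools. Take a rectangle $R$ straddling a subsegment $I\subseteq L_j$ with $u\in W^{1,2}(R)$, and a lifting $\tilde v\in W^{1,2}(R,\EE)$ of $u$. Apply Lemma~\ref{lemma:uniqueness} on each half of $R$. Then either $v^\star\in W^{1,2}(R)$, or $v^-=\Phi(v^+)\neq v^+$ $\H^1$-a.e.\ on $I$, and overlapping rectangles propagate this alternative along $L_j$.

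This propagation stops at any $b_h\in L_j$, and the hypotheses do not exclude that case. There you need one more step using orientability at $b_h$, e.g.\ comparing $v^\star$ with a $W^{1,2}$ lifting on a small annulus around $b_h$. Finally, your assertion that $\{L_i\}_{i\neq j}$ ``would be a connection'' is false, because the endpoints of $L_j$ lose their odd incidence. It is also unnecessary: the small-circle argument alone gives the contradiction.
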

\begin{proof}
 We proceed along the lines of~\cite[Lemma~A.3]{CanevariMajumdarStroffoliniWang}.
 For any~$\rho>0$ and~$j\in\{1, \, \ldots, \, d\}$, we define
 \[
  U_{j,\rho} := \left\{x\in\R^2\colon
  \dist(x, \, L_j) < \rho \right\} \! .
 \]
 Let~$J$ be the set of indices~$j\in\{1, \, \ldots, \, q\}$
 such that~$L_j$ has an endpoint in~$\partial\Omega$, let
 \[
  G_\rho := \left\{x\in\Omega \colon \dist(x, \, \partial\Omega) > \rho \right\}
  \setminus \bigcup_{j\in J} U_{j,\rho}
 \]
 and
 \[
  \Omega_\rho := G_\rho\setminus \left(\bigcup_{j\notin J} U_{j,\rho}
  \cup \bigcup_{h = 1}^r B_\rho(b_h)\right)\! .
 \]
 {The sets~$G_\rho$, $\Omega_\rho$ are illustrated in Figure~\ref{fig:Grho}.}
 For small enough values of~$\rho$,
 the set~$G_\rho$ is a simply connected domain with
 Lipschitz boundary. Moreover, $G_\rho$ contains an \emph{even}
 number of non-orientable singularities of~$u$.
 Indeed, the non-orientable singularities~$a_i$ inside~$G_\rho$
 are exactly the endpoints of~$L_j$ with~$j\notin J$.
 Since the~$L_j$'s are mutually disjoint (by Lemma~\ref{lemma:mindisj}),
 there must be an even number of such endpoints.
 In particular, for~$\rho$ small enough, $u$ is orientable
 on each boundary component of~$\Omega_\rho$ and of class~$W^{1,2}$
 inside~$\Omega_\rho$.

 {We claim that for a.e.~$\rho$
 there exists a lifting~$v^\star_{\rho}\in W^{1,2}(\Omega_\rho, \, \EE)$
 of~$u_{|\Omega_\rho}$. Indeed, by Proposition~\ref{prop:density}
 and a diagonal argument, we find a sequence of smooth maps
 \[
  u^k\colon\Omega\setminus\{a_1, \, \ldots, \, a_p, \, b_1, \, \ldots, \, b_h\}\to\NN
 \]
 that converges to~$u$ in $W^{1,2}_{\mathrm{loc}}(\Omega
 \setminus\{a_1, \, \ldots, \, a_p, \, b_1, \, \ldots, \, b_h\})$.
 By Fubini theorem and the Sobolev embedding~$W^{1,2}(\partial\Omega_\rho)\hookrightarrow C^0(\partial\Omega_\rho)$, we also have uniform convergence
 $u^k\to u$ on~$\partial\Omega_\rho$ for a.e.~$\rho$.
 Therefore, for sufficiently large values of~$k$,
 $u^k$ is orientable on each component of~$\partial\Omega_\rho$.
 By classical lifting results for smooth maps
 (see e.g.~\cite[Proposition~1.33]{Hatcher}),
 it follows that $u^k|_{\overline{\Omega_\rho}}$ has a smooth lifting~$v^k_\rho\colon\overline{\Omega_\rho}\to\NN$.
 Lemma~\ref{lemma:limit} allows us to extract a
 (non-relabelled) subsequence such that~$v^k_\rho \rightharpoonup v^\star_\rho$,
 where~$v^\star_\rho\in W^{1,2}(\Omega_\rho\, \, \EE)$
 is a lifting of~$u_{\Omega_\rho}$, as claimed.
 Moreover, for~$0 < \rho_1 < \rho_2$ we have
 either~$v^\star_{\rho_2} = v^\star_{\rho_1}$ a.e.~in~$\Omega_{\rho_2}$
 or~$v^\star_{\rho_2} = \Phi(v^\star_{\rho_1})$ a.e.~in~$\Omega_{\rho_2}$,
 because of Lemma~\ref{lemma:uniqueness}.}
 Therefore, we can choose a decreasing sequence~$\rho_k\searrow 0$
 and make sure that $v^\star_{\rho_{k+1}} = v^\star_{\rho_k}$
 a.e.~in~$\Omega_{\rho_{k+1}}$
 for any~$k$, upon replacing~$v^\star_{\rho_k}$ with~$\Phi(v^\star_{\rho_k})$
 if necessary. By glueing, we define a lifting
 \[
  v^\star\in W^{1,2}_{\loc}\left(\Omega\setminus
  \left(\bigcup_{j=1}^q L_j\cup\{b_1, \, \ldots,
  \, b_r\}\right)\!, \, \EE\right)
 \]
 of~$u$. In fact, we have
 $v^\star\in W^{1,1}(\Omega\setminus\cup_{j=1}^q L_j, \, \EE)$
 because
 $\abs{\nabla v^\star} = \abs{\nabla u}$ a.e. (see~\eqref{Lifting-nabla})
 and $\nabla u$ is integrable in~$\Omega$ by assumption.
 We also have~$v^\star\in\SBV(\Omega, \, \EE)$,
 because~$\bigcup_j L_j$ has finite length and~$v^\star$ is bounded
 (see~\cite[Proposition~4.4]{AmbrosioFuscoPallara}).

 It only remains to prove~\eqref{goodlifting}.
 By construction, we have $\J_{v^\star}\subseteq\bigcup_j L_j$;
 we claim that~$\J_{v^\star}$ contains $\H^1$-almost all of~$\bigcup_j L_j$.
 The crucial observation here is that, for any~$j\in\{1, \, \ldots, \, q\}$,
 at least an endpoint of~$L_j$ is a non-orientable singularity of~$u$.
 (This is immediate from the definition of relative connection.)
 Then, by applying Fubini theorem,
 for almost every~$x\in L_j$
 we can construct a rectangle~$K_x\csubset\Omega$
 that contains exactly one non-orientable singularity of~$u$
 (one of the endpoints of~$L_j$)
 and is such that~$\partial K_x\cap (\bigcup_j L_j) = \{x\}$,
 $u\in W^{1,2}(\partial K_x, \, \NN)$, $v^\star\in
 W^{1,2}_{\mathrm{loc}}(\partial K_x\setminus\{x\}, \, \EE)$.
 On the boundary of such a rectangle, $u$ is continuous
 but non-orientable, i.e.~it has no continuous lifting.
 But since~$v^\star$ is continuous in~$\partial K_x\setminus\{x\}$,
 we must have~$x\in\J_{v^\star}$. Therefore,
 $\J_{v^\star}$ contains $\H^1$-almost all of~$\bigcup_j L_j$.
\end{proof}

For any set of finite perimeter~$A\subseteq\Omega$,
we define
\begin{equation} \label{L_A}
 \L_A := (\Omega\cap\partial^* A)
  \triangle\bigcup_{j=1}^q L_j.
\end{equation}
An example is given in Figure~\ref{fig:L_A}.
We recall that the minimal connection~$\{L_1, \, \ldots, \, L_p\}$
has been chosen once and for all.

\begin{figure}[t]
  \centering
 \resizebox{.35\textheight}{!}{%
 \begin{tikzpicture}[scale=0.7, yscale=0.7]
\definecolor{lightblue}{RGB}{150,180,255}
\colorlet{textblue}{lightblue!35!blue!60!black}
\definecolor{myred}{RGB}{220,90,90}
\colorlet{darkred}{myred!50!black}


\def\DomainPath{
  plot[domain=0:360,samples=400,smooth,variable=\t]
  ({4.2 + (4 - 0.2*sin(2*\t) - 0.2*sin(3*\t))*cos(\t)},
   {1.2 - (4.5 - 0.2*sin(-2*\t) - 0.2*sin(-3*\t))*sin(-\t)})
}

\draw[thick, fill=lightblue] \DomainPath -- cycle;

\coordinate (a1) at (2,3);
\coordinate (a2) at (5.5,3.2);
\coordinate (a3) at (2.2,-0.2);
\coordinate (a4) at (5.2,-1.2);

\coordinate (b1) at ($(a1)!0.25!(a2)$);
\coordinate (b2) at ($(a1)!0.8!(a2)$);
\coordinate (c1) at ($(a3)!0.22!(a4)$);
\coordinate (c2) at ($(a3)!0.73!(a4)$);
\coordinate (o) at ($(b2)!0.6!(c1) - (0, 0.4)$);

\fill (a1) circle (1.2pt);
\fill (a2) circle (1.2pt);
\fill (a3) circle (1.2pt);
\fill (a4) circle (1.2pt);

\node[left]  at (a1) {$a_1$};
\node[right] at (a2) {$a_2$};
\node[left]  at (a3) {$a_3$};
\node[right] at (a4) {$a_4$};

\node[draw=darkred, thick, fill=myred, circle through={(b2)}] at ($(b1)!0.75!(b2)$) {};
\node[draw=darkred, thick, fill=myred, circle through={(c2)}] at ($(c1)!0.5!(c2)$) {};
\fill[myred] (b1) -- (b2) -- (c2) -- (c1) -- cycle;
\node[draw=darkred, thick, fill=lightblue, circle, minimum size = 0.7] at (o) {};

\draw[darkred, thick] (b2) -- ($(b1)!0.5!(b2)$);
\draw[darkred, thick] (c1) -- (c2);
\draw[textblue, thick] (a2) -- (b2);
\draw[textblue, thick] (b2) -- (c2);
\draw[textblue, thick] (c2) -- (a4);
\draw[textblue, thick] (a1) -- (b1);
\draw[textblue, thick] (b1) -- (c1);
\draw[textblue, thick] (c1) -- (a3);

\node at (6.8,-2.8) {$\Omega$};
\node at ($(b1)!0.5!(b2) - (0, 1)$) {{\color{darkred}$A$}};


\end{tikzpicture}
}
  \caption{The set~$\L_A$ defined in~\eqref{L_A}. In this example, $A$ is the set in red and $\L_A$ consists of two polygonal paths (in blue) joining~$a_1$ with~$a_3$ and~$a_2$ with~$a_4$, as well as a finite union of loops.}
  \label{fig:L_A}
 \end{figure}
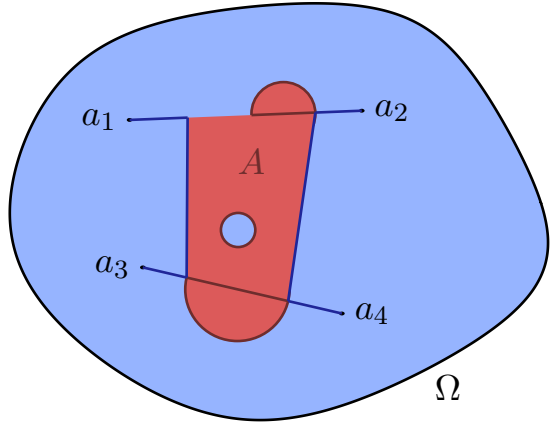

\begin{lemma} \label{lemma:liftbd}
 Let~$\{L_1, \, \ldots, \, L_q\}$ be a minimal connection
 for~$\{a_1, \, \ldots, \, a_p\}$ relative to~$\Omega$.
 For any lifting~$v\in\SBV(\Omega, \, \EE)$ of~$u$,
 there exists a set~$A\subseteq\Omega$ of finite
 perimeter such that
 \begin{equation} \label{genlifting}
  \J_{v} = \L_A \qquad \mod\H^1.
 \end{equation}
 Conversely, given any set~$A\subseteq\Omega$ of finite perimeter
 there exists a lifting~$v\in\SBV(\Omega, \, \EE)$
 that satisfies~\eqref{genlifting}.
\end{lemma}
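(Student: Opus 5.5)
Both directions compare an arbitrary lifting with the lifting~$v^\star$ of Lemma~\ref{lemma:goodlifting}, using the function~$\Xi$ of Lemma~\ref{lemma:Xi}.

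\emph{First direction.} Let~$v\in\SBV(\Omega,\,\EE)$ be a lifting of~$u$ and set
\[
 w := \Xi(v, \, v^\star).
\]
Since~$\Xi$ is Lipschitz and~$v$, $v^\star$ are bounded SBV maps, the chain rule gives~$w\in\BV(\Omega)$. Since~$\Pi(v) = \Pi(v^\star) = u$ a.e., properties~\eqref{Xi}, \eqref{Xibis} give~$w\in\{1,\,-1\}$ a.e. Hence~$w = 1 - 2\chi_A$, where
\[
 A := \{x\in\Omega\colon v(x) = \Phi(v^\star(x))\}
\]
is a set of finite perimeter, and~$\J_w = \Omega\cap\partial^*A \mod\H^1$. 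It remains to show~$\J_v = \J_w\triangle\J_{v^\star} \mod\H^1$, because~\eqref{goodlifting} then gives~\eqref{genlifting}. I would check this pointwise at~$\H^1$-a.e.~$x\in\Omega$, using the one-sided traces. At such~$x$ the traces~$v^\pm(x)$, $(v^\star)^\pm(x)$ lie in~$\EE$ and have the same projection~$u^\pm$. Since~$u\in W^{1,1}$, it has no jump, so~$\Pi(v^+) = \Pi(v^-)$ $\H^1$-a.e. Thus~$v^- \in\{v^+,\,\Phi(v^+)\}$, and the same holds for~$v^\star$. Also~$w^\pm = \Xi(v^\pm,\,(v^\star)^\pm)$: this is a pointwise trace computation, valid because the limits of averages for~$w$ can be computed from the approximate limits on each half-ball, and~$\Xi$ is continuous. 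The identity now follows from a four-case check, using~\eqref{Xitris} and~$\Phi\circ\Phi = \mathrm{id}$: $v$ jumps exactly when one, but not both, of~$v^\star$ and~$w$ flips sign across~$x$.

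\emph{Converse.} Given~$A\subseteq\Omega$ of finite perimeter, define
\[
 v := v^\star \textrm{ on } \Omega\setminus A, \qquad v := \Phi(v^\star) \textrm{ on } A.
\]
Then~$v$ is a lifting of~$u$. To see that~$v\in\SBV$, write~$v = (1-\chi_A)\,v^\star + \chi_A\,\Phi(v^\star)$. This is a combination of bounded BV functions with~$\Phi$ Lipschitz, hence BV. Its Cantor part vanishes, because~$D^c\chi_A = 0$ and~$v^\star$ is SBV. Moreover~$\J_v\subseteq\J_{v^\star}\cup\partial^*A$ has finite length. Then~$\Xi(v,\,v^\star) = 1 - 2\chi_A$, and the same trace computation gives~$\J_v = \L_A$.

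The main obstacle is the rigorous trace bookkeeping at points of~$\J_v\cap\J_{v^\star}\cap\partial^*A$. There one must ensure the orientations of the normals used for~$v^\pm$, $(v^\star)^\pm$ and~$\chi_A^\pm$ are consistent, which holds up to~$\H^1$-null sets since the rectifiable sets share approximate tangents a.e. One must also verify that~$\Phi(v^+)\neq v^+$, so that a flip really is a jump. I would handle both points with the extended traces~$\widetilde{w}^\pm$ and~\cite[Theorem~3.96]{AmbrosioFuscoPallara}, mirroring the proof of Lemma~\ref{lemma:boundary}.
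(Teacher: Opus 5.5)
Your proposal is correct and follows essentially the same route as the paper. You compare with $v^\star$ through $w=\Xi(v,v^\star)$, use $v^-=\Phi(v^+)$ at jump points (because $u\in W^{1,1}$ does not jump) together with \eqref{Xitris} to obtain $\J_w=\J_v\triangle\J_{v^\star}$, and for the converse glue $v^\star$ and $\Phi(v^\star)$ along $A$. The differences from the paper are minor: you take $A=\{w=-1\}$ rather than $\{w=1\}$, which is immaterial because both sets have the same essential boundary in $\Omega$, and you prove the SBV property through the product rule rather than the gluing theorem the paper cites.
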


{As a preliminary remark to the proof,
we observe that for any~$v\in\SBV(\Omega, \, \EE)$
and $\H^1$-a.e.~$x\in\J_v$, there holds
$v^+(x) \in\EE$, $v^-(x)\in\EE$.
Indeed, the traces~$v^+(x)$, $v^-(x)$ satisfy
\[
 \lim_{\rho\to 0} \frac{1}{\abs{B^+_\rho(x)}}
  \int_{B^+_\rho(x)} \abs{v(y) - v^+(x)} \d y = 0,
 \qquad \lim_{\rho\to 0} \frac{1}{\abs{B^-_\rho(x)}}
  \int_{B^-_\rho(x)} \abs{v(y) - v^-(x)} \d y = 0,
\]
where
$B^+_\rho(x) := \{y\in B_\rho(x)\colon y\cdot \nu_v(x) > 0\}$,
$B^-_\rho(x) := \{y\in B_\rho(x)\colon y \cdot\nu_v(x) < 0\}$
and~$\nu_v(x)$ is the unit normal to the jump set at~$x\in\J_v$.
Since~$v$ takes its values in~$\EE$, we have
\[
 \dist_{\R^\ell}(v^+(x), \, \EE)
 \leq \fint_{B^+_\rho(x)} \abs{v(y) - v^+(x)} \d y
 \to 0 \qquad \textrm{as } \rho\to 0,
\]
so~$v^+(x)\in\EE$ and, in a similar way, $v^-(x)\in\EE$.}

\begin{proof}[Proof of Lemma~\ref{lemma:liftbd}]
 Let~$v^\star\in\SBV(\Omega, \, \EE)$ be a lifting of~$u$
 that satisfies~\eqref{goodlifting}, as given by Lemma~\ref{lemma:goodlifting}.
 Let~$v\in\SBV(\Omega, \, \EE)$ be an arbitrary lifting
 of the same map~$u$. Let~$\Phi\colon\EE\to\EE$ be as in~\eqref{Phi}.
 Since~$u\in W^{1,1}(\Omega, \, \NN)$
 has no jump set, the chain rule for BV functions
 (see e.g.~\cite[Theorem~3.96]{AmbrosioFuscoPallara})
 implies that $\Phi(v^+(x)) = \Phi(v^-(x))$,
 $\Phi((v^\star)^+(x)) = \Phi((v^\star)^-(x))$
 at a.e.~$\H^1$-a.e. point~$x\in\J_{v}$, $x\in\J_{v^\star}$ respectively.
 Since the traces at a jump point are different by definition, we must have
 \begin{equation} \label{genlift0}
  v^-(x) = \Phi(v^+(x)), \qquad (v^\star)^-(x) = \Phi((v^\star)^+(x))
 \end{equation}
 at~$\H^1$-a.e. point~$x\in\J_{v}$, $x\in\J_{v^\star}$ respectively.
 We define
 \[
  w := \Xi(v, \, v^\star), \qquad A := \left\{x\in\Omega\colon w(x) = 1\right\},
 \]
 where~$\Xi$ is the function given by Lemma~\ref{lemma:Xi}.
 Since both~$v$ and~$v^\star$ are lifting of the same map~$u$,
 we must have~$w(x)\in\{-1, \, 1\}$ for almost every~$x\in\Omega$.
 The chain rule implies that~$w\in\SBV(\Omega)$
 and that $x$ is a jump point of~$w$
 if and only if $x$ is a jump point for either~$v$ or~$v^\star$
 --- but not both of them, for otherwise
 \[
  w^+(x)
  = \Xi(v^+(x), \, (v^\star)^+(x))
  \stackrel{\eqref{Xitris}}{=}
   \Xi(\Phi(v^+(x)), \, \Phi((v^\star)^+(x))
  \stackrel{\eqref{genlift0}}{=}
   \Xi(v^-(x), \, (v^\star)^-(x))
  = w^-(x).
 \]
 In other words, we have
 \begin{equation*}
  \J_w = \J_{v}\Delta\J_{v^\star} \qquad \mod\H^1.
 \end{equation*}
 Keeping~\eqref{goodlifting} into account, we obtain
 \begin{equation} \label{genlift1}
  \J_v = \J_{w}\Delta\J_{v^\star}
  = \J_{w}\Delta\bigcup_{j=1}^q L_j \qquad \mod\H^1.
 \end{equation}
 On the other hand, since~$w\in\BV(\Omega, \, \{1, \, -1\})$,
 the set~$A$ has finite perimeter and
 \begin{equation} \label{genlift2}
  \J_w = \Omega\cap\partial^* A \qquad \mod\H^1
 \end{equation}
 (see e.g.~\cite[Example~3.68]{AmbrosioFuscoPallara}).
 Together, \eqref{genlift1} and~\eqref{genlift2}
 imply that~$A$ satisfies~\eqref{genlifting}.
 Conversely, given~$A\subseteq\Omega$
 of finite perimeter, we define
 \[
  v(x) := \begin{cases}
       v^\star(x)       &\textrm{if } x\in A \\
       \Phi(v^\star(x)) &\textrm{if } x\in\Omega\setminus A
      \end{cases}
 \]
 and~$w := \Xi(v, \, v^\star)$, as before.
 We know that~$v^\star\in\SBV(\Omega, \, \EE)$,
 hence~$\Phi(v^\star)\in\SBV(\Omega, \, \EE)$
 by the chain rule, and that~$A$ has finite perimeter;
 therefore, we conclude that~$v\in\SBV(\Omega, \, \EE)$
 (see e.g.~\cite[Theorem~3.84]{AmbrosioFuscoPallara}).
 By construction, $v$ is a lifting of~$u$ and, reasoning as before,
 we deduce that~$v$ satisfies~\eqref{genlifting}.
\end{proof}

Keeping Lemma~\ref{lemma:liftbd} into account,
we can reformulate Theorem~\ref{th:minconnrel}
as follows:

\begin{prop} \label{prop:minconn-finiteper}
 For any set~$A\subseteq\Omega$ of finite perimeter, there holds
 \begin{equation} \label{minconnrelineq}
  \H^1(\L_A)
  \geq \mathbb{L}^\Omega(a_1, \, \ldots, \, a_p).
 \end{equation}
 The equality holds if and only if there exists
 another minimal connection~$\{S_1, \, \ldots, \, S_r\}$
 relative to~$\Omega$ such that
 \begin{equation} \label{minconnreleq}
  \L_A = \bigcup_{h=1}^r S_h.
 \end{equation}
\end{prop}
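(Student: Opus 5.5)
The strategy is to show that $\L_A$ contains, up to $\H^1$-null sets, a family of curves that can be straightened into a connection for $\{a_1, \ldots, a_p\}$ relative to~$\Omega$, and then to compare lengths. Heuristically, $\L_A$ is a $1$-chain modulo two whose boundary, relative to $\partial\Omega$, is exactly $\{a_1, \ldots, a_p\}$. The segments $L_j$ contribute boundary $\{a_i\}$ plus some points of $\partial\Omega$, while $\Omega\cap\partial^*A$ is a union of cycles cut by $\partial\Omega$. The whole difficulty is to make this precise without currents, and for that I would use the structure theorem of~\cite{ACMM}.

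\textbf{Step 1: structure of $\Omega\cap\partial^*A$.} Since $\Omega$ is Lipschitz, $A$ has finite perimeter in $\R^2$. By~\cite{ACMM}, $\partial^*A$ coincides $\mod \H^1$ with a countable union of rectifiable Jordan curves $C_k$. These curves are pairwise disjoint up to $\H^1$-null sets and satisfy $\sum_k \H^1(C_k) = P(A, \R^2) < +\infty$. Every such $C_k$ that meets $\Omega$ splits into countably many arcs. Each arc either is a closed loop inside $\Omega$, or has both endpoints on $\partial\Omega$ and is otherwise contained in~$\Omega$.

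\textbf{Step 2: extracting paths from the $a_i$.} Start from each $a_i$ along its unique segment $L_j$ (unique by Lemma~\ref{lemma:mindisj}). Follow $L_j$ until it meets an arc of some $C_k$. There, the symmetric difference switches the path onto that arc, and we follow the arc until it meets another segment, and so on. The parity/homomorphism structure behind Lemma~\ref{lemma:boundary} says that each crossing of $\partial^*A$ with a segment removes or adds length in pairs. I would make this rigorous by a finite approximation. First, truncate to finitely many curves $C_k$; the tail has arbitrarily small length, so it only affects the estimate by $\eps$. Second, use Fubini to perturb so that the intersections $C_k\cap L_j$ are finite. After these two reductions, the union $\L_A\cup\partial\Omega$ becomes a finite graph. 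In this graph the odd-degree vertices lying in~$\Omega$ are exactly $a_1, \ldots, a_p$, and the parity at intersection points is preserved by the symmetric difference. Standard graph theory (Euler's argument on odd-degree vertices) then yields edge-disjoint paths $\gamma_1, \ldots, \gamma_s \subseteq \L_A$. Each path starts at some $a_i$ and ends either at another $a_{i'}$ or at a point of $\partial\Omega$, and every $a_i$ is an endpoint an odd number of times. I expect this step to be the main obstacle. The difficulty is to carry out the graph argument rigorously with only rectifiable Jordan curves, and to pass to the limit in the truncation while retaining the equality information.

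\textbf{Step 3: straightening and the inequality.} Replace each path $\gamma_m$ by the segment joining its endpoints, which has length at most $\H^1(\gamma_m)$. If that segment leaves $\overline{\Omega}$, replace it instead by the two segments from its endpoints to the nearest points of the segment lying on $\partial\Omega$, exactly as in the proof of Lemma~\ref{lemma:mindisj}. This does not increase the total length. The family obtained in this way is a connection relative to $\Omega$. Since the paths are edge-disjoint in $\L_A$, we get
\[
\H^1(\L_A) \geq \sum_m \H^1(\gamma_m) \geq \mathbb{L}^\Omega(a_1, \ldots, a_p),
\]
which proves~\eqref{minconnrelineq}.

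\textbf{Step 4: the equality case.} If equality holds, every inequality in Step 3 is an equality. Then $\L_A\setminus\bigcup_m\gamma_m$ is $\H^1$-null, which rules out any extra loops. Each $\gamma_m$ is a straight segment lying in $\overline{\Omega}$, since the chopping in Step 3 strictly decreases length whenever it is needed. The resulting segments form a minimal connection $\{S_1, \ldots, S_r\}$ with $\L_A = \bigcup_h S_h \mod \H^1$, which is~\eqref{minconnreleq}. Conversely, if~\eqref{minconnreleq} holds, then $\H^1(\L_A) = \mathbb{L}^\Omega$, because the segments of a minimal connection are pairwise disjoint by Lemma~\ref{lemma:mindisj}.
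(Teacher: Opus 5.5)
Your Steps~1 and~3, the converse in Step~4, and the Euler-trail idea are all sound; the paper uses the same trail decomposition in Lemma~\ref{lemma:onlyessentials}. The gap is in how you reach a finite graph in Step~2. Truncating to finitely many $C_k$ is harmless for the inequality. The ``Fubini perturbation'' that makes $C_k\cap L_j$ finite, however, is not length-controlled, because $\L_A$ is a symmetric difference. Wherever $\partial^*A$ runs along a segment $L_j$, that portion cancels; wherever it runs along $\partial\Omega$, it does not count at all. A perturbation that makes the intersections with the $L_j$ finite undoes the cancellation and pushes the boundary portions into $\Omega$. The perturbed set can therefore exceed $\H^1(\L_A)$ by as much as $2\H^1(\partial^*A\cap\bigcup_j L_j)+\H^1(\partial^*A\cap\partial\Omega)$, and a lower bound for it says nothing about $\H^1(\L_A)$.

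This is not a pathology; it is the typical equality case. Let $a_1,\dots,a_4$ be the vertices of a unit square deep inside a large disc $\Omega$, let $\{L_1,L_2\}$ be the horizontal sides, and let $A$ be the square. Then $\L_A$ is the pair of vertical sides, of length $2=\mathbb{L}^\Omega$. A generic small translate of $\partial A$, which is what a Fubini argument produces, gives a set of length $6$. For the same reason Step~4 cannot work as written: you read off rigidity from equalities in a chain that, in your scheme, holds only up to $\eps$ and only for a modified configuration.

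The missing idea is a reduction that never moves $\partial^*A$ off $\bigcup_j L_j\cup\partial\Omega$ and yields a \emph{strict} gain whenever something superfluous is removed. The paper works directly with the arcs of $\partial^*A$, i.e.\ the closures of the images of the components of $\gamma_i^{-1}(\Omega\setminus\bigcup_j L_j)$. Its graph edges are these arcs together with the components of $\bigcup_j L_j\setminus(\Omega\cap\partial^*A)$, so overlaps are handled automatically, with no perturbation.

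Essential arcs have length at least $\eta_0>0$, hence there are finitely many of them (Lemma~\ref{lemma:essentials}). Every non-essential arc $\Gamma$ is eliminated by replacing $A$ with $A\triangle E$, where $E$ is the Jordan domain bounded by $\Gamma$ and a piece of $\partial\Omega\cup\bigcup_j L_j$. The infinitely many replacements converge in $\BV$ thanks to the isoperimetric inequality (Lemma~\ref{lemma:nonessentials}).

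The decisive estimate is $\H^1(S(\Gamma))<\H^1(\Gamma)$. In case~(iv) it uses the \emph{minimality} of $\{L_1,\dots,L_q\}$, not only its disjointness. It makes each removal strictly decrease $\H^1(\L_A)$, and that strictness is exactly what gives the ``only if'' direction of the equality statement.
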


Theorem~\ref{th:minconnrel} is an immediate consequence
of Proposition~\ref{prop:minconn-finiteper}
and Lemma~\ref{lemma:liftbd}.
We will deduce Proposition~\ref{prop:minconn-finiteper}
from a result by Ambrosio, Caselles, Masnou and Morel~\cite{ACMM},
which describes the boundary of planar sets of finite perimeter.
We recall that a set~$C\subseteq\R^2$
is called a Jordan curve if there exists a continuous
function~$\gamma\colon [a, \, b]\to\R^2$
that is injective in~$[a, \, b)$, satisfies~$\gamma(a) = \gamma(b)$
and is such that~$C = \gamma([a, \, b])$.
A rectifiable Jordan curve
is a Jordan curve~$C$ with~$\H^1(C) <+\infty$.
A rectifiable Jordan curve admits a Lipschitz
parametrization (see e.g.~\cite[Lemma~3]{ACMM}).

\begin{theorem}
 \label{th:ACMM}
 Let~$E$ be a subset of~$\R^2$ of finite perimeter.
 Then, there is a decomposition 
 \[
  \partial^* E = \bigcup_{i=0}^{+\infty} C_i
  \mod \H^1
 \]
 into countably many rectifiable Jordan
 curves~$\{C_i\}_{i\geq 0}$ such that 
 \begin{equation} \label{basicallydisj} 
  P(E, \, \R^2) = \sum_{i = 0}^{+\infty} \H^1(C_i).
 \end{equation}
\end{theorem}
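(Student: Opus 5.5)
The plan is to follow the strategy of Ambrosio, Caselles, Masnou and Morel. First reduce to simpler building blocks, namely \emph{indecomposable} and then \emph{simple} sets, and only at the end deal with Jordan curves. Since $\partial^*E = \partial^*(\R^2\setminus E)$ and, by the isoperimetric inequality, exactly one of $E$, $\R^2\setminus E$ has finite measure (if $E$ has finite perimeter and is not null or co-null), I will assume throughout that $\abs{E}<+\infty$.

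\emph{Step 1 (indecomposable components).} Call a set $F$ of finite perimeter decomposable if $F = F_1\cup F_2$ with $\abs{F_1}, \abs{F_2}>0$, $\abs{F_1\cap F_2}=0$ and $P(F) = P(F_1)+P(F_2)$; otherwise call it indecomposable. I will show that $E$ splits into at most countably many indecomposable pieces $E_k$ with $P(E) = \sum_k P(E_k)$ and $\partial^*E = \bigcup_k \partial^*E_k \mod\H^1$, the essential boundaries being pairwise $\H^1$-disjoint. The construction is by exhaustion: among all ``splittings'' of $E$, pick pieces of maximal area. The isoperimetric inequality $\abs{F}^{1/2}\lesssim P(F)$ gives a lower bound on the area of each nontrivial piece in terms of perimeter, which prevents infinite splitting of a piece. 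The additivity $P(F_1\cup F_2)=P(F_1)+P(F_2)$ is equivalent to $\H^1(\partial^*F_1\cap\partial^*F_2)=0$, and this is what gives the $\H^1$-disjointness of the boundaries.

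\emph{Step 2 (saturation and holes).} For an indecomposable $F$ with finite measure, decompose $\R^2\setminus F$ into indecomposable components as in Step~1. Exactly one of them has infinite measure; the others (the ``holes'' $Y_m$) have finite measure. Define the saturation $\mathrm{sat}(F) := F\cup\bigcup_m Y_m$. I will check that $\mathrm{sat}(F)$ is \emph{simple}, i.e.\ both it and its complement are indecomposable, and that
\[
 P(F) = P(\mathrm{sat}(F)) + \sum_m P(Y_m), \qquad
 \partial^*F = \partial^*\mathrm{sat}(F)\cup\bigcup_m\partial^* Y_m \mod\H^1 .
\]
Each hole $Y_m$ is itself indecomposable and has finite measure. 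So the procedure can be repeated on the holes, the components of their complements, and so on. The perimeters add at every stage and the total is bounded by $P(E)$, so the recursion produces countably many simple sets $S_i$ with $\sum_i P(S_i) = P(E)$ and $\partial^* E = \bigcup_i\partial^*S_i \mod\H^1$.

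\emph{Step 3 (simple sets are bounded by Jordan curves).} This is the main obstacle. I must show that for a simple set $S$ of finite measure there is a rectifiable Jordan curve $C$ with $C = \partial^*S \mod\H^1$ and $\H^1(C) = P(S)$. My first attempt will be an approximation argument.
\begin{itemize}
 \item Approximate $S$ in $L^1$ by smooth (or polygonal) sets $S_n$ with $P(S_n)\to P(S)$, using a mollification and coarea argument.
 \item Keep in each $S_n$ only the component carrying most of the area, and fill its holes. Simplicity of $S$ together with $P(S_n)\to P(S)$ should make the discarded parts asymptotically negligible in both area and perimeter, again via the isoperimetric inequality.
 \item The boundaries of the resulting sets are Jordan curves of equibounded length. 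Parametrise them by constant speed on $[0,1]$; by Arzelà--Ascoli a subsequence converges uniformly to a closed Lipschitz curve $\gamma$.
\end{itemize}
It then remains to prove that $\gamma$ can be taken injective, that its image equals $\partial^*S$ up to $\H^1$-null sets, and that $\H^1(\gamma) = P(S)$. Lower semicontinuity of length and of perimeter, together with the fact that the winding number of $\gamma$ equals $\chi_S$ a.e., should give $\H^1(\gamma)\leq P(S)$ and $\partial^* S \subseteq \gamma \mod\H^1$. The delicate point is injectivity. The limit curve may a priori retrace arcs or pinch. Retraced arcs carry zero net winding, so they would cost length without contributing perimeter, contradicting $\H^1(\gamma)\le P(S)$. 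Pinching at a point, on the other hand, would split $S$ or its complement into two pieces of positive measure with additive perimeter, contradicting simplicity. Making this rigorous may require cutting $\gamma$ at its self-intersection points and rearranging the loops, and this is where I expect most of the work.

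\emph{Step 4 (conclusion).} Set $C_i$ to be the Jordan curve associated with $S_i$ in Step~3. Then
\[
 \partial^*E = \bigcup_i C_i \mod\H^1
 \qquad\text{and}\qquad
 \sum_i \H^1(C_i) = \sum_i P(S_i) = P(E,\,\R^2),
\]
which is exactly the statement of Theorem~\ref{th:ACMM}.
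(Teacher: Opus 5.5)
\paragraph{Comparison with the paper.} The paper does not prove this statement; it quotes it from Corollary~1 of~\cite{ACMM}. Your Steps~1, 2 and~4 retrace the architecture of that paper: indecomposable components, holes and saturation, simple sets.

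\paragraph{The gap in Step~3.} Step~3 is the real content of the theorem, and you leave it open. Grant the reduction to Jordan approximants $S_n'$ with $S_n'\to S$ in $L^1$ and $P(S_n')\to P(S)$; this is routine, via lower semicontinuity and indecomposability of $S$ and of $\R^2\setminus S$. Your argument then gives a closed Lipschitz curve $\gamma$ whose winding number equals $\chi_S$ a.e. Since $\abs{D\chi_S}$ is bounded by the length measure of $\gamma$, you get $P(S)\leq\mathrm{length}(\gamma)\leq\lim_n P(S_n')=P(S)$. Because the $\gamma_n$ have constant speed, $\gamma$ has constant speed. Moreover $\H^1(\gamma([0,1]))=\mathrm{length}(\gamma)$, so $\gamma$ has multiplicity one $\H^1$-a.e.\ and $\gamma([0,1])=\partial^*S\mod\H^1$.

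That is all your ``retraced arcs'' remark yields. It does not exclude double points $\gamma(s)=\gamma(t)$, $s\neq t$, lying on an $\H^1$-null set, and these are exactly what separates a Jordan curve from a closed curve of multiplicity one. Two examples satisfy every identity above and are uniform limits of Jordan curves whose interiors converge with convergence of perimeters:
\begin{itemize}
 \item a figure-eight made of two externally tangent circles;
 \item a circle together with an internally tangent smaller circle run in the opposite direction.
\end{itemize}
So injectivity can only come from simplicity of $S$. Your sentence ``pinching would split $S$ or its complement'' is precisely the statement to be proved. ``Cutting $\gamma$ at its self-intersections and rearranging the loops'' is not yet a plan, since the set of double points may a priori be infinite, even uncountable.

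\paragraph{The missing idea.} Treat one double point at a time, and use simplicity through the following fact. If $w\in\BV(\R^2,\Z)$ and $\abs{Dw}$ is carried by $\partial^*S$, then $w$ is a.e.\ constant on $S$ and on $\R^2\setminus S$. Indeed, jumps of $w$ have size at least one, so the jump set of $w$ meets the density-one points of $S$ in an $\H^1$-null set. Hence the sets $S\cap\{w=k\}$ split $S$ with additive perimeter, and indecomposability of $S$ leaves only one of them with positive measure. The same argument applies to the complement.

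Now suppose $\gamma(s)=\gamma(t)$ with $0\leq s<t<1$. Split $\gamma$ into the closed loops $\gamma_1:=\gamma_{|[s,t]}$ and $\gamma_2$ (the rest); both have positive length because $\gamma$ has constant speed. Their winding numbers $u_1$, $u_2$ are compactly supported, integer-valued $\BV$ functions with $u_1+u_2=\chi_S$ a.e., and
\[
 P(S)\leq\abs{Du_1}(\R^2)+\abs{Du_2}(\R^2)
 \leq\mathrm{length}(\gamma_1)+\mathrm{length}(\gamma_2)=P(S).
\]
Hence $\abs{Du_1}+\abs{Du_2}=\abs{D\chi_S}$ as measures, and $\abs{Du_i}(\R^2)=\mathrm{length}(\gamma_i)>0$ for each $i$. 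By the fact above and compactness of the supports, $u_i=c_i\chi_S$ with $c_i\in\Z$, $c_1+c_2=1$ and $\abs{c_1}+\abs{c_2}=1$. So one of the $u_i$ vanishes, a contradiction. Therefore $\gamma$ is injective on $[0,1)$, which completes Step~3.

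\paragraph{Two smaller points.}
\begin{itemize}
 \item In Step~1, the isoperimetric inequality bounds area by perimeter, not the other way round. Nothing prevents infinite splitting, since $E$ may have infinitely many indecomposable components. Existence comes from a maximisation over perimeter-additive partitions plus $\BV$ compactness, with $\sum_i\abs{E_i}^{1/2}\leq C\,P(E)$ supplying tightness.
 \item In Step~2, ``perimeters add and are bounded by $P(E)$'' does not show that an infinite recursion loses no perimeter. However, no recursion is needed: the holes of an indecomposable set are already simple, and so is $\mathrm{sat}(F)$. Both follow from a countable version of the lemma that the union of a.e.-disjoint indecomposable sets whose essential boundaries share a set of positive $\H^1$-measure is indecomposable.
\end{itemize}
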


Theorem~\ref{th:ACMM} is an immediate consequence of 
of Corollary~1 in~\cite{ACMM}. The latter actually provides 
a much more detailed description of the boundary of planar sets of 
finite perimeter, but the statement we give here is sufficient 
for our purposes.
We can proceed to the proof of 
Proposition~\ref{prop:minconn-finiteper}.

\subsection{Proof of Proposition~\ref{prop:minconn-finiteper}}
\label{sect:minconn-finiteper}

Let~$A\subseteq\Omega$ be a set finite perimeter.
By applying Theorem~\ref{th:ACMM}, we decompose
$\partial^* A$ as a countable union of rectifiable Jordan curves,
$\partial^* A = \cup_{i = 0}^{+\infty} C_i \mod\H^1$,
that satisfy~\eqref{basicallydisj}.
Let~$\gamma_i\colon [0, \, 1]\to\overline{\Omega}$
be Lipschitz parametrisations of~$C_i$, that are
injective on~$[0, \, 1)$ and satisfy~$\gamma_i(0) = \gamma_i(1)$.
For each~$i$, the set~$(\gamma_i)^{-1}\left(\Omega\setminus
\bigcup_{j=1}^q L_j\right)$ is open, hence it can be
written as a countable, disjoint
union of relatively open intervals, say
\[
 (\gamma_i)^{-1}\left(\Omega\setminus\bigcup_{j=1}^q L_j\right)
 = \bigcup_{k = 0}^{+\infty} I_{ik}
\]
Here each~$I_{ik}$ with~$k\geq 1$ is a (possibly empty) open interval,
of the form~$I_{ik} = (a_{ik}, \, b_{ik})$
for some~$0 \leq a_{ik} \leq  b_{ik} \leq 1$,
while~$I_{i0}$ is either empty, the whole
interval~$I_{i0} = [0, \, 1]$, or it has the form
$I^+_{i0} = [0, \, a_{i0}) \cup (b_{i0}, \, 1]$
with~$0 < a_{i0} \leq b_{i0} < 1$.

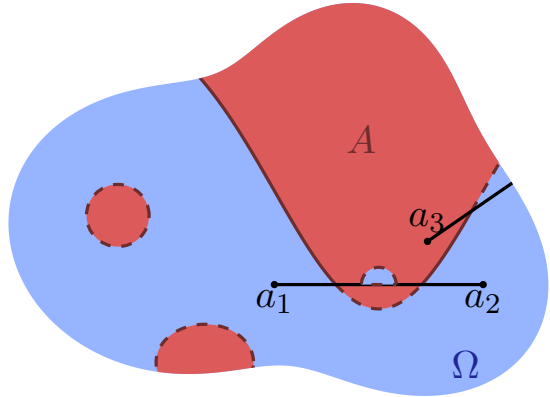
\begin{figure}[t]
  \centering
\resizebox{.35\textheight}{!}{%
\begin{tikzpicture}[scale=1.7, line width = 0.9pt]

\definecolor{lightblue}{RGB}{150,180,255}
\colorlet{darkblue}{lightblue!35!blue!60!black}
\definecolor{myred}{RGB}{220,90,90}
\colorlet{darkred}{myred!50!black}
\definecolor{mypink}{RGB}{245,170,170}
\definecolor{mygray}{RGB}{150,150,150}
\color{darkgray}{mygray!50!black}

\begin{scope}


\def\R{1.25 + 0.25*sin(2*\x) - 0.18*cos(3*\x) + 0.07*sin(4*\x)}

\clip plot[domain=0:360, samples=400]
({-(\R)*sin(\x + 45)}, {(\R)*cos(\x + 45)});

\fill[lightblue] (-2,-2) rectangle (2,2);


\def\BellPath{
  plot[fill=red, domain=-1:1.3, samples=200, variable=\x]
({\x}, {1.2 - 2^(-2*(\x - 0.5)*(\x - 0.5) + 0.8)}) 
}

\fill[myred] \BellPath -- (1.3, 1.2) -- (1.3, 1.3) -- (-1, 1.3) -- cycle;

\draw[darkred] 
 plot[fill=red, domain=-1:0.253, samples=100, variable=\x]
({\x}, {1.2 - 2^(-2*(\x - 0.5)*(\x - 0.5) + 0.8)});
\draw[darkred, dashed] 
 plot[fill=red, domain=0.253:0.747, samples=100, variable=\x]
({\x}, {1.2 - 2^(-2*(\x - 0.5)*(\x - 0.5) + 0.8)});
\draw[darkred] 
 plot[fill=red, domain=0.747:1.033, samples=100, variable=\x]
({\x}, {1.2 - 2^(-2*(\x - 0.5)*(\x - 0.5) + 0.8)});
\draw[darkred, dashed] 
 plot[fill=red, domain=1.033:1.3, samples=20, variable=\x]
({\x}, {1.2 - 2^(-2*(\x - 0.5)*(\x - 0.5) + 0.8)});


\draw[black] (0.78,-0.15) -- (1.5,0.35);

\draw[black] (-0.1,-0.4) -- (1.1,-0.4);

\fill[black] (-0.1,-0.4) circle (0.6pt);
\fill[black] (1.1,-0.4) circle (0.6pt);
\fill[black] (0.78,-0.15) circle (0.6pt);
\node[below] at (-0.1,-0.35) {{\color{black} $a_1$}};
\node[below] at (1.1,-0.35) {{\color{black} $a_2$}};
\node[above] at (0.78,-0.18) {{\color{black} $a_3$}};


\draw[darkred, dashed, fill=lightblue] 
 (0.6,-0.4) arc (0:180:0.1) -- cycle;


\draw[darkred, dashed, line width = 0.9, fill=myred] (-1,0) circle (0.18);


\draw[darkred, dashed, line width = 0.9, fill=myred]
plot[smooth cycle,domain=0:360,samples=200]
({-0.5+0.28*cos(\x)},
 {-0.85+0.22*sin(\x)});

\node[below] at (0.4,0.6) {{\color{darkred} $A$}};
\node[below] at (1,-0.7) {{\color{darkblue} $\Omega$}};

\end{scope}
\end{tikzpicture}
}

  \caption{A set~$A$ (in red) and its arcs.
  The essential arcs are indicated by solid lines, 
  the non-essential ones by dashed lines.}
  \label{fig:nonessentials}
 \end{figure}

For the purposes of this proof, we will call
\emph{arcs} of~$\partial^* A$ all the sets of the form 
$\Gamma_{ik} := \gamma_i\left(\overline{I^+_{ik}}\right)$
for some indices~$i$, $k$,
where~$\overline{I_{ik}}$ denotes the closure of~$I_{ik}$.
An arc can be closed curve or it may have endpoints, which necessarily
belong to~$\partial\Omega\cup\bigcup_{j=1}^q L_j$.
{We will say that an arc of~$\partial^*A$ is \emph{essential}
if and only if it is not a closed curve and 
\begin{enumerate}[label=(\alph*)]
 \item either its endpoints lie on \emph{distinct} 
 line segments~$L_j\neq L_k$, 
 \item or one of them lies on~$\partial\Omega$ and the other
 one lies on a line segment~$L_j$ that does \emph{not}
 touch~$\partial\Omega$.
\end{enumerate}
A non-essential arc of~$\partial^*A$ satisfies
one of the following conditions:
\begin{enumerate}[label=(\roman*)]
 \item either is a closed curve; or
 \item it is not a closed curve and both 
 its endpoints lie on~$\partial\Omega$; or
 \item its endpoints lie on the \emph{same} 
 line segment~$L_j$; or
 \item one of its endpoints lies on~$\partial\Omega$
 and the other one lies on a line segment~$L_j$
 that intersects~$\partial\Omega$.
\end{enumerate}
Figure~\ref{fig:nonessentials} provides an example.

\begin{lemma} \label{lemma:essentials}
 For any~$A\subseteq\Omega$ of finite perimeter,
 $\partial^*A$ has only finitely many essential arcs.
\end{lemma}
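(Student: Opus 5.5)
The plan is a length argument: every essential arc has length bounded below by a fixed positive constant, and the sum of all arc lengths is finite.

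The finiteness of the total length follows from \eqref{basicallydisj} and the injectivity of each~$\gamma_i$ on~$[0,1)$. Distinct arcs $\Gamma_{ik}$ of the same curve~$C_i$ overlap at most at endpoints, so
\[
 \sum_{i,k}\H^1(\Gamma_{ik}) \le \sum_{i}\H^1(C_i) = P(A,\,\R^2) < +\infty .
\]
Since $A\subseteq\Omega$ and $\Omega$ is Lipschitz, $A$ has finite perimeter in~$\R^2$, as recalled above.

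Next we need a uniform positive lower bound~$\delta$ on the length of every essential arc. An arc is a connected curve, so its length is at least the Euclidean distance between its endpoints, and it suffices to bound that distance from below. By Lemma~\ref{lemma:mindisj}, the segments $L_1,\dots,L_q$ are pairwise disjoint compact sets, so
\[
 \delta_1 := \min_{j\neq k}\dist(L_j,\,L_k) > 0 .
\]
This handles case~(a), in which the endpoints lie on distinct segments. For case~(b), let $L_j$ be a segment that does not touch~$\partial\Omega$. Then $L_j$ is a compact subset of the open set~$\Omega$, so
\[
 \delta_2 := \min\{\dist(L_j,\,\partial\Omega)\colon L_j\cap\partial\Omega=\emptyset\} > 0 .
\]
Set $\delta := \min(\delta_1,\,\delta_2)$. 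Then every essential arc has length at least~$\delta$, and their number is at most $\delta^{-1}P(A,\R^2)<+\infty$.

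The one delicate point is to check that this bookkeeping is legitimate: arcs must genuinely be subarcs of the $C_i$ with disjoint interiors, and their endpoints must actually lie on $\partial\Omega\cup\bigcup_j L_j$. The endpoints of each interval~$I_{ik}$ are boundary points of the open set $\gamma_i^{-1}(\Omega\setminus\bigcup_j L_j)$. Since $\gamma_i$ takes values in~$\overline\Omega$, the images of these endpoints lie in $\partial\Omega\cup\bigcup_jL_j$. Degenerate arcs with $a_{ik}=b_{ik}$ have coinciding endpoints, so they fall in case~(iii), case~(ii), or are points, and in any case they are non-essential. With this checked, the estimate above concludes the proof.
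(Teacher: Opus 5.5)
Your proof is correct and follows the paper's argument. The paper also takes $\eta_0$ as the minimum of the pairwise distances between the segments (which are disjoint by Lemma~\ref{lemma:mindisj}) and the distances from segments contained in $\Omega$ to $\partial\Omega$, observes that every essential arc has length at least $\eta_0$, and concludes from the finite total length given by \eqref{basicallydisj}; your explicit bound $\sum_{i,k}\H^1(\Gamma_{ik})\le P(A,\R^2)$, via injectivity of $\gamma_i$ on $[0,1)$, simply makes that last step more precise.
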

\begin{proof}
 Let
 \begin{equation*} 
  \begin{split}
   \eta_0 &:=
   \min\left\{\dist_{\R^2}(L_j, \, L_k)\colon j\neq k\right\}
   \wedge \min\left\{\dist_{\R^2}(L_j, \, \partial\Omega)\colon
    j \textrm{ such that } L_j\subseteq\Omega \right\} \! ,
  \end{split}
 \end{equation*}
 where as usual~$a\wedge b := \min(a, \, b)$
 and~$\dist_{\R^2}$ denotes the Euclidean distance in~$\R^2$. 
 Notice that either there are no essential arcs or
 the number~$\eta_0$ is strictly positive, because
 the line segments~$L_j$ in a minimal connection
 are pairwise disjoint (Lemma~\ref{lemma:mindisj}). 
 All essential arcs of~$\partial^*A$ must have length 
 greater than or equal to~$\eta_0$. On the other hand, 
 since~$A$ has finite perimeter and distinct arcs have
 negligible intersection (because of~\eqref{basicallydisj}
 in Theorem~\ref{th:ACMM}),
 there are only finitely arcs with length greater
 than or equal to~$\eta_0$.
\end{proof}

We first consider the case~$\partial^*A$ only contains
essential arcs, which can be addressed by combinatorial arguments.}
Given two sets~$E$, $F\subseteq\R^2$,
we write~$E\subseteq F\mod\H^1$ as a 
synonym for~$\H^1(E\setminus F) = 0$.

\begin{lemma} \label{lemma:onlyessentials}
 Let~$A\subseteq\Omega$ be a set of finite perimeter
 such that all the arcs of~$\partial^*A$ are essential. 
 Then, there exists finitely many Lipschitz curves
 $f_1$, \ldots, $f_k\colon [0,\, 1]\to\overline{\Omega}$
 that satisfy the following properties:
 \begin{enumerate}[label=(\roman*)]
  \item for each~$h$, either~$f_h(0) = a_i$
  and~$f_h(1) = a_j$ for some~$i\neq j$ or~$f_h(0) = a_i$
  for some~$i$ and~$f_h(1)\in\partial\Omega$;
  \item for each~$i$, there is an
  \emph{odd} number of indices~$h$ such
  that~$a_i$ is an endpoint of the curve~$f_h$;
  \item we have
  \[
   \bigcup_{h=1}^k f_h([0, \, 1]) \subseteq \L_A \mod\H^1,
  \]
  with~$\L_A$ as in~\eqref{L_A};
  \item we have
  \[
   \H^1\!\left(\bigcup_{h=1}^k f_h([0, \, 1])\right)
   = \sum_{h=1}^k \H^1(f_h([0, \, 1)).
  \]
 \end{enumerate}
\end{lemma}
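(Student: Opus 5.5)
The plan is to build a finite graph $\mathcal{G}$ from the essential arcs and the pieces into which they cut the segments $L_j$, and then extract the curves $f_h$ by a parity (mod two) argument on this graph. By Lemma~\ref{lemma:essentials} there are finitely many arcs. Since all arcs are essential, there are no closed arcs, and each $C_i$ meets $\bigcup_j L_j\cup\partial\Omega$. Hence, modulo $\H^1$, $\Omega\cap\partial^*A$ is the union of finitely many arcs $\Gamma_1,\dots,\Gamma_m$. Each $\Gamma_s$ is a Lipschitz curve whose interior lies in $\Omega\setminus\bigcup_j L_j$. Its endpoints lie either on two distinct segments, or one on $\partial\Omega$ and one on a segment not touching $\partial\Omega$.

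The vertices of $\mathcal{G}$ are the points $a_i$, the endpoints of the arcs, and the endpoints of the $L_j$ lying on $\partial\Omega$; all points of $\partial\Omega$ are then collapsed into a single ``boundary vertex'' $\partial$. Each $L_j$ is subdivided by the arc endpoints lying on it into finitely many subsegments. We give each subsegment weight $1$, and add $1$ to the weight of a subsegment each time it is covered by an arc; this is irrelevant, since arcs are disjoint from $\bigcup L_j$ except at endpoints. The edges of $\mathcal{G}$ are the arcs $\Gamma_s$ together with the subsegments. Because arc interiors avoid $\bigcup L_j$, we have $\L_A=\bigcup_s\Gamma_s\cup\bigcup_j L_j$ modulo $\H^1$, i.e.\ the union of all edges. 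The edges have pairwise $\H^1$-negligible intersections: arcs by \eqref{basicallydisj}, segments by Lemma~\ref{lemma:mindisj}, and arcs versus segments because arcs meet segments only at endpoints.

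Next comes the degree count. Consider a vertex $x$ on the relative interior of some $L_j$ that is an arc endpoint, at a junction where an odd number of arcs terminate. The two adjacent subsegments are replaced, along with the segment, by the symmetric difference. Here I expect the main obstacle: one must check that $\L_A$ is the symmetric difference, and that the parts of $L_j$ near $x$ on either side really belong to $\L_A$. Since arc interiors are disjoint from $L_j$, the essential boundary does not contain subsegments of $L_j$. Hence $\L_A=(\Omega\cap\partial^*A)\cup\bigcup_jL_j$ modulo $\H^1$, and every subsegment is an edge.

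For the parity we use that every vertex other than the $a_i$ and $\partial$ has even degree. This holds because $\partial^*A$ is a union of Jordan curves, each passing through $x$ entering and leaving. The parts of a Jordan curve through $x$ are arcs, so the arc-degree at $x$ is even, while the subsegment-degree at an interior point of $L_j$ is $2$. At $a_i$, the Jordan curves avoid $a_i$ for $\H^1$-generic purposes; if a curve passes through $a_i$ it contributes $2$. The segment contributes $1$, since $a_i$ is an endpoint of exactly one $L_j$ by Lemma~\ref{lemma:mindisj}. So the degree at $a_i$ is odd.

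Finally, a standard graph-theoretic fact applies: in a finite multigraph, the edge set can be decomposed into edge-disjoint trails (paths) pairing up the odd-degree vertices, plus cycles. The degree of $\partial$ is automatically consistent, since the number of odd vertices in $\mathcal{G}$ must be even. We discard the cycles and keep the trails. Each trail connects two of $a_1,\dots,a_p$, or some $a_i$ with $\partial$, i.e.\ with a point of $\partial\Omega$. We cut trails passing through $\partial$ in the middle into two curves ending on $\partial\Omega$, and drop any trail from $\partial$ to $\partial$. Each $a_i$ has odd degree, so it is the endpoint of an odd number of resulting trails, which gives (ii). Concatenating the edge parametrisations yields Lipschitz $f_h$ with (i). Edge-disjointness plus the $\H^1$-negligible overlaps of edges gives (iv), and (iii) holds because all edges lie in $\L_A$. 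The delicate points are the treatment of arc endpoints that coincide with some $a_i$ or with boundary endpoints of segments, and the measure-theoretic identification of $\L_A$ with the union of edges. In particular, the decomposition of Theorem~\ref{th:ACMM} only holds mod $\H^1$, so care is needed in counting how Jordan curves pass through vertices.
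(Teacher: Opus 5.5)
There is a genuine gap, and it is exactly at the point you flagged as the main obstacle. You resolve it by claiming that, since arc interiors avoid $\bigcup_j L_j$, the essential boundary contains no subsegment of any $L_j$. That claim is false. The arcs are only the pieces of the Jordan curves $C_i$ lying off $\partial\Omega\cup\bigcup_j L_j$, and a curve $C_i$ may run along a segment $L_j$ for positive length, even when every arc is essential.

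For a concrete example, take $a_1=(0,0)$, $a_2=(1,0)$, $a_3=(0,2)$, $a_4=(1,2)$ far from $\partial\Omega$. The minimal connection is $L_1=[a_1,a_2]$, $L_2=[a_3,a_4]$. Let $A=(s,t)\times(0,2)$ with $0<s<t<1$.
\begin{itemize}
 \item The only arcs are the two vertical sides, and both are essential.
 \item However, $\partial^*A\supseteq[s,t]\times\{0,2\}$, and the symmetric difference in \eqref{L_A} \emph{removes} these pieces.
 \item Hence $\L_A=\bigcup_s\Gamma_s\cup\bigl(\bigcup_j L_j\setminus\partial^*A\bigr)$. It is not $\bigcup_s\Gamma_s\cup\bigcup_jL_j$, and $\Omega\cap\partial^*A$ is not just the union of the arcs.
\end{itemize}

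This breaks your construction in two places.
\begin{itemize}
 \item In your graph the edges $[s,t]\times\{0\}$ and $[s,t]\times\{2\}$ are not contained in $\L_A$, so any trail using them violates (iii).
 \item Your parity claim, that ``the parts of a Jordan curve through $x$ are arcs, so the arc-degree is even'', fails. A curve can reach $x\in L_j$ along an arc and leave along $L_j$. In the example, the four vertices $(s,0),(t,0),(s,2),(t,2)$ have degree $3$. The trail decomposition can then produce trails such as $(s,0)\to(t,0)$, whose endpoints lie outside $\{a_1,\dots,a_p\}\cup\partial\Omega$, violating (i).
 \item Your count at $a_i$ has the same defect: a curve may arrive at $a_i$ along an arc and leave along the unique segment ending there, contributing $1$ rather than $2$.
\end{itemize}
Your ``weight'' bookkeeping was the right instinct, but it must count coverage by $\partial^*A$, not by arcs. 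A subsegment covered once gets weight $0$ and must be dropped.

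The repair is what the paper does: take as segment-edges the connected components of $\bigcup_jL_j\setminus(\Omega\cap\partial^*A)$, not all subsegments. Then the parity count works as follows.
\begin{itemize}
 \item Each Jordan curve through a vertex $x$ contributes two half-edges at $x$. Each is either an arc or a stretch of $\partial^*A$ along a segment.
 \item Let $\alpha$ be the number of arcs at $x$ and $\beta$ the number of such stretches. Then $\alpha+\beta$ is even.
 \item Each stretch along $L_j$ adjacent to $x$ deletes exactly one segment-edge at $x$.
 \item At a relative interior point of $L_j$ the degree is therefore $\alpha+2-\beta\equiv 0 \pmod 2$.
 \item At $a_i$, which is an endpoint of exactly one $L_j$ by Lemma~\ref{lemma:mindisj}, the degree is $\alpha+1-\beta\equiv 1 \pmod 2$.
\end{itemize}
With this correction, the rest of your argument goes through and is essentially the paper's proof. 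That includes the trail decomposition, collapsing (or not collapsing) $\partial\Omega$ to a single vertex, cutting at the boundary, and discarding cycles and boundary-to-boundary trails. Edges are then contained in $\L_A$, which gives (iii), and they have $\H^1$-negligible overlaps, which gives (iv).
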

\begin{proof}
 Since~$\Omega\cap\partial^*A$ has finitely many components
 (due to Lemma~\ref{lemma:essentials}), there must be
 finitely many components of~$(\bigcup_{j=1}^q L_j)\setminus
 (\Omega\cap\partial^*A)$.
 These properties allow us to reduce the problem
 to a combinatorial one.
 Let~$\mathscr{G}$ be the finite
 (non-oriented, multi-)graph defined as follows:
 \begin{itemize}
  \item the vertices of~$\mathscr{G}$ are the endpoints
  of all the segments~$L_1$, \ldots, $L_q$ and the endpoints
  of all the arcs in~$\partial^* A$;
  \item the edges of~$\mathscr{G}$
  are all (essential) arcs of~$\partial^* A$
  and all the connected components of
  $(\bigcup_{j=1}^q L_j)\setminus (\Omega\cap\partial^*A)$.
 \end{itemize}
 There might be multiple edges in~$\mathscr{G}$
 that join the same pair of vertices. However, there are
 no loops in~$\mathscr{G}$ (i.e., no edges that join a vertex
 with itself), because no essential arc 
 of~$\partial^* A$ is a closed curve.

 We are interested in the degree of vertices in~$\mathscr{G}$
 --- that is, the number of edges of~$\mathscr{G}$ incident to
 a given vertex. By construction,
 \begin{equation} \label{minconnrel-odddegree}
   a_1, \, \ldots, \, a_p \textrm{ have odd degree in } \mathscr{G}.
 \end{equation}
 Indeed,
 \begin{equation} \label{minconn-degree}
  \begin{split}
   \textrm{degree of } a_i
   &\equiv \left(\textrm{number of line segments~$L_j$ that
   have~$a_i$ as an endpoint}\right) \\
   &\quad + \left(\textrm{number of components
   of~$\partial^* A\phantom{_j}\setminus\{a_i\}$
   that have~$a_i$ as an endpoint}\right) \hspace{-1mm} \mod 2.
  \end{split}
 \end{equation}
 It may happen that a segment~$L_j$
 has~$a_i$ as an endpoint, yet it is not incident to~$a_i$
 in the graph~$\mathscr{G}$, because~$L_j\cap\partial^* A$
 contains a subsegment adjacent to~$a_i$.
 Should this happen, though, both terms at the
 right-hand side of~\eqref{minconn-degree}
 would decrease by one, because~$L_j\cap\partial^* A$
 is not an arc either, and~\eqref{minconn-degree}
 would remain valid anyway.
 Now, by definition of connection relative to~$\Omega$,
 the number of line segments~$L_j$ that have~$a_i$
 as an endpoint is odd. On the other hand,
 the components of~$\partial^* A$ are closed curves
 and any component that passes through~$a_i$
 must intersect~$\partial\Omega\cup\bigcup_{j=1}^q L_j$
 at another point at least, for otherwise there would be
 an arc of~$\partial^* A$ that is a closed curve.
 Therefore, there is an even number of components
 of~$\partial^* A\setminus\{a_i\}$ that have~$a_i$ as an endpoint,
 and~\eqref{minconnrel-odddegree} follows.
 By a similar reasoning, we can also prove that
 \begin{equation} \label{minconnrel-evendegree}
   \textrm{vertices of } \mathscr{G}
   \textrm{ that do not belong to }
   \{a_1, \, \ldots, \, a_p\}\cup\partial\Omega
   \textrm{ have even degree.}
 \end{equation}

 By classical methods in graph theory
 (Fleury's algorithm, see, e.g.~\cite[Theorem~6.5]{Wilson-Graph}),
 there exists a partition of the set of edges of~$\mathscr{G}$
 into disjoint subsets~$\EE_1$, \ldots $\EE_k$,
 where each~$\EE_h$ is a trail (i.e., a sequence
 of distinct edges such that each edge is adjacent to the next one)
 that connects two distinct vertices of~$\mathscr{G}$ with odd degree.
 In particular, the endpoints of~$\EE_h$
 must belong to~$\{a_1, \, \ldots, \, a_p\}\cup\partial\Omega$,
 due to~\eqref{minconnrel-evendegree}.
 Moreover, any~$a_i$ is an endpoint for an odd number
 of trails~$\EE_h$, because of the
 property~\eqref{minconnrel-odddegree}.
 Finally, we disregard all the trails that connect
 two points of~$\partial\Omega$ and define~$f_h\colon [0, \, 1]\to\R^2$
 as a Lipschitz parametrisation of the remaining trails~$\EE_h$.
 Such maps~$f_h$ satisfy all the desired properties.
\end{proof}

{We now modify the set~$A$ in such a way to remove
non-essential arcs from~$\partial^*A$ and show that
this procedure reduces the length of~$\L_A$
(defined by~\eqref{L_A}).
As a preliminary remark, we observe that~$\partial\Omega$
is path-connected, because we have assumed that~$\Omega$
is simply connected and of class~$C^1$.
Moreover,} the geodesic distance~$\dist_{\partial\Omega}(x, \, y)$
between two points~$x$, $y$ in~$\partial\Omega$
--- that is, the length of the shortest curve
contained in~$\partial\Omega$ that connects~$x$ with~$y$ ---
satisfies
\begin{equation} \label{geodist}
 \dist_{\partial\Omega}(x, \, y)
 \leq C_{\Omega} \abs{x - y}
\end{equation}
for some uniform constant~$C_\Omega > 1$
that depends on~$\Omega$ only.
{This inequality is completely analogous
to~\eqref{geodistN}, \eqref{geodistE}.
In the proof of the nex lemma, we will also invoke
the isoperimetric inequality: for any bounded set
of finite perimeter~$E\subseteq\R^2$, there holds
\begin{equation} \label{isoperimetric}
 \abs{E} \leq C \H^1(\partial^* E)^2,
\end{equation}
for some uniform constant~$C$ (see e.g.~\cite[Theorem~3.46]{AmbrosioFuscoPallara}).
Here~$\abs{E}$ is the Lebesgue measure of~$E$.
As is well known, balls are the optimal sets for this inequality,
so the inequality is satisfied with the sharp constant~$C = \frac{1}{4\pi}$,
but this will play no r\^ole in our arguments.}

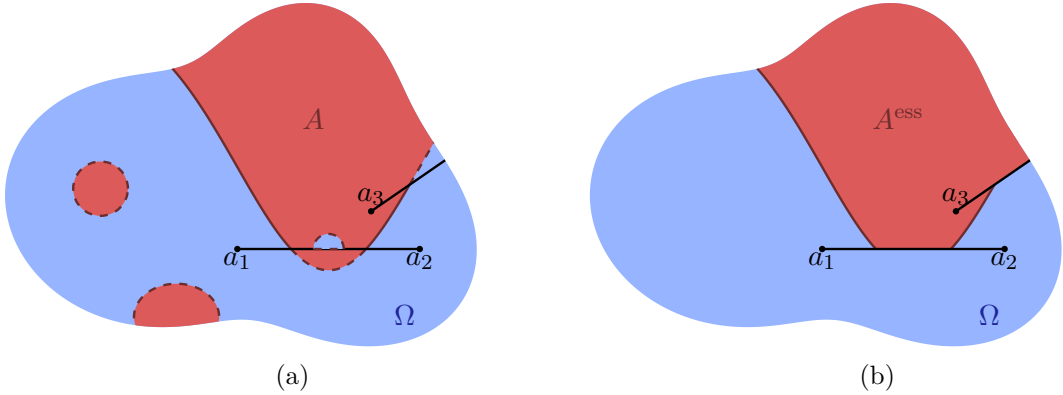
\begin{figure}[t]
 \centering
 \begin{subfigure}{.47\textwidth}
\begin{tikzpicture}[scale=2, line width = 0.9pt]

\definecolor{lightblue}{RGB}{150,180,255}
\colorlet{darkblue}{lightblue!35!blue!60!black}
\definecolor{myred}{RGB}{220,90,90}
\colorlet{darkred}{myred!50!black}
\definecolor{mypink}{RGB}{245,170,170}
\definecolor{mygray}{RGB}{150,150,150}
\color{darkgray}{mygray!50!black}

\begin{scope}


\def\R{1.25 + 0.25*sin(2*\x) - 0.18*cos(3*\x) + 0.07*sin(4*\x)}

\clip plot[domain=0:360, samples=400]
({-(\R)*sin(\x + 45)}, {(\R)*cos(\x + 45)});

\fill[lightblue] (-2,-2) rectangle (2,2);


\def\BellPath{
  plot[fill=red, domain=-1:1.3, samples=200, variable=\x]
({\x}, {1.2 - 2^(-2*(\x - 0.5)*(\x - 0.5) + 0.8)}) 
}

\fill[myred] \BellPath -- (1.3, 1.2) -- (1.3, 1.3) -- (-1, 1.3) -- cycle;

\draw[darkred] 
 plot[fill=red, domain=-1:0.253, samples=100, variable=\x]
({\x}, {1.2 - 2^(-2*(\x - 0.5)*(\x - 0.5) + 0.8)});
\draw[darkred, dashed] 
 plot[fill=red, domain=0.253:0.747, samples=100, variable=\x]
({\x}, {1.2 - 2^(-2*(\x - 0.5)*(\x - 0.5) + 0.8)});
\draw[darkred] 
 plot[fill=red, domain=0.747:1.033, samples=100, variable=\x]
({\x}, {1.2 - 2^(-2*(\x - 0.5)*(\x - 0.5) + 0.8)});
\draw[darkred, dashed] 
 plot[fill=red, domain=1.033:1.3, samples=20, variable=\x]
({\x}, {1.2 - 2^(-2*(\x - 0.5)*(\x - 0.5) + 0.8)});


\draw[black] (0.78,-0.15) -- (1.5,0.35);

\draw[black] (-0.1,-0.4) -- (1.1,-0.4);

\fill[black] (-0.1,-0.4) circle (0.6pt);
\fill[black] (1.1,-0.4) circle (0.6pt);
\fill[black] (0.78,-0.15) circle (0.6pt);
\node[below] at (-0.1,-0.35) {{\color{black} $a_1$}};
\node[below] at (1.1,-0.35) {{\color{black} $a_2$}};
\node[above] at (0.78,-0.18) {{\color{black} $a_3$}};


\draw[darkred, dashed, fill=lightblue] 
 (0.6,-0.4) arc (0:180:0.1) -- cycle;


\draw[darkred, dashed, line width = 0.9, fill=myred] (-1,0) circle (0.18);


\draw[darkred, dashed, line width = 0.9, fill=myred]
plot[smooth cycle,domain=0:360,samples=200]
({-0.5+0.28*cos(\x)},
 {-0.85+0.22*sin(\x)});

\node[below] at (0.4,0.6) {{\color{darkred} $A$}};
\node[below] at (1,-0.7) {{\color{darkblue} $\Omega$}};

\end{scope}
\end{tikzpicture}
  \caption{}
  \label{fig:reduce1}
 \end{subfigure}
 \begin{subfigure}{.47\textwidth}
\begin{tikzpicture}[scale=2, line width = 0.9pt]

\definecolor{lightblue}{RGB}{150,180,255}
\colorlet{darkblue}{lightblue!35!blue!60!black}
\definecolor{myred}{RGB}{220,90,90}
\colorlet{darkred}{myred!50!black}
\definecolor{mypink}{RGB}{245,170,170}
\definecolor{mygray}{RGB}{150,150,150}
\color{darkgray}{mygray!50!black}

\begin{scope}


\def\R{1.25 + 0.25*sin(2*\x) - 0.18*cos(3*\x) + 0.07*sin(4*\x)}

\clip plot[domain=0:360, samples=400]
({-(\R)*sin(\x + 45)}, {(\R)*cos(\x + 45)});

\fill[lightblue] (-2,-2) rectangle (2,2);


\def\BellPath{
  plot[fill=red, domain=-1:1.3, samples=200, variable=\x]
({\x}, {1.2 - 2^(-2*(\x - 0.5)*(\x - 0.5) + 0.8)}) 
}

\fill[myred] \BellPath -- (1.3, 1.2) -- (1.3, 1.3) -- (-1, 1.3) -- cycle;

\draw[darkred] 
 plot[fill=red, domain=-1:0.253, samples=100, variable=\x]
({\x}, {1.2 - 2^(-2*(\x - 0.5)*(\x - 0.5) + 0.8)});
\draw[lightblue, fill=lightblue] 
 plot[fill=red, domain=0.253:0.747, samples=100, variable=\x]
({\x}, {1.2 - 2^(-2*(\x - 0.5)*(\x - 0.5) + 0.8)});
\draw[darkred] 
 plot[fill=red, domain=0.747:1.033, samples=100, variable=\x]
({\x}, {1.2 - 2^(-2*(\x - 0.5)*(\x - 0.5) + 0.8)});

\fill[myred] (0.78,-0.15) -- (1.5,0.35) -- (1.5, 1) -- cycle;


\draw[black] (0.78,-0.15) -- (1.5,0.35);

\draw[black] (-0.1,-0.4) -- (1.1,-0.4);

\fill[black] (-0.1,-0.4) circle (0.6pt);
\fill[black] (1.1,-0.4) circle (0.6pt);
\fill[black] (0.78,-0.15) circle (0.6pt);
\node[below] at (-0.1,-0.35) {{\color{black} $a_1$}};
\node[below] at (1.1,-0.35) {{\color{black} $a_2$}};
\node[above] at (0.78,-0.18) {{\color{black} $a_3$}};

\node[below] at (0.4,0.6) {{\color{darkred} $A^{\mathrm{ess}}$}};
\node[below] at (1,-0.7) {{\color{darkblue} $\Omega$}};

\end{scope}
\end{tikzpicture}

  \caption{}
  \label{fig:reduce2}
 \end{subfigure}
  \caption{Modifying the set~$A$ to remove
  non-essential arcs of~$\partial^*A$, as 
  described in Lemma~\ref{lemma:mindisj}. 
  Left: the original set~$A$; right:
  the modified set~$A^{\mathrm{ess}}$.}
  \label{fig:reduce}
 \end{figure}

\begin{lemma} \label{lemma:nonessentials}
 Let~$A\subseteq\Omega$ be a set of finite perimeter
 such that~$\partial^* A$ has at least one non-essential arc.
 Then, there exists a set of finite perimeter~$A^\ess\subseteq\Omega$
 such that {the arcs of~$\partial^* A^\ess$
 are exactly the essential arcs of~$\partial^* A$} and
 \begin{equation} \label{baba}
  \H^1(\mathscr{L}_{A^\ess}) < \H^1(\mathscr{L}_A).
 \end{equation}
\end{lemma}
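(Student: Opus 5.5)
The idea is to cut $\Omega$ along the segments $L_j$ and along $\partial^*A$, and then recolour the resulting pieces. Set $\Sigma := \bigcup_j L_j$. We want to remove each non-essential arc by taking the symmetric difference of $A$ with a suitable set $D\subseteq\Omega$ of finite perimeter. Lemma~\ref{lemma:boundary} gives $\partial^*(A\triangle D) = \partial^*A\triangle\partial^*D \mod\H^1$. So if we choose $\partial^* D$ to be, modulo $\H^1$, a non-essential arc $\Gamma$ together with a piece $\sigma$ of $\Sigma\cup\partial\Omega$ joining its endpoints, then the effect on $\L_A$ is as follows. The arc $\Gamma$ disappears from $\L_A$. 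The piece $\sigma\cap\Sigma$ is toggled, and $\sigma\cap\partial\Omega$ does not count, since $\L_A$ only sees $\Omega\cap\partial^*A$. Hence
\[
 \H^1(\L_{A\triangle D}) \leq \H^1(\L_A) - \H^1(\Gamma) + \H^1(\sigma\cap\Sigma).
\]
This is where non-essentiality is used. In case (i), take $D$ to be the bounded region enclosed by the Jordan curve $\Gamma$; it has finite perimeter by the isoperimetric inequality~\eqref{isoperimetric}, and $\sigma = \emptyset$. In case (ii), close $\Gamma$ with a geodesic arc of $\partial\Omega$; it contributes nothing to $\L$, and its length is controlled by~\eqref{geodist}. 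In case (iii), close $\Gamma$ with the subsegment of $L_j$ between its two endpoints; this subsegment is strictly shorter than $\Gamma$ because the straight segment is the shortest path, and equality would force $\Gamma$ to lie in $L_j$, which is impossible for an arc. In case (iv), close $\Gamma$ with the part of $L_j$ from the endpoint of $\Gamma$ to the point where $L_j$ meets $\partial\Omega$, and then with a path along $\partial\Omega$. Here strict decrease should follow from Lemma~\ref{lemma:minbd}: $L_j$ is orthogonal to $\partial\Omega$ and realises the minimal distance from $a_i$ to $\partial\Omega$, so the portion of $L_j$ from any of its points to the boundary is no longer than any curve from that point to $\partial\Omega$. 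In every case the net change is $\leq -\H^1(\Gamma) + \H^1(\sigma\cap\Sigma) \leq 0$, and it is strictly negative at least for the first modification.

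Next I would treat all non-essential arcs at once rather than one by one, since there may be countably many. I would define $A^\ess$ directly. The essential arcs, of which there are finitely many by Lemma~\ref{lemma:essentials}, together with $\Sigma$ and $\partial\Omega$, cut $\Omega$ into finitely many regions. On each region I would choose the colour ($\in A^\ess$ or not) consistently with the parity prescribed by the essential arcs: crossing an essential arc flips the colour, crossing $\Sigma$ does not. I would then check that $A^\ess$ equals $A\triangle D$, where $D$ is the symmetric difference of the countably many sets $D_\Gamma$ attached to the non-essential arcs $\Gamma$. The series converges in $L^1$, and by lower semicontinuity the perimeter of $D$ is bounded by $\sum_\Gamma \bigl(\H^1(\Gamma)+C_\Omega\H^1(\Gamma)\bigr)\lesssim P(A)$. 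By construction the arcs of $\partial^*A^\ess$ are exactly the essential arcs of $A$, so the length estimate sums up to give~\eqref{baba}.

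The main obstacle is making the choice of $\sigma$ and $D$ well defined and consistent. Regions might be non-simply connected, and arcs of types (iii) and (iv) can nest or cross over the same pieces of $\Sigma$, so the toggled pieces $\sigma\cap\Sigma$ could overlap and add back length. I would handle this with a mod-two bookkeeping. Each point of $\Sigma$ is toggled a number of times equal to the parity of the number of non-essential arcs enclosing it, and every toggled piece is bounded by at least one arc that is strictly longer than it. A counting argument, carried out region by region and using Theorem~\ref{th:ACMM} and the fact that distinct arcs overlap only on $\H^1$-null sets, should then give $\H^1(\L_{A^\ess}\setminus\L_A)\leq\H^1(\L_A\setminus\L_{A^\ess})$, with strict inequality coming from at least one non-essential arc.
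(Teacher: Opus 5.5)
Your construction is the paper's: close each non-essential arc $\Gamma$ into a Jordan curve $J(\Gamma)$ using pieces of $\Sigma\cup\partial\Omega$. Then toggle $A$ by the enclosed region via Lemma~\ref{lemma:boundary}, and pass to the limit over countably many arcs using $\H^1(J(\Gamma))\le C\H^1(\Gamma)$ and the isoperimetric inequality.

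Two parts of your plan should be dropped.

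First, the direct colouring of $A^{\ess}$ does not exist in general. Take a point in the relative interior of some $L_j$ where exactly one essential arc ends. A small circle around it crosses that arc once and $L_j$ twice, so the rule ``crossing $\Sigma$ never flips'' is contradictory. In fact $\partial^*A^{\ess}$ must contain pieces of $\Sigma$, and that is exactly where $\L$ can gain length. The regions need not even be finitely many, since two essential arcs may touch at infinitely many points. Instead, set $A^{\ess}:=A\triangle D$, with $D$ the BV-limit of the partial toggles. Its arcs can then be read off from $J(\Gamma)\setminus\Gamma\subseteq\partial\Omega\cup\Sigma$ and the $\H^1$-disjointness of distinct arcs.

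Second, overlaps on $\Sigma$ are no obstacle. Each $\Gamma_n$ lies in $\L_A$ and is toggled exactly once. Modulo $\H^1$, $\L_{A^{\ess}}\setminus\L_A$ is contained in $\bigcup_n(\sigma_n\cap\Sigma)$. So subadditivity alone gives
\[
 \H^1(\L_{A^{\ess}})\le\H^1(\L_A)-\sum_n\bigl(\H^1(\Gamma_n)-\H^1(\sigma_n\cap\Sigma)\bigr),
\]
which is what the paper uses; no mod-two counting is needed.

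The genuine gap is strictness. In case (iv) your argument gives only
\[
 \abs{x-z}\le\dist(x,\partial\Omega)\le\abs{x-y}\le\H^1(\Gamma).
\]
It is cleaner than the paper's contradiction argument, but ``strictly negative at least for the first modification'' is unsupported if every non-essential arc is of type (iv).

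Equality forces $\Gamma=[x,y]$ and $\abs{a_i-y}=\abs{a_i-x}+\abs{x-y}=\dist(a_i,\partial\Omega)$. If $x\neq a_i$, this puts $y$ on the ray from $a_i$ through $z$, so $y=z$ and $\Gamma\subseteq L_j$, which is impossible. If $x=a_i$, however, $\Gamma$ may be a segment from $a_i$ to a second nearest boundary point $y\neq z$, and then \eqref{baba} is false.

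Here is an example. Take $\Omega$ the unit disc, $p=1$, $a_1=0$, $L_1=[0,(1,0)]$, and $A=\{x\in\Omega\colon x_1>0,\,x_2>0\}$.
\begin{itemize}
 \item The only arc of $\partial^*A$ is $[0,(0,1)]$, which is non-essential of type (iv), and $\H^1(\L_A)=1$.
 \item Since $A$ has no essential arcs, $A^{\ess}$ must have none, so $\H^1(\partial^*A^{\ess}\cap(\Omega\setminus L_1))=0$.
 \item Because $\Omega\setminus L_1$ is connected, $A^{\ess}$ is a.e.\ $\emptyset$ or $\Omega$, whence $\H^1(\L_{A^{\ess}})=1$.
\end{itemize}

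The paper's Step~2 misses exactly this configuration: its collinearity argument says nothing when $x=a_i$. So the gap cannot be closed as stated. What holds is the non-strict inequality, which is strict unless every non-essential arc is a segment $[a_i,y]$ with $\abs{a_i-y}=\dist(a_i,\partial\Omega)$. The equality case of Proposition~\ref{prop:minconn-finiteper}, which deduces from equality that all arcs are essential, needs a separate argument for such arcs. In the example $\L_A$ is itself a minimal connection, so the example is consistent with Theorem~\ref{th:minconnrel}.
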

\begin{proof}
 {The strategy of the proof is to
 add or remove suitable sets to~$A$, one for each non-essential
 arc of~$\partial^*A$, so as to delete all the non-essential arcs,
 as illustrated in Figure~\ref{fig:reduce}.
 We split the proof into several steps.}

 \begin{step}
  {Let~$\Gamma$ be a non-essential arc of~$\partial^*A$.
  We extend~$\Gamma$ to a Jordan curve~$J(\Gamma)\supseteq\Gamma$
  by distinguishing several cases, as follows.
  \begin{enumerate}[label=(\roman*)]
   \item If~$\Gamma$ is a closed curve,
   then we set~$J(\Gamma) := \Gamma$.

   \item If~$\Gamma$ is a non-closed curve
   with endpoints~$x$, $y$ on~$\partial\Omega$,
   we define~$J(\Gamma)$ by concatenating~$\Gamma$
   with a Lipschitz, one-to-one curve
   $J_{\partial\Omega}(\Gamma)\subseteq\partial\Omega$
   that joins~$y$ to~$x$ in such a way that
   \begin{equation} \label{lengthpartialOmega1}
    \H^1(J_{\partial\Omega}(\Gamma))
    \leq C_{\Omega} \abs{x - y}
    \leq C_{\Omega} \H^1(\Gamma).
   \end{equation}
   Such a curve~$J_{\partial\Omega}(\Gamma)$ exists,
   because of~\eqref{geodist}.

   \item If~$\Gamma$ is a non-closed curve
   with endpoints~$x$, $y$ on the same line segment~$L_j$,
   we first define~$S(\Gamma)$ as the line segment
   with the same endpoints~$x$, $y$ as~$\Gamma$,
   then define~$J(\Gamma)$ by concatenating~$\Gamma$
   with~$S(\Gamma)$ (and travelling along~$S(\Gamma)$
   in the opposite direction).

   \item If~$\Gamma$ is a non-closed curve
   with endpoints~$x\in L_j$, $y\in\partial\Omega$,
   then~$L_j$ itself must have an endpoint on~$\partial\Omega$,
   otherwise~$\Gamma$ would be an essential arc.
   Say, for instance, $\{z\} = L_j\cap\partial\Omega$.
   In this case, we first define $S(\Gamma)$ as the line segment
   from~$x$ to~$z$. Then, we take a Lipschitz,
   one-to-one curve~$J_{\partial\Omega}(\Gamma)$ from~$y$ to~$z$
   such that
   \begin{equation} \label{lengthpartialOmega2}
    \H^1(J_{\partial\Omega}(\Gamma))
    \leq C_{\Omega} \abs{y - z}
    \leq C_{\Omega} \left(\H^1(\Gamma) + \H^1(S(\Gamma))\right) \! .
   \end{equation}
   (Such a curve exists, thanks to~\eqref{geodist}.)
   Finally we define a Jordan curve~$J(\Gamma)$ by concatenating~$\Gamma$
   with~$J_{\partial\Omega}(\Gamma)$ and~$S(\Gamma)$,
   with suitable orientations.
  \end{enumerate}
  In all cases (i)--(iv) above, we have
  \begin{equation} \label{nonewbd}
   J(\Gamma) \setminus \Gamma
   \subseteq \partial\Omega\cup\bigcup_{j=1}^q L_j
  \end{equation}
  by construction.}
 \end{step}

 \begin{step}
  {In cases~(iii) and~(iv), we claim that
  \begin{equation} \label{nonessentials0}
   \H^1(S(\Gamma)) < \H^1(\Gamma).
  \end{equation}
  In the case of~(iii),
  the inequality~\eqref{nonessentials0} is immediate
  because straight lines minimise length.
  In the case of~(iv), suppose (towards a contradiction)
  that~$\abs{x - y} \leq \abs{x - z}$,
  where~$x$, $y$ are the endpoints of~$S(\Gamma)$
  and~$z\in L_j\cap \partial\Omega$, as above.
  Let~$a_i$ be the endpoint of~$L_j$ that is not on~$\partial\Omega$.
  The triangle inequality implies
  \[
   \abs{a_i - y}
   \leq \abs{a_i - x} + \abs{x - y}
   \leq \abs{a_i - x} + \abs{x - z} = \abs{a_i - z}\!.
  \]
  In fact, the equality $\abs{a_i - y} = \abs{a_i - z}$
  cannot hold, for otherwise all the previous inequalities
  would be equalities; we would then have
  $\abs{x - y} = \abs{x - z}$
  and the points~$a_i$, $x$, $y$, $z$,
  would all be aligned, so that
  \[
   \abs{a_i - y}
   = \abs{\abs{a_i - x} - \abs{x - y}}
   < \abs{a_i - x} + \abs{x - z} = \abs{a_i - z}\!,
  \]
  a contradiction.
  Therefore, we must have $\abs{a_i - y} < \abs{a_i - z}$.
  But then, by replacing~$L_j$
  with the straight line segment~$S_j$ of endpoints~$a_i$,
  $y$ (or a smaller line segment
  $S_j^\prime\subseteq S_j\cap\overline{\Omega}$, if necessary),
  we obtain a connection for~$\{a_1, \, \ldots, \, a_p\}$
  relative to~$\Omega$ with total length smaller than
  the one of~$\{L_1, \, \ldots, \, L_q\}$.
  This is again a contradiction, which completes
  the proof of~\eqref{nonessentials0}.
  Combining~\eqref{nonessentials0} with~\eqref{lengthpartialOmega1}
  and~\eqref{lengthpartialOmega2}, we obtain
  \begin{equation} \label{lengthJ}
   \H^1(J(\Gamma)) \leq C \H^1(\Gamma)
  \end{equation}
  in all cases~(i)--(iv), for some uniform constant~$C$
  that depends only on~$\Omega$.}
 \end{step}

 \begin{step}
  {Let~$\{\Gamma_n\}_{n\geq 0}$
  be an enumeration of all non-essential arcs of~$\partial^*A$,
  $J_n := J(\Gamma_n)$, and~$E_n$ be the interior of~$J_n$.
  By construction, we have~$J_n\subseteq\overline\Omega$,
  so~$E_n\subseteq\overline\Omega$.
  We define a sequence of sets~$\{A_n\}_{n\geq 0}$ as follows:
  \[
   \begin{cases}
    A_{0} := A \\
    A_{n} := A_{n-1}\triangle E_{n-1} \qquad \textrm{for all } n\geq 1.
   \end{cases}
  \]
  Lemma~\ref{lemma:boundary} implies that all the sets~$A_n$
  have finite perimeter and
  $\partial^* A_n = \partial^* A_{n-1}\triangle J_{n-1}$ $\mod\H^1$,
  for any~$n\geq 1$.
  Taking~\eqref{nonewbd} into account and reasoning by induction,
  we deduce that
  \begin{equation} \label{nonewbd-n}
   \partial^* A_n\cap \left(\Omega\setminus\bigcup_{j=1}^q L_j\right)
   = \partial^* A \setminus \left(\bigcup_{k=0}^{n-1} \Gamma_k\right)
   \mod\H^1
  \end{equation}
  for all~$n\geq 1$. The proof of~\eqref{nonewbd-n}
  depends on the fact that distinct arcs~$\Gamma_n$, $\Gamma_m$
  have~$\H^1$-null intersection, which is a consequence
  of~\eqref{basicallydisj} in Theorem~\ref{th:ACMM}.
  The same statement, combined with~\eqref{lengthJ},
  implies that
  \begin{equation} \label{CauchyBV1}
   \sum_{n=0}^{+\infty} \H^1(J_n)
   \leq C \sum_{n=0}^{+\infty} \H^1(\Gamma_n) < +\infty,
  \end{equation}
  and the isoperimetric inequality~\eqref{isoperimetric} implies
  \begin{equation} \label{CauchyBV2}
   \sum_{n=0}^{+\infty} \abs{E_n}
   \leq C \sum_{n=0}^{+\infty} \H^1(\Gamma_n)^2 < +\infty.
  \end{equation}
  Let~$\chi_n$ be the indicator function of~$A_n$
  (i.e.~$\chi_n := 1$ on~$A_n$ and~$\chi_n := 0$ otherwise).
  The inequalities~\eqref{CauchyBV1} and~\eqref{CauchyBV2}
  prove that~$\{\chi_n\}_{n\geq 0}$ is a Cauchy sequence in~$\BV(\R^2)$,
  so it converges to a limit~$\chi$. Since BV-convergence implies convergence
  pointwise almost everywhere at least along a subsequence,
  the limit function satisfies~$\chi(x)\in\{0, \, 1\}$ for~a.e.~$x\in\R^2$,
  so (possibly after modification on a negligible set)
  it is itself the indicator function of a set
  of finite perimeter, which we call~$A^\ess$.
  By passing to the limit~$n\to+\infty$ in~\eqref{nonewbd-n},
  we see that~$\partial^*A^\ess \cap (\Omega\setminus\cup_{j=1}^q L_j)$
  consists exactly of the essential arcs of~$\partial^*A$.
  In this procedure, we may have also deleted
  some portions of~$\partial^* A \cap \cup_{j=1}^q L_j$
  of the form~$S(\Gamma_n)$ for some~$n$
  and it might happen that the
  length of~$\cup_{j=1}^q L_j \setminus \partial^* A^\ess$
  is \emph{strictly larger} than the length of
  $\cup_{j=1}^q L_j \setminus \partial^* A$.
  However, this possible increase of length
  is more than compensated by the removal of all non-essential
  arcs, because of~\eqref{nonessentials0}. Therefore,
  the inequality~\eqref{baba} follows.}
  \qedhere
 \end{step}
\end{proof} 

\begin{proof}[Proof of Proposition~\ref{prop:minconn-finiteper}]
 Let~$A\subseteq\Omega$ be a set of finite perimeter.
 Combining Lemma~\ref{lemma:onlyessentials}
 with Lemma~\ref{lemma:nonessentials},
 we find Lipschitz curves
 $f_1$, \ldots, $f_k\colon [0,\ , 1]\to\overline{\Omega}$
 that satisfy the following properties:
 \begin{enumerate}[label=(\roman*)]
  \item for each~$h$, either~$f_h(0) = a_i$
  and~$f_h(1) = a_j$ for some~$i\neq j$ or~$f_h(0) = a_i$
  for some~$i$ and~$f_h(1)\in\partial\Omega$;
  \item for each~$i$, there is an
  \emph{odd} number of indices~$h$ such
  that~$a_i$ is an endpoint of the curve~$f_h$;
  \item we have
  \begin{equation*}
   \sum_{h=1}^k \H^1\!\left(f_h([0, \, 1])\right)
   \leq \H^1(\L_A)
  \end{equation*}
  and the inequality is strict unless
  all the arcs of~$\partial^* A$ are essential.
  (We recall that~$\L_A$ is defined by~\eqref{L_A}.)
 \end{enumerate}
 Let~$S_h$ be the (closed) line segment of
 endpoints~$f_h(0)$ and~$f_h(1)$. If~$S_h\subseteq\overline{\Omega}$
 for all~$h$, then $\{S_1, \, \ldots, \, S_k\}$ is a connection
 for~$\{a_1, \, \ldots, \, a_p\}$ relative to~$\Omega$.
 If a line segment~$S_h$ is not entirely contained in~$\overline{\Omega}$
 and, say, $S_h$ connects a point~$a_i$ with a point of~$\partial\Omega$,
 then we discard~$S_h$ and replace it with~$S_h^\prime\subseteq S_h$,
 the connected component of~$S_h\cap\overline{\Omega}$ that contains~$a_i$.
 In a similar way, if~$S_h\not\subseteq\overline{\Omega}$
 and~$S_h$ connects~$a_i$ with~$a_j$, we replace~$S_h$
 with the connected components~$S_h^\prime$, $S_h^{\prime\prime}$
 of~$S_h\cap\overline{\Omega}$ that contain~$a_i$, $a_j$.
 Either way, after relabelling the segments,
 we construct a connection~$\{S_1, \, \ldots, \, S_r\}$
 for~$\{a_1, \, \ldots, \, a_p\}$ such that
 \begin{equation} \label{blah}
  \H^1(\L_A)
  \geq \sum_{h=1}^k \H^1\!\left(f_h([0, \, 1])\right)
  \geq \sum_{h=1}^r \H^1(S_h).
 \end{equation}
 Then, the inequality~\eqref{minconnrelineq}
 follows immediately from the definition~\eqref{minconnrel}
 of~$\mathbb{L}^\Omega$. If~\eqref{blah}
 reduces to an equality, then all the arcs of~$\partial^* A$
 are essential and Lemma~\ref{lemma:onlyessentials} gives
 \begin{equation} \label{minconncurves}
   \bigcup_{h=1}^k f_h\left([0, \, 1]\right)
   \subseteq \L_A \mod \H^1.
 \end{equation}
 Moreover, equality in~\eqref{blah} implies that
 each~$f_h$ parametrises a straight line segment
 and the inclusion in~\eqref{minconncurves} is actually
 an equality, modulo~$\H^1$-null sets.
 In other words, equality in~\eqref{blah} implies that~$\partial^* A$
 has the form~\eqref{minconnreleq}. This completes the proof.
\end{proof}

\section{Relative minimal connections in a model of ferronematics}
\label{sect:minimisers}

{In this section, we apply Theorem~\ref{th:minconnrel},
along with the arguments from~\cite{CanevariMajumdarStroffoliniWang},
to study the asymptotic behaviour of minimisers~$(\Q^\star_\eps, \, \M^\star_\eps)$
to the energy functional~\eqref{eq:Feps}, subject to ``mixed''
boundary conditions, as in~\eqref{bcbis}, \eqref{hp:bcbis}.
In particular, we will prove Theorem~\ref{th:ferronematics}.
The arguments largely follows the ones in~\cite{CanevariMajumdarStroffoliniWang}.
We only focus on the points which require some adaptation.}


\subsection{Preliminary estimates and notation}

{Several properties of the potential~$f_\eps$
defined by~\eqref{eq:potential}
are listed in~\cite[Lemma~3.1]{CanevariMajumdarStroffoliniWang}.
Here, we only mention that for all sufficiently small~$\eps > 0$
and all~$(\Q, \, \M)\in\Sz\times\R^2$, there holds
\begin{equation} \label{potential_comparison}
  \frac{1}{\eps^2} f_\eps(\Q, \, \M)
  \geq \frac{1}{8\eps^2}(\abs{\Q}^2 - 1)^2
   - \beta^2\abs{\M}^4
\end{equation}
(see Equation~(3.2) in~\cite{CanevariMajumdarStroffoliniWang}).}

\paragraph{The Euler-Lagrange equations and the maximum principle.}

Minimisers~$(\Q^\star_\eps, \, \M^\star_\eps)$ of the functional~\eqref{eq:Feps}
are solutions to the Euler-Lagrange equations
\begin{align}
  -&\Delta\Q^\star_\eps + \dfrac{1}{\eps^2}(\abs{\Q^\star_\eps}^2 - 1)\Q^\star_\eps
  - \dfrac{\beta}{\eps}\left(\M^\star_\eps\otimes\M^\star_\eps
  - \dfrac{\abs{\M^\star_\eps}^2}{2}\I\right)  = 0  \label{EL-Q} \\
  -&\Delta\M^\star_\eps + \dfrac{1}{\eps^2}(\abs{\M^\star_\eps}^2 - 1)\M^\star_\eps
  - \dfrac{2\beta}{\eps^2}\Q^\star_\eps\M^\star_\eps = 0. \label{EL-M}
\end{align}
The following is a standard consequence of
the equations~\eqref{EL-Q}--\eqref{EL-M}.

\begin{lemma} \label{lemma:max}
 The maps~$\Q^\star_\eps$, $\M^\star_\eps$ are smooth inside~$\Omega$
 and of class~$C^2$ up to the boundary of~$\Omega$.
 Moreover, there exist 
 a constant~$C_\beta$, depending only on $\beta$,
 and a constant $C_{\beta,\Omega}$, depending only on $\beta$ and $\Omega$,
 such that
 \begin{align}
  \norm{\Q^\star_\eps}_{L^\infty(\Omega)}
   + \norm{\M^\star_\eps}_{L^\infty(\Omega)} &\leq {C_\beta} \label{max-QM} \\
  \norm{\nabla\Q^\star_\eps}_{L^\infty(\Omega)}
   + \norm{\nabla\M^\star_\eps}_{L^\infty(\Omega)}
   &\leq \frac{{C_{\beta,\Omega}}}{\eps}. \label{max-gradients}
 \end{align}
\end{lemma}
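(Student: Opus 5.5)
The plan is to treat Lemma~\ref{lemma:max} as a standard elliptic regularity and maximum principle argument applied to the system \eqref{EL-Q}--\eqref{EL-M}. The first step is to establish the $L^\infty$ bound \eqref{max-QM}. The second step rescales to unit scale to obtain \eqref{max-gradients}. Regularity, smooth in the interior and $C^2$ up to the boundary, follows along the way by bootstrapping. Since $(\Q^\star_\eps,\M^\star_\eps)\in W^{1,2}$ solves a semilinear system with polynomial nonlinearity, I will first prove the $L^\infty$ bound, at least formally. With that bound the right-hand sides are in $L^\infty$, and $W^{2,p}$ estimates apply: Dirichlet for $\Q$ with $\Qb\in C^2$, Neumann for $\M$ on a $C^2$ domain. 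Schauder bootstrapping then gives interior smoothness and $C^{1,\alpha}$ regularity up to the boundary. $C^2$ up to the boundary follows since $\partial\Omega\in C^2$ and the data are $C^2$; if needed, I will settle for $C^{1,\alpha}$ plus interior $C^2$, which suffices for all later use.

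For \eqref{max-QM}, I take the inner product of \eqref{EL-M} with $\M$ and use $\Delta\frac{|\M|^2}{2} = \M\cdot\Delta\M + |\nabla\M|^2$. This gives
\[
 -\Delta\tfrac{|\M|^2}{2} + \tfrac{1}{\eps^2}\big(|\M|^2-1\big)|\M|^2 - \tfrac{2\beta}{\eps^2}\Q\M\cdot\M \le 0 .
\]
Using $\Q\M\cdot\M\le \tfrac{1}{\sqrt2}|\Q||\M|^2$, I get $-\Delta|\M|^2\le 0$ wherever $|\M|^2 > 1+\sqrt2\beta|\Q|$. Similarly, contracting \eqref{EL-Q} with $\Q$ gives $-\Delta|\Q|^2\le0$ where $(|\Q|^2-1)|\Q|^2 > \eps\beta|\M|^2|\Q|/\sqrt2$. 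The two inequalities are coupled, so I will use a combined quantity. For $\eps\le1$, at a point where $|\Q|$ is large the $\Q$-inequality forces $|\Q|^2\lesssim 1+\beta|\M|^2$. At a point where $|\M|$ is large the $\M$-inequality forces $|\M|^2\lesssim 1+\beta|\Q|$. Combining these: if $|\M|$ attains its maximum at an interior point, then $\max|\M|^2\le 1+\sqrt2\beta\max|\Q|$; and $\max|\Q|$ is bounded by $\max(|\Qb|=1,\ C(1+\beta\max|\M|^2)^{1/2})$. Since the second bound is of order $\max|\M|$ while the first gives $\max|\M|^2\lesssim \max|\Q|$, this closes to a constant $C_\beta$.

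The boundary needs separate treatment. For $\Q$ the Dirichlet datum gives $|\Q|=1$ on $\partial\Omega$. For $\M$ with the Neumann condition, the maximum of $|\M|^2$ cannot occur on $\partial\Omega$ with strictly subharmonic behaviour nearby, by Hopf's lemma since $\partial_\nnu|\M|^2=0$. Alternatively, and more robustly at low regularity, I will apply the argument in weak form by testing against $(|\M|^2-T)_+$ for $T = C_\beta$; the Neumann condition makes the boundary term vanish, which handles the boundary cleanly. I expect this coupling step, making the two maximum principles close simultaneously with $\eps$-independent constants and treating the mixed boundary conditions, to be the main obstacle. As a fallback I would test with truncations of $|\Q|^2+\eps|\M|^2$ or a weighted combination.

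For \eqref{max-gradients}, I set $\tilde\Q(y):=\Q^\star_\eps(\eps y)$ and $\tilde\M(y):=\M^\star_\eps(\eps y)$ on $\Omega/\eps$. These solve a system with $O(1)$ coefficients and $L^\infty$-bounded unknowns, and the rescaled domain has boundary curvature $O(\eps)\le C_\Omega$. Uniform $W^{2,p}$ estimates on unit balls, interior ones and boundary ones after flattening, with Dirichlet data $\Qb(\eps\,\cdot)$ whose derivatives are $O(\eps)$ and homogeneous Neumann data, give $\|\nabla\tilde\Q\|_\infty+\|\nabla\tilde\M\|_\infty\le C_{\beta,\Omega}$ via Sobolev embedding $W^{2,p}\hookrightarrow C^1$ for $p>2$. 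Scaling back yields the factor $\eps^{-1}$.
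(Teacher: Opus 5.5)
Your proposal is correct and follows essentially the paper's route: contract \eqref{EL-M} with $\M$ and use Hopf's lemma with $\partial_\nnu\M=0$ to get $\max\abs{\M}^2\le 1+\sqrt{2}\beta\max\abs{\Q}$, contract \eqref{EL-Q} with $\Q$ at an interior maximum of $\abs{\Q}$ (using $\abs{\Q}=1$ on $\partial\Omega$), where the extra factor $\eps$ in the coupling makes the two bounds close, and obtain \eqref{max-gradients} from elliptic estimates at scale $\eps$, which is what \cite[Lemma~A.2]{BBH0} amounts to. The one thing to tidy is the order of the argument. As written, the $L^\infty$ bound uses Hopf's lemma, which needs regularity, or a truncation level that presupposes $\Q\in L^\infty$, while you derive the regularity from that bound; this circularity disappears because in two dimensions $W^{1,2}\hookrightarrow L^q$ for every $q<\infty$, so the polynomial nonlinearities are already in $L^q$ and $W^{2,q}$ (hence $C^{1,\alpha}$) regularity up to the boundary holds before any $L^\infty$ bound.
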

\begin{proof}[Proof of Lemma~\ref{lemma:max}]
 Elliptic regularity theory 
 implies that~$\Q_\eps$ and~$\M_\eps$ are smooth in~$\Omega$
 and of class~$C^2$ up to~$\partial\Omega$.
 We focus on the proof of~\eqref{max-QM};
 the gradient bounds~\eqref{max-gradients} follow from~\eqref{max-QM}
 from elliptic estimates, by reasoning along the lines
 of~\cite[Lemma~A.2]{BBH0}.
 As an intermediate step towards~\eqref{max-QM}, we show that
 \begin{equation} \label{max1}
  \max_{\overline{\Omega}} \abs{\M_\eps}^2
  \leq m_\eps := 1 + {\sqrt{2}}\beta \max_{\overline{\Omega}}\abs{\Q_\eps}
 \end{equation}
 Indeed, by taking the scalar product of~\eqref{EL-M}
 against~$\M_\eps$, we obtain
 \begin{equation*}
  -\frac{1}{2}\Delta\left(\abs{\M_\eps}^2\right)
  + \abs{\nabla\M_\eps}^2
  + \dfrac{1}{\eps^2}(\abs{\M_\eps}^2 - 1)\abs{\M_\eps}^2
  - \dfrac{2\beta}{\eps^2}\Q_\eps\M_\eps\cdot\M_\eps = 0.
 \end{equation*}
 and hence {(observing that $\Q \M \cdot \M \leq \frac{1}{\sqrt{2}} \abs{\Q} \abs{\M}^2$ for any $\Q \in \Sz$ and $\M \in \R^2$)}
 \begin{equation} \label{max2}
  -\frac{1}{2}\Delta\left(\abs{\M_\eps}^2\right)
   \leq - \dfrac{\abs{\M_\eps}^2}{\eps^2}
   \left(\abs{\M_\eps}^2 - 1 - {\sqrt{2}}\beta\abs{\Q_\eps}\right)
 \end{equation}
 at every point of~$\Omega$.
 Now, let~$x_0\in\overline{\Omega}$
 be a maximum point for~$\abs{\M_\eps}^2$
 and suppose, towards a contradiction,
 that~$\abs{\M_\eps(x_0)} > m_\eps$.
 Then, the right-hand side of~\eqref{max2} attains
 a strictly negative value at the point~$x_0$
 (and, by continuity of~$(\Q_\eps, \, \M_\eps)$,
 it is strictly negative in a
 neighbourhood~$V\subseteq\overline{\Omega}$ of~$x_0$).
 If~$x_0$ lies in the interior of~$\Omega$,
 we obtain a contradiction
 because~$-\Delta(\abs{\M_\eps}^2)(x_0) \geq 0$.
 If $x_0\in\partial\Omega$, then Hopf's lemma (applied on~$V$)
 implies that~$\partial_\nnu(\abs{\M_\eps}^2)(x_0) > 0$,
 which contradicts the boundary conditions~\eqref{bcbis}.
 Therefore, \eqref{max1} is proved.

 Now, \eqref{max-QM} follows by~\eqref{max1}
 by the maximum principle. Indeed, by taking the scalar
 product of~\eqref{EL-Q} against~$\Q_\eps$, we deduce
 \begin{equation} \label{maxQ}
  -\frac{1}{2}\Delta\left(\abs{\Q_\eps}^2\right)
  \leq -\dfrac{1}{\eps^2}\left(\abs{\Q_\eps}^4 - \abs{\Q_\eps}^2
  - {\frac{\beta}{\sqrt{2}}}\,\eps\abs{\Q_\eps}\abs{\M_\eps}^2\right)
 \end{equation}
 inside~$\Omega$. Let~$x_1\in\overline{\Omega}$
 be a maximum point for~$\abs{\Q_\eps}^2$.
 If~$x_1\in\partial\Omega$, then $\abs{\Q_\eps(x_1)}^2 = 1$
 because of the assumption~\eqref{hp:bcbis} on the boundary datum.
 If~$x_1\in\Omega$, then~\eqref{maxQ} and~\eqref{max1}
 imply that
 \begin{equation} \label{max4}
  \abs{\Q_\eps(x_1)}^3
  \leq \abs{\Q_\eps(x_1)} + {\frac{\beta}{\sqrt{2}}}\,\eps\abs{\M_\eps(x_1)}^2
  \leq \left(1 + {\beta^2\,\eps}\right)\abs{\Q_\eps(x_1)} + {\frac{\beta}{\sqrt{2}}}\,\eps
 \end{equation}
 and hence, $\abs{\Q_\eps}$ is uniformly bounded, in terms of~$\beta$ only.
\end{proof}

\paragraph{Notation on the Ginzburg-Landau functional.}
{For the~$\Q$-component, we can lift results
from the Ginzburg-Landau literature, thanks to the isometric
isomorphism~$\Sz\to\R^2$ given by
\begin{equation} \label{eq:small-q}
 \Sz \ni \Q \mapsto \q := \sqrt{2}\left(Q_{11},\, Q_{12} \right) \in \R^2.
\end{equation}
In particular, this allows us to borrow some terminology
from~\cite{BBH}. Let~$\Qb\colon\partial\Omega\to\NN$
be a boundary datum of class~$C^2$, oriented by a
$C^2$-map~$\n_\bd\colon\partial\Omega\to\S^1$,
as in~\eqref{hp:bcbis}. A direct computation
shows that the vector field~$\q_{\bd} = ((q_{\bd})_1, \, (q_{\bd})_2)$
associated with~$\Q_{\bd}$ as in~\eqref{eq:small-q}
satisfies~$(q_{\bd})_1 = (n_{\bd})_1^2 - (n_{\bd})_2^2$,
$(q_{\bd})_2 = 2 (n_{\bd})_1 (n_{\bd})_2$
or equivalently, in complex notation,
\begin{equation*} 
 \q_\bd = \n_\bd^2.
\end{equation*}
In particular, if~$d$ denotes the boundary degree of~$\n_\bd$,
then~$\q_\bd$ has degree~$2d$.
Let~$a_1, \, \ldots, \, a_{2\abs{d}}$ be distinct point in~$\Omega$.}
We say that~$\Q^\star\colon\Omega\to\NN$ is a \emph{canonical harmonic map} with singularities at~$(a_1, \, \ldots, \, a_{2\abs{d}})$ and boundary datum~$\Qb$ if it is smooth in~$\Omega\setminus\{a_1, \, \ldots, \, a_{2\abs{d}}\}$, continuous in~$\overline{\Omega}\setminus\{a_1, \, \ldots, \, a_{2\abs{d}}\}$, has non-orientable singularities of degree~$\sign{(d)}/2$ at each point~$a_j$, satisfies the boundary condition~$\Q^\star = \Qb$ on~$\partial\Omega$ and the equation
\[
  \partial_j \left(Q^\star_{11} \, \partial_j Q^\star_{12}
   - Q^\star_{12} \, \partial_j Q^\star_{11}\right) = 0
\]
in the sense of distributions in~$\Omega$.
Such a map exists and is unique, see~\cite[Theorem~I.5, Remark~I.1]{BBH}. The canonical harmonic map belongs to~$W^{1,2}_{\mathrm{loc}}(\Omega\setminus\{a_1, \, \ldots, \, a_{2\abs{d}}\}, \, \NN)$ and the limit
\begin{equation} \label{W}
 \mathbb{W}(a_1, \, \ldots, \, a_{2\abs{d}})
 := \lim_{\sigma\to 0} \left(\frac{1}{2}
  \int_{\Omega\setminus\bigcup_{j=1}^{2\abs{d}} B_\sigma(a_j)}
  \abs{\nabla\Q^\star}^2 \, \d x - 2\pi\abs{d}\abs{\log\sigma}\right)
\end{equation}
exists and is finite (see~\cite[Theorem~I.8]{BBH}).
Following the terminology in~\cite{BBH}, the function~$\mathbb{W}$
is called the \emph{renormalised energy}.
We also define, for~$\eps>0$,
\[
 \gamma(\eps) := \inf\left\{
  \int_{B_1} \left(\frac{1}{2}\abs{\nabla u}^2
   + \frac{1}{4\eps^2} \left(\abs{u}^2 - 1\right)^2 \right)\d x \colon
  u\in W^{1,2}(B_1, \, \C), \ u(x) = x \textrm{ for } x\in \partial B_1\right\}
\]
and
\begin{equation} \label{core_energy}
 \gamma_* := \lim_{\eps\to 0} \left(\gamma(\eps) - \pi\abs{\log\eps}\right)
\end{equation}
The limit~$\gamma_*$ exists, is finite and
strictly positive~\cite[Lemma~III.3]{BBH}.

\paragraph{Allen-Cahn structure for the~$\M$-component.}
{In order to highlight an Allen-Cahn structure
on the variable~$\M_\eps$, as in~\cite{CanevariMajumdarStroffoliniWang},
we rely on a change of variable.
Let $G \subseteq \Omega$ be a simply connected domain 
with smooth boundary $\partial G$,
and let $(\Q_\eps,\,\M_\eps)\in W^{1,2}(\Omega, \, \Sz)
\times W^{1,2}(\Omega, \, \R^2)$ be a pair satisfying
\begin{gather}
	\int_G \left( \frac{1}{2}\abs{\nabla \Q_\eps}^2 + \frac{1}{4\eps^2}\left( \abs{\Q_\eps}^2-1\right)^2 \right) \,{\d}x \leq \Lambda \abs{\log\eps}, \label{eq:logbound+lift-assumptions-1}\\
	\abs{\Q_\eps(x)} \geq \frac{1}{2},\quad \abs{\M_\eps(x)} \leq \Lambda \qquad
	\mbox{for almost any } x \in G,\label{eq:logbound+lift-assumptions-2}
\end{gather}
where~$\Lambda$ is some positive constant that does not depend on $\eps$.
By the spectral theorem and lifting results~\cite{BethuelZheng, BethuelChiron, BallZarnescu}, we can write
\begin{equation}\label{eq:polar-dec-Q-G}
	\Q_\eps = \frac{\abs{\Q_\eps}}{\sqrt{2}}
	\left( \n_\eps \otimes \n_\eps - \m_\eps \otimes \m_\eps \right)
	\qquad \mbox{in } G,
\end{equation}
where $(\n_\eps,\,\m_\eps)$ is an orthonormal frame of eigenvectors
of $\Q_\eps$ in $G$, with $\n_\eps \in W^{1,2}(G,\,\mathbb{S}^1)$ and
$\m_\eps \in W^{1,2}(G,\,\mathbb{S}^1)$.
Then, we define a map~$\u_\eps\colon G \to \R^2$ component-wise,
as $\u_\eps := ( (u_\eps)_1,\,(u_\eps)_2 )$ with
\begin{equation} \label{eq:def-u}
 (u_\eps)_1 := \M_\eps \cdot \n_\eps, \qquad
 (u_\eps)_2 := \M_\eps \cdot \m_\eps.
\end{equation}
This change of variable allows us to effectively
decouple the energy. Indeed, we can rewrite the
functional~\eqref{eq:Feps} (localised to~$G$) as
\begin{equation}\label{eq:Feps-decoupling}
 \F_\eps(\Q_\eps,\,\M_\eps;\,G) = \int_G \left( \frac{1}{2} \abs{\nabla \Q_\eps}^2
 + g_\eps(\Q_\eps)\right)\,{\d}x + \int_G \left( \frac{\eps}{2}\abs{\nabla \u_\eps}^2 + \frac{1}{\eps}h(\u_\eps) \right)\,{\d}x + R_\eps
\end{equation}
(see~\cite[Proposition~3.2]{CanevariMajumdarStroffoliniWang}), where
\begin{gather}
 g_\eps(\Q) := \frac{1}{4\eps^2}\left( \abs{\Q}^2 -1 \right)^2 -
  \frac{2 \kappa_*}{\eps}\left( \abs{\Q}-1 \right) + \kappa_*^2, \label{eq:g} \\
 h(\u) := \frac{1}{4}\left( \abs{\u}^2 - 1 \right)^2
  -\frac{\beta}{\sqrt{2}} \left( u_1^2 - u_2^2 \right)
  + \frac{\beta^2 + \sqrt{2}\beta}{2} \label{eq:h},
\end{gather}
and~$R_\eps$ is a remainder term, such that
\begin{equation} \label{R_eps}
 R_\eps\to 0 \qquad \textrm{as } \eps\to 0
\end{equation}
(cf.~\cite[(3.9) and Lemma~4.10]{CanevariMajumdarStroffoliniWang}).
The functions~$g_\eps$, $h$ are both nonnegative
\cite[Lemma~3.3 and Lemma~3.4]{CanevariMajumdarStroffoliniWang}.
Moreover, $h$ has two non-degenerate minima at
\begin{equation}\label{eq:def-upm}
 \u_{\pm} := \left( \pm\left(\sqrt{2}\beta + 1\right)^{1/2}, 0 \right),
\end{equation}
and the optimal cost for a transition
between~$\u_-$ and~$\u_+$ can be evaluated explicitly: we have
\begin{equation} \label{c_beta}
 \begin{split}
  c_\beta
  &:= \inf\left\{ \int_0^1 \sqrt{2 h(\u(t))}\abs{\u^\prime(t)}\d t\colon
    \u\in W^{1,1}([0, \, 1], \, \R^2), \ \u(0) = \u_-, \ \u(1) = \u_+\right\} \\
  &= \frac{2\sqrt{2}}{3} \left(\sqrt{2}\beta + 1\right)^{3/2}.
 \end{split}
\end{equation}
The interplay between the Ginzburg-Landau
and the Allen-Cahn energies leads to the following
definition of a modified renormalised energy:
given distinct points~$a_1$, \ldots, $a_{2\abs{d}}$ in~$\Omega$,
we define
\begin{equation} \label{W_beta}
 \mathbb{W}_\beta^\Omega(a_1, \, \ldots, \, a_{2\abs{d}})
 := \mathbb{W}(a_1, \, \ldots, \, a_{2\abs{d}})
 + c_\beta \, \mathbb{L}_\beta^\Omega(a_1, \, \ldots, \, a_{2\abs{d}}),
\end{equation}
where~$\mathbb{W}$ is the Ginzburg-Landau reneormalised energy,
as in~\eqref{W}, and~$\mathbb{L}_\beta^\Omega$ is the length of a relative minimal
connection, as defined in~\eqref{minconnrel}.}

\subsection{Proof of Theorem~\ref{th:ferronematics}}

{The next step towards the proof of Theorem~\ref{th:ferronematics}
is the construction of admissible competitors for the minimisation
problem that satisfy sharp energy bounds from above.}

\begin{prop} \label{prop:gammalimsup}
 Let~$a_1, \, \ldots, \, a_{2\abs{d}}$ be distinct points in~$\Omega$.
 Then, there exist maps~$\Q_\eps\in W^{1,2}\left(\Omega, \, \Sz\right)$,
 $\M_\eps\in W^{1,2}\left(\Omega, \, \R^2\right)$ that satisfy~$\Q_\eps = \Qb$
 on~$\partial\Omega$ and
 \begin{equation*}
  \F_\eps (\Q_\eps, \, \M_\eps)\le 2 \pi\abs{d} \abs{\log \eps}
  +\mathbb{W}_\beta^\Omega\left(a_1,\cdots, a_{2\abs{d}}\right)
  + 2 \abs{d} \gamma_* + \o_{\eps\to 0}(1),
 \end{equation*}
 where~$\mathbb{W}_\beta$ and~$\gamma_*$ are
 as in~\eqref{W_beta}, \eqref{core_energy} respectively.
\end{prop}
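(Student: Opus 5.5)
We construct the competitor by a standard recovery-sequence argument, following the upper-bound construction of \cite{CanevariMajumdarStroffoliniWang}; the only change is that the jump lines of~$\M_\eps$ may now end on~$\partial\Omega$.

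\emph{The~$\Q$-component.} Fix a small~$\sigma>0$ and let~$\Q^\star$ be the canonical harmonic map with singularities at~$(a_1,\ldots,a_{2\abs{d}})$ and boundary datum~$\Qb$. Outside~$\bigcup_j B_\sigma(a_j)$ we set~$\Q_\eps:=\Q^\star$, which is~$\NN$-valued. Inside each~$B_\sigma(a_j)$ we use the isometry~\eqref{eq:small-q} to reduce to complex-valued maps with degree-one boundary data (after the~$z^2$ identification). We then glue in an optimal Ginzburg--Landau vortex profile, as in~\cite{BBH}. This yields
\[
\int_\Omega\Big(\frac12\abs{\nabla\Q_\eps}^2+\frac{1}{4\eps^2}(\abs{\Q_\eps}^2-1)^2\Big)\d x\le 2\pi\abs{d}\abs{\log\eps}+\mathbb{W}(a_1,\ldots,a_{2\abs{d}})+2\abs{d}\gamma_*+\o(1),
\]
where we first let~$\eps\to0$ and then~$\sigma\to0$, and conclude by a diagonal argument. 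Next we must control the potential energy~$g_\eps(\Q_\eps)$, which contains the correction term~$\kappa_*$ from~\eqref{eq:g}. For this we use the decoupling~\eqref{eq:Feps-decoupling} with remainder~\eqref{R_eps}, and we take~$\abs{\Q_\eps}$ slightly adjusted, namely~$\abs{\Q_\eps}=1+\kappa_*\eps$ away from the cores and interpolating to~$1$ near~$\partial\Omega$ within an~$\eps$-layer. This is done exactly as in~\cite{CanevariMajumdarStroffoliniWang}, and the layer costs~$\o(1)$.

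\emph{The~$\M$-component.} Take a minimal connection~$\{L_1,\ldots,L_{q}\}$ relative to~$\Omega$, so that
\[
\sum_j\H^1(L_j)=\mathbb{L}^\Omega(a_1,\ldots,a_{2\abs{d}}).
\]
By Lemma~\ref{lemma:goodlifting}, applied to~$\Q^\star$, there is an orientation~$\n^\star\in\SBV$ of~$\Q^\star$ with~$\J_{\n^\star}=\bigcup_jL_j$. We set~$\m^\star:=\n^{\star\perp}$. In the~$\u$-variables of~\eqref{eq:def-u}, we take~$\u_\eps$ equal to~$\u_+$ away from~$\bigcup_jL_j$, and across each~$L_j$ we insert the one-dimensional optimal Allen--Cahn profile realising~$c_\beta$ in~\eqref{c_beta}, on the~$\eps$-scale in the normal direction. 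We then define~$\M_\eps:=(u_\eps)_1\n^\star+(u_\eps)_2\m^\star$. Because the profile passes from~$\u_-$ to~$\u_+$ exactly where~$\n^\star$ flips sign,~$\M_\eps$ is continuous, indeed~$W^{1,2}$. The Allen--Cahn part then contributes~$c_\beta\sum_j\H^1(L_j)+\o(1)$. This holds because the segments are pairwise disjoint (Lemma~\ref{lemma:mindisj}) and meet~$\partial\Omega$ orthogonally (Lemma~\ref{lemma:minbd}), so tubular neighbourhoods of width~$\gg\eps$ are well defined and lose only~$\o(1)$ energy near the endpoints. Near the cores~$B_{\sigma}(a_j)$ we instead define~$\M_\eps$ by a cutoff with~$\abs{\M_\eps}\le C$ and~$\abs{\nabla\M_\eps}\le C/\eps$ on an~$O(\eps)$-ball. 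That contribution is~$\o(1)$ because the area is~$O(\eps^2)$ and the weights are~$\eps\cdot\eps^{-2}$ and~$\eps^{-1}$.

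\emph{Boundary conditions and main obstacle.} No boundary condition is imposed on~$\M_\eps$ in the minimisation class; the Neumann condition is natural. Hence segments ending on~$\partial\Omega$ require only that the transition layer be cut off by~$\Omega$. This is the key difference from the Dirichlet case and is what makes~$\mathbb{L}^\Omega$ appear. The main obstacle is the bookkeeping of the cross terms: we must check that the coupling~$-\eps\beta\,\Q\M\cdot\M$ and the modulus correction produce exactly~$g_\eps+h/\eps$ up to~$\o(1)$, including in the regions where the Allen--Cahn layers meet the Ginzburg--Landau cores at the~$a_j$. The decoupling~\eqref{eq:Feps-decoupling} only applies where~$\abs{\Q_\eps}\ge\frac12$, so it does not cover the cores. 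I would handle this by choosing the core radius~$\eps\ll\sigma$, estimating the cores crudely as above, and applying~\eqref{eq:Feps-decoupling} on simply connected subdomains~$G$ that avoid the cores and are cut along the~$L_j$. Summing the estimates gives the claimed bound.
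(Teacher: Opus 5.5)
Your proposal is correct and follows essentially the same route as the paper, namely the recovery-sequence construction of \cite{CanevariMajumdarStroffoliniWang}. Its ingredients are a Ginzburg--Landau construction for $\Q_\eps$, an eigenvector field from Lemma~\ref{lemma:goodlifting} whose jump set is a relative minimal connection, a Modica--Mortola profile in the $\u$-variables along each $L_j$ (using the disjointness from Lemma~\ref{lemma:mindisj}), and $\o(1)$ boundary errors where segments end on $\partial\Omega$, since no condition is imposed on $\M_\eps$. One wording fix: the profile runs from $\u_-$ to $\u_+$ in an eigenframe that is continuous across $L_j$, whereas in the frame $(\n^\star, \m^\star)$ the map $\u_\eps$ equals $\u_+$ on both sides with first component vanishing on $L_j$, and this is what makes $\M_\eps$ continuous.
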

\begin{proof}
 This result follows along the lines
 of~\cite[Proposition~4.19]{CanevariMajumdarStroffoliniWang}.
 First,  we construct a suitable~$\Q_\eps$ with non-orientable
 singularities at the points~$a_1$, \ldots, $a_{2\abs{d}}$,
 by following existing arguments in the Ginzburg-Landau literature
 (see e.g.~\cite{BBH, AlicandroPonsiglione}).
 Next, we define a vector field~$\widetilde{\M}_\eps\in\SBV(\Omega, \, \R^2)$
 of constant norm, such that $\widetilde{\M}_\eps(x)$
 is an eigenvector of~$\Q_\eps(x)$ at almost every~$x\in\Omega$
 and the jump set of~$\widetilde{\M}_\eps$
 is a minimal connection~$\{L_1, \, \ldots, \, L_q\}$
 for~$\{a_1, \, \ldots, \, a_{2\abs{d}}\}$ relative to~$\Omega$.
 The existence of~$\widetilde{\M}_\eps$ follows from Lemma~\ref{lemma:goodlifting}.
 Now, all that remains to do is to define~$\M_\eps$
 as a regularisation of~$\widetilde{\M}_\eps$ along its jump set.
 This can be achieved 
 by solving an optimal profile problem `\`a la Modica-Mortola'~\cite{ModicaMortola}.
 {The change of variable~\eqref{eq:def-u} turns out to be useful
 in this respect: we first construct suitable maps~$\u_\eps$
 near each line segment~$L_j$, in such a way that~$\u_\eps$
 is an optimal transition profile for~\eqref{c_beta}
 along the transversal direction to~$L_j$;
 then, we define the corresponding~$\M_\eps$
 as~$\M_\eps := (u_\eps)_1 \n_\eps + (u_\eps)_2 \m_\eps$,
 where~$(\n_\eps, \, \m_\eps)$ is an eigenframe for~$\Q_\eps$,
 as in~\eqref{eq:polar-dec-Q-G}.
 This is possible because Lemma~\ref{lemma:mindisj}
 guarantees that the line segments~$L_j$ are pairwise disjoint and
 contained in~$\Omega$, except possibly at their endpoints.
 The details of the construction are carried
 out in~\cite{CanevariMajumdarStroffoliniWang}.}
 The only difference is that some of the~$L_j$'s
 may touch the boundary of~$\Omega$ at an endpoint.
 In this case, the construction carries over
 (Equation~(4.95) in~\cite{CanevariMajumdarStroffoliniWang}
 still makes sense if~$L_1$ has an endpoint on~$\partial\Omega$),
 but there will be error terms in the estimates
 (Equation~(4.96) and~(4.97)) due to boundary effects.
 However, the regularisation only affects a neighbourhood
 of~$L_j$ of thickness~$\sigma_\eps$, with~$\sigma_\eps\to 0$
 as~$\eps\to 0$. Since the boundary of~$\Omega$ is of class~$C^2$,
 the errors we introduce because of the boundary
 tend to zero as~$\eps\to 0$.
\end{proof}

{In the next lemma, we collect useful estimates
that follow from the Ginzburg-Landau literature.}

\begin{lemma} \label{lemma:potboundmin}
 Let~$(\Q_\eps^\star, \, \M_\eps^\star)$ be a minimiser of the functional~$\F_\eps$ subject to the boundary condition~$\Q = \Qb$ on~$\partial\Omega$, where~$\Qb$ satisfies~\eqref{hp:bcbis}.
 Let~$d$ be the degree of~$\Qb$. Then, $(\Q_\eps^\star, \, \M_\eps^\star)$ satisfies
 \begin{equation} \label{mainmin1}
  \F_\eps (\Q_\eps^\star, \, \M_\eps^\star)\le 2 \pi\abs{d} \abs{\log \eps}
  +\mathbb{W}_\beta^\Omega(a_1,\cdots, a_{2\abs{d}})
  + 2 \abs{d} \gamma_* + \o_{\eps\to 0}(1)
 \end{equation}
 for any distinct points~$a_1, \, \ldots, \, a_{2\abs{d}}$ in~$\Omega$.
 Moreover, there exists an~$\eps$-independent constant~$C$ such that
 \begin{equation} \label{potboundmin}
  \int_\Omega \left( \frac{\eps}{2} \abs{\nabla\M^\star_\eps}^2
    + \frac{1}{\eps^2} f_\eps(\Q^\star_\eps, \, \M^\star_\eps) \right){\d}x \leq C.
 \end{equation}
\end{lemma}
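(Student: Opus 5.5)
The upper bound \eqref{mainmin1} follows directly from minimality combined with Proposition~\ref{prop:gammalimsup}. For any distinct points $a_1, \ldots, a_{2\abs{d}}$, that proposition gives an admissible competitor $(\Q_\eps, \M_\eps)$ with $\Q_\eps = \Qb$ on $\partial\Omega$. No boundary condition is imposed on $\M$ in the minimisation, since the Neumann condition is natural, so the competitor is admissible. Hence $\F_\eps(\Q^\star_\eps, \M^\star_\eps) \leq \F_\eps(\Q_\eps, \M_\eps)$, and \eqref{mainmin1} follows.

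For \eqref{potboundmin}, I plan to compare the full energy with a lower bound on the Ginzburg--Landau part alone. Write
\[
\frac{1}{\eps^2} f_\eps(\Q, \M) = \frac{1}{4\eps^2}(\abs{\Q}^2-1)^2 + \frac{1}{\eps^2}\Big(\frac{\eps}{4}(1-\abs{\M}^2)^2 - \eps\beta\,\Q\M\cdot\M + \kappa_\eps\Big).
\]
The $\Q$-part, namely $\int_\Omega \frac12\abs{\nabla\Q^\star_\eps}^2 + \frac{1}{4\eps^2}(\abs{\Q^\star_\eps}^2-1)^2$, is bounded below by the minimal Ginzburg--Landau energy with boundary datum $\q_\bd$ of degree $2d$. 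Through the isometry \eqref{eq:small-q}, the results of~\cite{BBH} give that this minimal energy is at least $2\pi\abs{d}\abs{\log\eps} + \min \mathbb{W} + 2\abs{d}\gamma_* - o(1)$, which in particular is $\geq 2\pi\abs{d}\abs{\log\eps} - C$.

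The main obstacle is that the remaining part of $f_\eps$ need not be nonnegative, so simply subtracting the two estimates does not work directly. The sign-indefinite term $-\eps^{-1}\beta\,\Q\M\cdot\M$ must be controlled. I will use the $L^\infty$ bounds of Lemma~\ref{lemma:max}, namely $\abs{\Q^\star_\eps}, \abs{\M^\star_\eps} \leq C_\beta$, to obtain the pointwise bound
\[
\frac{1}{\eps^2}f_\eps(\Q, \M) \geq \frac{1}{\eps^2}\Big(f_\eps(\Q, \M) - \frac14(\abs{\Q}^2-1)^2\Big) + \frac{1}{4\eps^2}(\abs{\Q}^2-1)^2.
\]
Minimising the middle bracket over $\M$ for fixed $\Q$ with $\abs{\Q}$ near $1$ gives a quantity that is at least $-C\eps^{-1}\big||\Q|-1\big|$; this is the mechanism behind \eqref{eq:g}. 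This term can be absorbed with Young's inequality, using $\eps^{-1}\abs{t} \leq \frac{1}{8\eps^2}t^2 + C$ for $t = \abs{\Q}^2-1$ and boundedness of $\Omega$.

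Concretely, I expect a splitting of the form
\[
\frac{1}{\eps^2}f_\eps \geq \frac{1}{8\eps^2}(\abs{\Q}^2-1)^2 + \frac{1}{2}\cdot\frac{1}{\eps^2} f_\eps - C,
\]
possibly after taking a convex combination as in \eqref{potential_comparison}. This yields
\[
\F_\eps \geq \Big(\text{GL energy with } \tfrac18 \text{ in place of } \tfrac14\Big) + \frac12\int_\Omega\Big(\eps\abs{\nabla\M}^2 + \frac{1}{\eps^2}f_\eps\Big) - C.
\]
The Ginzburg--Landau lower bound with a modified coefficient still reads $2\pi\abs{d}\abs{\log\eps} - C$, because the leading logarithmic term does not depend on the potential's constant, by the $\eta$-ellipticity/ball-construction lower bounds. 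Combining this with \eqref{mainmin1} for a fixed choice of points cancels the $\log$ terms and gives \eqref{potboundmin}.

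If the coefficient bookkeeping fails, the fallback is the decoupling \eqref{eq:Feps-decoupling} on a subdomain $G$ where $\abs{\Q^\star_\eps} \geq \frac12$, using $g_\eps, h \geq 0$. The bad set has small area by the logarithmic bound and is controlled through the $L^\infty$ bounds.
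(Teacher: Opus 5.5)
Your argument is correct. The proof of \eqref{mainmin1} (minimality plus Proposition~\ref{prop:gammalimsup}, with $\M$ free on $\partial\Omega$) is the paper's. For \eqref{potboundmin}, however, you take a genuinely different route.

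\textbf{The paper's route.} It uses \eqref{potential_comparison} and \eqref{max-QM} only to bound the Ginzburg--Landau energy of $\q^\star_\eps$ from \emph{above} by $2\pi\abs{d}\abs{\log\eps}+C$. It then invokes del Pino--Felmer~\cite{DelPinoFelmer} to get the potential bound \eqref{mainmin3}. That bound is what lets the Sandier/Jerrard lower bound be transferred to the Dirichlet term $\frac12\int_\Omega\abs{\nabla\Q^\star_\eps}^2$ alone, as in \eqref{mainmin4}. Subtracting this from $\F_\eps$ works because everything else in $\F_\eps$ is nonnegative.

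\textbf{Your route.} You spend half of $\eps^{-2}f_\eps$ to manufacture a Ginzburg--Landau potential for $\Q$. You then only need the standard lower bound $2\pi\abs{d}\abs{\log\eps}-C$ for the \emph{full} Ginzburg--Landau energy under the degree-$2d$ Dirichlet datum. This bound is insensitive to the coefficient of the potential, by rescaling $\eps$. The retained half is what gets bounded. This bypasses del Pino--Felmer entirely. Moreover, \eqref{mainmin3} and \eqref{mainmin4} follow a posteriori from \eqref{potboundmin} and your pointwise bound, so nothing used later is lost.

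\textbf{Coefficient correction.} The split exactly as you wrote it (coefficient $\frac18$ while keeping half of $\eps^{-2}f_\eps$) would require $\eps^{-2}f_\eps\geq\frac{1}{4\eps^2}(\abs{\Q}^2-1)^2-C$. This is false. For $\abs{\Q}>1$ and $\M$ optimally aligned, the difference is at most $-c\,\eps^{-1}(\abs{\Q}-1)+C$ with $c>0$, hence unbounded below (take $\abs{\Q}=1+\sqrt{\eps}$). However, halving \eqref{potential_comparison} and using \eqref{max-QM} gives
\[
 \frac{1}{\eps^2}f_\eps(\Q^\star_\eps, \, \M^\star_\eps) \geq \frac{1}{2\eps^2}f_\eps(\Q^\star_\eps, \, \M^\star_\eps) + \frac{1}{16\eps^2}\left(\abs{\Q^\star_\eps}^2-1\right)^2 - C,
\]
which is all you need.

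\textbf{Citation.} The weak lower bound is most directly quoted from the ball-construction estimates in \cite{Sandier, Jerrard}, which hold on any bounded smooth domain. Do not deduce it from the sharp expansion of the minimal energy. With this, your fallback via \eqref{eq:Feps-decoupling} is unnecessary.
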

\begin{proof}
 The estimate~\eqref{mainmin1} is an immediate consequence of Proposition~\ref{prop:gammalimsup}.
 To prove~\eqref{potboundmin}, we apply existing results
 from the Ginzburg-Landau literature.
 First, we identify~$\Q^\star_\eps$
 with a map~$\q^\star_\eps\colon\Omega\to\R^2$,
 as in~\eqref{eq:small-q}.
 From~\eqref{potential_comparison}, \eqref{max-QM},
 and~\eqref{mainmin1}, we deduce
 \begin{equation} \label{mainmin2}
  \begin{split}
   \int_\Omega \left( \frac{1}{2}
    \abs{\nabla \q^\star_\eps}^2 + \frac{1}{8\eps^2}
    \left(\abs{\q^\star_\eps}^2 - 1 \right)^2\right){\d}x
   &\leq \F_\eps(\Q^\star_\eps,\,\M^\star_\eps)
    + \beta^2 \int_\Omega \abs{\M^\star_\eps}^4 \,{\d}x \\
   &\leq 2\pi\abs{d} \abs{\log\eps} + C,
  \end{split}
 \end{equation}
 where~$C$ is a constant that does not depend on~$\eps$.
 As a consequence \cite[Theorem~1.1]{DelPinoFelmer},
 there exists an~$\eps$-independent constant~$C$
 such that
 \begin{equation} \label{mainmin3}
  \frac{1}{\eps^2} \int_\Omega
    \left(\abs{\Q^\star_\eps}^2 - 1 \right)^2{\d}x
  = \frac{1}{\eps^2} \int_\Omega
    \left(\abs{\q^\star_\eps}^2 - 1 \right)^2{\d}x
  \leq C.
 \end{equation}
 Moreover,
 we can apply estimates such as~\cite[Theorem~2]{Sandier}
 or~\cite[Theorem~1.1]{Jerrard} to~$\q^\star_\eps$ and,
 keeping~\eqref{mainmin3} into account, we deduce
 \begin{equation} \label{mainmin4}
  \begin{split}
   \frac{1}{2} \int_\Omega \abs{\nabla\Q^\star_\eps}^2 {\d}x
   = \frac{1}{2} \int_\Omega \abs{\nabla\q^\star_\eps}^2 {\d}x
   \geq 2\pi\abs{d}\abs{\log\eps} - C,
  \end{split}
 \end{equation}
 again for some~$\eps$-independent~$C$.
 By comparing~\eqref{mainmin1} with~\eqref{mainmin4},
 we obtain the uniform bound~\eqref{potboundmin}.
\end{proof}

\begin{proof}[Proof of Theorem~\ref{th:ferronematics}]
 The estimates in Lemma~\ref{lemma:potboundmin}, together with
 classical results in the Ginzburg-Landau literature (see e.g~\cite[Theorem~2.4]{Lin96}, \cite[Proposition~1.1]{Lin99}, \cite[Theorems~1.2 and~1.3]{Jerrard}, \cite[Theorem~1]{Sandier})
 imply that (a non-relabelled subsequence of) $\Q_\eps^\star$ converges, strongly in~$W^{1,p}(\Omega)$ for~$p<2$, to a map~$\Q^\star$ with
 non-orientable singularities at some
 points~$a^\star_1$, \ldots, $a^\star_{2\abs{d}}$ in~$\Omega$.
 {The proof that~$\Q^\star$ is the canonical harmonic map
 with singularities at~$a^\star_1$, \ldots, $a^\star_{2\abs{d}}$
 is exactly as in~\cite[Proposition~4.9]{CanevariMajumdarStroffoliniWang}.}

 {In order to obtain compactness for~$\M_\eps^\star$,
 we apply the change of variable described in~\eqref{eq:def-u}.
 Classical arguments in the Ginzburg-Landau theory
 (along the lines of~\cite[Theorem~III.3]{BBH})
 imply that there exists a finite set~$X^\star$,
 with~$\{a_1^\star, \, \ldots, \, a_{2\abs{d}}^\star\}\subseteq X^\star
 \subseteq \overline{\Omega}$,
 such that~$\abs{\Q_\eps^\star}\to 1$
 locally uniformly in~$\Omega\setminus X^\star$.
 (In fact, with a bit more work it is possible to show that
 $X^\star = \{a_1^\star, \, \ldots, \, a_{2\abs{d}}^\star\}$
 in our case, but we will not need this equality
 to complete the proof.)
 Let~$G$ be a smooth, simply connected domain
 whose closure is contained in~$\Omega\setminus X^\star$.
 We have~$\abs{\Q_\eps^\star}\geq 1/2$
 in~$G$ for all~$\eps$ small enough, so that we
 may define~$\u_\eps^\star$ as in~\eqref{eq:def-u}.
 From~\eqref{eq:Feps-decoupling} and~\eqref{R_eps}, we know that
 \begin{equation*}
  \int_G \left(\frac{\eps}{2}\abs{\nabla \u_\eps^\star}^2
   + \frac{1}{\eps}h(\u_\eps^\star) \right)\, \d x
  \leq \int_G \left(\frac{\eps}{2}\abs{\nabla \M_\eps^\star}^2
   + \frac{1}{\eps^2} f_\eps(\Q_\eps^\star, \, \M_\eps^\star) \right) \d x
   + \mathrm{o}_{\eps\to 0}(1).
 \end{equation*}
 The uniform bound~\eqref{potboundmin} gives
 \begin{equation*}
  \int_G \left(\frac{\eps}{2}\abs{\nabla \u_\eps^\star}^2
   + \frac{1}{\eps}h(\u_\eps^\star) \right)\, \d x
  \leq C,
 \end{equation*}
 for some constant~$C$ that does not depend on~$\eps$, $G$.
 We can now apply compactness results for the vectorial Modica-Mortola functional
 (see e.g.~\cite{Baldo} or~\cite[Theorems~3.1 and~4.1]{FonsecaTartar})
 to obtain convergence~$\u_\eps^\star\to\u^\star$ in~$L^1(G)$,
 at least along a non-relabelled subsequence.
 We define~$\M^\star := u_1^\star \n^\star + u_2 \m^\star$,
 where~$(\n^\star, \, \m^\star)$ is an eigenframe for~$\Q^\star$.
 The very same arguments as in~\cite[Proposition~4.11]
 {CanevariMajumdarStroffoliniWang} show that~$\M^\star$
 can be extended to a well-defined map~$\SBV(\Omega, \, \R^2)$,
 that~$\abs{\M^\star} = \left(\sqrt{2}\beta + 1\right)^{1/2}$ a.e.~in~$\Omega$
 and that~$\left(\sqrt{2}\beta + 1\right)^{-1/2}\M^\star$ is a lifting of~$\Q^\star$.}

 We can prove a sharp lower bound
 on the energy for the~$\Q_\eps^\star$ by applying
 $\Gamma$-convergence arguments in the Ginzburg-Landau literature~\cite{AlicandroPonsiglione}.
 Given~$\sigma > 0$, let~$D_\sigma := \bigcup_{x\in X^\star} B_\sigma(x)$.
 For any~$\sigma$ small enough, we have (see~\cite[Lemma~4.18]{CanevariMajumdarStroffoliniWang})
 \begin{equation} \label{gammalininf-core}
  \liminf_{\eps\to 0}\left(\F_\eps(\Q_\eps^\star, \, \M_\eps^\star; \, D_\sigma) - 2\pi\abs{d}\abs{\log\eps}\right) \geq \pi\log\sigma + 2\abs{d}\gamma_* - C\sigma,
 \end{equation}
 where~$C$ is a constant that does not depend on~$\eps$, $\sigma$.
 On the other hand, we have
 \begin{equation} \label{gammalininf-outofcore}
  {\liminf_{\eps\to 0}
  \F_\eps(\Q_\eps^\star, \, \M_\eps^\star; \, \Omega\setminus D_\sigma)
  \geq \frac{1}{2} \int_{\Omega\setminus D_\sigma}
   \abs{\nabla\Q^\star}^2 \, \d x
   + c_\beta \H^1\left(\J_{\M^\star}\cap (\Omega\setminus D_\sigma)\right)}
 \end{equation}
 {exactly as in~\cite[Equation~(4.81)]{CanevariMajumdarStroffoliniWang}.}
 Combining~\eqref{gammalininf-core} with~\eqref{gammalininf-outofcore},
 recalling the definition~\eqref{W}
 of~$\mathbb{W}(a^\star_1, \, \ldots, \, a^\star_{2\abs{d}})$,
 {and passing to the limit as~$\sigma\to 0$,} we obtain
 \begin{equation} \label{mainmin6}
  \begin{split}
   \liminf_{\eps\to 0}
    \big(\mathscr{F}_\eps(\Q^\star_\eps, \, \M^\star_\eps) -
    2\pi\abs{d}\abs{\log\eps} \big)
  \geq \mathbb{W}(a^\star_1, \, \ldots, \, a^\star_{2\abs{d}})
    + c_\beta \H^1(\J_{\M^\star})
    + 2\abs{d}\gamma_*.
  \end{split}
 \end{equation}
 By comparing~\eqref{mainmin6} with the upper bound~\eqref{mainmin1},
 and keeping Theorem~\ref{th:minconnrel} into account,
 we obtain
 \[
  \mathbb{W}^\Omega_\beta\left(a^\star_1, \, \ldots, \, a^\star_{2\abs{d}}\right)
  \leq \mathbb{W}\left(a^\star_1, \, \ldots, \, a^\star_{2\abs{d}}\right)
    + c_\beta \H^1\left(\J_{\M^\star}\right)
  \leq \mathbb{W}^\Omega_\beta\left(a_1, \, \ldots, \, a_{2\abs{d}}\right)
 \]
 for any distinct points~$a_1$, \ldots, $a_{2\abs{d}}$ in~$\Omega$. This proves that
 $(a^\star_1, \, \ldots, \, a^\star_{2\abs{d}})$
 is a minimiser of~$\mathbb{W}^\Omega_\beta$ and, moreover,
 $\H^1(\J_{\M_*}) = \mathbb{L}^\Omega(a_1^\star, \, \ldots, \, a^\star_{2\abs{d}})$.
 Then, Theorem~\ref{th:minconnrel}
 implies that~$\J_{\M_*}$ is carried by a
 minimal connection for~$\{a_1^\star, \, \ldots, \, a^\star_{2\abs{d}}\}$
 relative to~$\Omega$.
 This completes the proof of the theorem.
\end{proof}

\paragraph{Acknowledgments}
The authors are members of GNAMPA-INdAM. Their work has been
partially supported by GNAMPA projects CUP\_E53C22001930001{,} CUP\_E53C23001670001{,}
and CUP\_E53C25002010001.
B.S.'s research
is part of the project ``Geometric Evolution Problems and Shape
Optimizations'', PRIN  Project 2022E9CF89 and  FRA Project-B “VarMoCry”, Variational analysis and modeling of Liquid Crystals, funded by  the University of Naples Federico II.
G.C. is part of the ANR project ``Singularities of energy-minimizing vector-valued maps'', ref. ANR-22-CE40-0006.
Part of the reserach that lead to the present work was carried
out while the authors participated in the summer school
``Variational and PDE Methods in Nonlinear Science'', organised by
the C.I.M.E. Foundaton in Cetraro (Italy), July~2023, and in the workshop
``24w5249 --- Mathematical Analysis of Soft Matter'',
organised by the Banff International Research Station
in Banff (Canada) in July 2024.
F.L.D. would like to thank the University of Verona
for hospitality during the last part of this work.
Likewise, G.C. is grateful to the University of Napoli Federico~II
for hospitality during several visits.

\bibliographystyle{plain}
\bibliography{UnifConv}

\begin{thebibliography}{10}

\bibitem{AlicandroPonsiglione}
R.~Alicandro and M.~Ponsiglione.
\newblock Ginzburg-{L}andau functionals and renormalized energy: a revised
  {$\Gamma$}-convergence approach.
\newblock {\em J. Funct. Anal.}, 266(8):4890--4907, 2014.

\bibitem{Ambrosio-metricBV}
L.~Ambrosio.
\newblock Metric space valued functions of bounded variation.
\newblock {\em Ann. Sc. norm. super. Pisa - Cl. sci}, Ser. 4, 17(3):439--478,
  1990.

\bibitem{ACMM}
L.~Ambrosio, V.~Caselles, S.~Masnou, and J.M. Morel.
\newblock Connected components of sets of finite perimeter and applications to
  image processing.
\newblock {\em J. Eur. Math. Soc.}, 3(1):39--92, 2001.

\bibitem{AmbrosioFuscoPallara}
L.~Ambrosio, N.~Fusco, and D.~Pallara.
\newblock {\em Functions of bounded variation and free discontinuity problems}.
\newblock Oxford Mathematical Monographs. The Clarendon Press, Oxford
  University Press, New York, 2000.

\bibitem{BadalCicalese}
R.~Badal and M.~Cicalese.
\newblock Renormalized energy between fractional vortices with topologically
  induced free discontinuities on 2-dimensional riemannian manifolds.
\newblock {\em Calc. Var. Partial Differential Equations}, 64:83, 2025.

\bibitem{Badal_et_al}
R.~Badal, M.~Cicalese, L.~De~Luca, and M.~Ponsiglione.
\newblock {$\Gamma$}-convergence analysis of a generalized $xy$ model:
  Fractional vortices and string defects.
\newblock {\em Comm. Math. Phys.}, 358(2):705--739, 2018.

\bibitem{Baldo}
S.~Baldo.
\newblock Minimal interface criterion for phase transitions in mixtures of
  {C}ahn-{H}illiard fluids.
\newblock {\em Ann. Inst. H. Poincar\'{e} C Anal. Non Lin\'{e}aire},
  7(2):67--90, 1990.

\bibitem{BallZarnescu}
J.~Ball and A.~Zarnescu.
\newblock Orientability and energy minimization in liquid crystal models.
\newblock {\em Arch. Ration. Mech. Anal.}, 202(2):493--535, 2011.

\bibitem{Bedford}
S.~Bedford.
\newblock Function spaces for liquid crystals.
\newblock {\em Arch. Ration. Mech. Anal.}, 219(2):937--984, 2016.

\bibitem{BellettiniMarzianiScala}
G.~Bellettini, R.~Marziani, and R.~Scala.
\newblock On jump minimizing liftings for $\mathbb{S}^1$‑valued maps and
  connections with {A}mbrosio‑{T}ortorelli‑type {$\Gamma$}‑limits.
\newblock Preprint, arXiv:2505.08731, 2025.

\bibitem{BBH0}
F.~Bethuel, H.~Brezis, and F.~H{\'e}lein.
\newblock Asymptotics for the minimization of a {G}inzburg-{L}andau functional.
\newblock {\em Calc. Var. Partial Differential Equations}, 1(2):123--148, 1993.

\bibitem{BBH}
F.~Bethuel, H.~Brezis, and F.~H\'{e}lein.
\newblock {\em Ginzburg-{L}andau vortices}, volume~13 of {\em Progress in
  Nonlinear Differential Equations and their Applications}.
\newblock Birkh\"{a}user Boston, Inc., Boston, MA, 1994.

\bibitem{BethuelChiron}
F.~Bethuel and D.~Chiron.
\newblock Some questions related to the lifting problem in {S}obolev spaces.
\newblock In {\em Perspectives in nonlinear partial differential equations},
  volume 446 of {\em Contemp. Math.}, pages 125--152. Amer. Math. Soc.,
  Providence, RI, 2007.

\bibitem{BethuelDemengel}
F.~Bethuel and F.~Demengel.
\newblock Extensions for {S}obolev mappings between manifolds.
\newblock {\em Calc. Var. Partial Differential Equations}, 3(4):475--491, 1995.

\bibitem{BethuelZheng}
F.~Bethuel and X.M. Zheng.
\newblock Density of smooth functions between two manifolds in {S}obolev
  spaces.
\newblock {\em J. Funct. Anal.}, 80(1):60--75, 1988.

\bibitem{bisht2019}
K.~Bisht, V.~Banerjee, P.~Milewski, and A.~Majumdar.
\newblock Magnetic nanoparticles in a nematic channel: A one-dimensional study.
\newblock {\em Phys- Rev. E}, 100(1):012703, 2019.

\bibitem{BourgainBrezisMironescu2005}
J.~Bourgain, H.~Brezis, and P.~Mironescu.
\newblock Lifting, degree, and distributional jacobian revisited.
\newblock {\em Comm. Pure Appl. Math.}, 58(4):529--551, 2005.

\bibitem{BrezisCoronLieb}
H.~Brezis, J.M. Coron, and E.H. Lieb.
\newblock Harmonic maps with defects.
\newblock {\em Comm. Math. Phys.}, 107(4):649--705, 1986.

\bibitem{BrezisMironescu}
H.~Brezis and P.~Mironescu.
\newblock {\em Sobolev maps to the circle---from the perspective of analysis,
  geometry, and topology}, volume~96 of {\em Progress in Nonlinear Differential
  Equations and their Applications}.
\newblock Birkh\"{a}user/Springer, New York, [2021] \copyright 2021.

\bibitem{Brochard}
F.~Brochard and P.G. de~Gennes.
\newblock Theory of magnetic suspensions in liquid crystals.
\newblock {\em J. Phys.}, 31(7):691--708, 1970.

\bibitem{Caldereretal}
M.C. Calderer, A.~DeSimone, D.~Golovaty, and A.~Panchenko.
\newblock An effective model for nematic liquid crystal composites with
  ferromagnetic inclusions.
\newblock {\em SIAM J. Appl. Math.}, 74(2):237--262, 2014.

\bibitem{CDS}
G.~Canevari, F.~L. Dipasquale, and B.~Stroffolini.
\newblock The formation of gradient-driven singular structures of codimension
  one and two in two-dimensions: The case study of ferronematics.
\newblock Preprint, arXiv:2505.07506, 2025.

\bibitem{CanevariMajumdarStroffoliniWang}
G.~Canevari, A.~Majumdar, B.~Stroffolini, and Y.~Wang.
\newblock Two-dimensional ferronematics, canonical harmonic maps and minimal
  connections.
\newblock {\em Arch. Ration. Mech. Anal.}, 247(6):Paper No. 110, 61, 2023.

\bibitem{CO-lifting}
G.~Canevari and G.~Orlandi.
\newblock Lifting for manifold-valued maps of bounded variation.
\newblock {\em J. Funct. Anal.}, 278(10):108453, 2020.

\bibitem{ContiCrismaleGarroniMalusa}
S.~Conti, V.~Crismale, A.~Garroni, and A.~Malusa.
\newblock Phase-field approximation of sharp-interface energies accounting for
  lattice symmetry.
\newblock Preprint, arXiv:2601.07497, 2026.

\bibitem{DavilaIgnat}
J.~D\'avila and R.~Ignat.
\newblock Lifting of {BV} functions with values in ${S}^1$.
\newblock {\em C.R. Math. Acad. Sci. Paris}, 337(3):159--164, 2003.

\bibitem{DelPinoFelmer}
M.~del Pino and P.L. Felmer.
\newblock Local minimizers for the {G}inzburg-{L}andau energy.
\newblock {\em Math. Z.}, 225(4):671--684, 1997.

\bibitem{Federer}
H.~Federer.
\newblock {\em Geometric measure theory}.
\newblock Die Grundlehren der mathematischen Wissenschaften, Band 153.
  Springer-Verlag New York, Inc., New York, 1969.

\bibitem{FonsecaTartar}
I.~Fonseca and L.~Tartar.
\newblock The gradient theory of phase transitions for systems with two
  potential wells.
\newblock {\em Proc. Roy. Soc. Edinburgh Sect. A}, 111(1-2):89--102, 1989.

\bibitem{GiaquintaModicaSoucek}
M.~Giaquinta, G.~Modica, and J.~Sou{\v{c}}ek.
\newblock {\em Cartesian currents in the calculus of variations.}, volume~37 of
  {\em Ergebnisse der Mathematik und ihrer Grenzgebiete. 3. Folge. A Series of
  Modern Surveys in Mathematics [Results in Mathematics and Related Areas. 3rd
  Series. A Series of Modern Surveys in Mathematics]}.
\newblock Springer-Verlag, Berlin, 1998.
\newblock Cartesian currents.

\bibitem{GoldmanMerletMillot}
M.~Goldman, B.~Merlet, and V.~Millot.
\newblock A {Ginzburg-Landau} model with topologically induced free
  discontinuities.
\newblock {\em Annales de l'Institut Fourier}, 70(6):2583--2675, 2020.

\bibitem{Hatcher}
A.~Hatcher.
\newblock {\em {A}lgebraic {T}opology}.
\newblock Cambridge University Press, Cambridge, 2002.

\bibitem{IgnatLamy}
R.~Ignat and X.~Lamy.
\newblock Lifting of $\mathbb{{RP}}^{d-1}$-valued maps in {BV} and applications
  to uniaxial {Q}-tensors. with an appendix on an intrinsic {BV}-energy for
  manifold-valued maps.
\newblock {\em Calc. Var. Partial Differential Equations}, 58(2):68, Mar 2019.

\bibitem{Jerrard}
R.L. Jerrard.
\newblock Lower bounds for generalized {G}inzburg-{L}andau functionals.
\newblock {\em SIAM J. Math. Anal.}, 30(4):721--746, 1999.

\bibitem{Lin96}
F.H. Lin.
\newblock Some dynamical properties of {G}inzburg-{L}andau vortices.
\newblock {\em Comm. Pure Appl. Math.}, 49(4):323--359, 1996.

\bibitem{Lin99}
F.H. Lin.
\newblock Vortex dynamics for the nonlinear wave equation.
\newblock {\em Comm. Pure Appl. Math.}, 52(6):737--761, 1999.

\bibitem{Merlet}
B.~Merlet.
\newblock Two remarks on liftings of maps with values into $\mathbb{S}^1$.
\newblock {\em C.R. Math. Acad. Sci. Paris}, 343(7):467--472, 2006.

\bibitem{MLDC}
A.~Mertelj, D.~Lisjak, M.~Drofenik, and M.~Copic.
\newblock Ferromagnetism in suspensions of magnetic platelets in liquid
  crystal.
\newblock {\em Nature}, 504:237–241, 2013.

\bibitem{MironescuRussSire}
P.~Mironescu, E.~Russ, and Y.~Sire.
\newblock Lifting in {B}esov spaces.
\newblock {\em Nonlinear Anal.}, 2019.

\bibitem{MironescuVanSchaftingen}
P.~Mironescu and J.~Van~Schaftingen.
\newblock Lifting in compact covering spaces for fractional {S}obolev mappings.
\newblock {\em Anal. PDE}, 14(6):1851--1871, 2021.

\bibitem{ModicaMortola}
L.~Modica and S.~Mortola.
\newblock Un esempio di {$\Gamma \sp{-}$}-convergenza.
\newblock {\em Boll. Un. Mat. Ital. B (5)}, 14(1):285--299, 1977.

\bibitem{Mucci-DCDS}
D.~Mucci.
\newblock Maps into projective spaces: liquid crystal and conformal energies.
\newblock {\em Discrete Cont. Dyn.-B}, 17(2):597--635, 2012.

\bibitem{Sandier}
\'{E}. Sandier.
\newblock Lower bounds for the energy of unit vector fields and applications.
\newblock {\em J. Funct. Anal.}, 152(2):379--403, 1998.

\bibitem{SchoenUhlenbeck2}
R.~Schoen and K.~Uhlenbeck.
\newblock Boundary regularity and the {D}irichlet problem for harmonic maps.
\newblock {\em J. Differential Geom.}, 18(2):253--268, 1983.

\bibitem{Wilson-Graph}
R.~J. Wilson.
\newblock {\em Introduction to Graph Theory}.
\newblock Pearson Education/Prentice Hall, Harlow, England, 5th edition, 2010.

\end{thebibliography}

\begin{flushright}
\Addresses
\end{flushright}

\end{document}